\theoremstyle{plain}
\theoremstyle{definition}
\newtheorem{theorem}{Theorem}[section]
\newtheorem{corollary}{Corollary}[theorem]
\newtheorem{lemma}[theorem]{Lemma}
\newtheorem{proposition}[theorem]{Proposition}
\newcommand{\tr}{{}^\top\!}
\newcommand{\bu}{\bullet}
\newcommand{\R}{\mathbb{R}}
\newcommand{\N}{\mathbb{N}}
\newcommand{\E}{\mathbf{E}}
\title{Functional Laplace Transform of a Multivariate Hawkes Process, Subsequent Characteristics, and Numerical Approximations}
\author{ Bartholomé Vieille\footnote{INRAE, BioSP, 84914 Avignon, France}  \footnote{\url{bartholv@gmail.com}} \and Rachid Senoussi\footnotemark[1] \footnote{\url{rachid.senoussi@inrae.fr}} \and Samuel Soubeyrand\footnotemark[1] \footnote{\url{samuel.soubeyrand@inrae.fr}} \footnote{\url{https://orcid.org/0000-0003-2447-3067}}}
\date{\today}
\begin{document}

\maketitle

\paragraph{Abstract.}
Numerous studies grounded on Hawkes processes have been carried out in many fields including finance, biology and social network. Hawkes processes form a class of self-exciting simple point processes. In this article, we consider a general class of multivariate Hawkes processes envisioned to model dynamics of spatio-temporal epidemics. For this class, the igniting baseline intensity is time dependent and the exciting matrix function is a general one, making the model non-Markovian in most of the cases. 
In this article, we first provide the closed-form expression of the multivariate multi-temporal characteristic function of these Hawkes processes, extending in a natural way the classical single-time formula found in the Hawkes literature. Then, we use the infinitely divisible property of the Hawkes process to derive the equation system related to the probability distribution of counts at each single time, adapted to the general formulation of the Hawkes model considered in this article. Next, we provide closed-form formulas for the temporal structure of the two first moments of the process, which allows us to deduce an original expression of the multivariate covariance function at two distinct times, thereby extending existing results established for more restricted classes of Hawkes processes. Based on this expression, we analytically decompose the covariance at two distinct times into  singular and continuous parts.
We finish with brief numerical elements: We present a simple scheme for numerical approximations of the Laplace transform and the first two moments, and give examples of solutions of the different related integral equations. We also provides illustrative simulations of the multivariate Hawkes process for different model specifications.

\paragraph{Keywords.} Hawkes process; Laplace transform; self-exciting process; spatio-temporal process; infinite divisibility property; covariance structure.

\paragraph{MSC2020 subject classifications.} 60G55; 60E10.

\newpage
	
\section{Introduction}

A univariate  simple counting process $N$ is defined by:

\begin{equation*}
    N(t)=\sum_{l=1}^{+\infty} \mathbf 1_{\{T_l\le t\}},
\end{equation*}

\noindent where $T_1<\ldots<T_l<\ldots$ is a sequence of positive increasing random variables, and $\mathbf 1_E$ equals 1 if event $E$ is true and 0 otherwise. 

As usual, to describe the stochastic structure and the dynamics of $N$, we consider a complete probability space $(\Omega,\mathcal F,\mathbf P)$ with the  complete right-continuous natural filtration 
$\mathcal F=(\mathcal F_t)_{t \geq 0}$, with $\mathcal F_t=\sigma(N(s), s\leq t)$ . 

$(N(t))_{t\geq 0}$ is said a \textit{Hawkes} process driven by $(\lambda^0,\phi)$ if its $ (\mathcal{F}-\mathbf P)$ previsible compensator admits an intensity $\tilde{\lambda}^0(t)$   of the following form:

\begin{equation}
\label{1D-intensity}
\tilde{\lambda}^0(t) =\lambda^0(t) + \sum_{l=1}^{+\infty} \phi(t-T_l)\; \mathbf 1_{\{T_l\le t\}}.
\end{equation}

\noindent where $\lambda^0$, called the baseline intensity, and the function $\phi$, called the excitation function, are deterministic non-negative real functions defined on $\mathbb R_+$. This process is said self-exciting since each event $T_l$ induces a new point process with baseline intensity function $\phi$.

Hawkes processes were introduced by Alan G. Hawkes \cite{hawkes1971spectra}. They are a generalization of Poisson processes, wherein multiple Poisson processes are added over time. Hawkes processes have been used in various fields such as finance \cite{bacry2015hawkes,hawkes2018hawkes}, genomics \cite{reynaud2010adaptive}, epidemiology \cite{rizoiu2018sir}, insurance \cite{SWISHCHUK2021107}, neuroscience \cite{Gerencser,lambert2018reconstructing}, and seismology \cite{BENALLAL_RM2020}.

From the intensity expression \eqref{1D-intensity}, it is evident that Hawkes processes are usually non-Markovian and therefore challenging to investigate. Consequently, a significant portion of research on Hawkes processes focuses on specific models. For instance, extensive studies have been conducted on the case where the baseline intensity $\lambda^0$ is constant and the exciting function $\phi$ is exponential, in which case the analysis simplifies into the analysis of a two-dimensional Markovian process $(\tilde{\lambda}^0, N)$; see \cite{gao2018functional,hawkes1971spectra,oakes1975markovian,seol2019limit}.

The integral formula for the probability generating function restricted to a single time for the one-dimensional case was provided by \cite{hawkes1974cluster}. This result was further extended to Hawkes processes with random jump sizes, as established by \cite{gao2018transform} who derived the expression for the Laplace Transform for the one-dimensional case. Moreover, \cite{errais2010affine} investigated the Laplace and Fourier transforms and the distribution of a Hawkes process in a specific case where the exciting function is exponential and with a particular baseline intensity. Furthermore, using martingale methods, \citep{jaisson2014} provided second-, third- and fourth-order moments at multiple times for one-dimensional Hawkes processes with constant baseline intensity, these results being used in the study of weak convergence of Hawkes processes toward Cox–Ingersoll–Ross models \citep{jaisson2015}; See also \citep{hillairet2023}, who employed the Poisson embedding representation, Malliavin calculus, and the pseudo-chaotic expansion of the Hawkes process to derive expressions for second-order moments in the one-dimensional case with constant baseline intensity, as above.

A multivariate version of a Hawkes process was proposed by \cite{daley2003introduction}, and referred to as a marked temporal process $N$ with a finite mark set denoted by $\{1,...,d\}$. Subsequently, \cite{el2019characteristic} obtained an integral formula for the characteristic function of the multivariate case, limited to a single time $t$; see also \cite{karim2021}. The extension of these formulas to multiple times is crucial for statistical inference if data include multiple observation times and for identifying process limits. Precisely, using a population representation, \citep{jovanovic2015} obtained simple expressions of second- and third-order moments at multiple times for multivariate Hawkes processes that are stationary over $\R$, i.e., grounded on a constant baseline intensity and a short-time dependence assumption on the excitation function. \citep{privault2021} provided implicit forms of functional moments generalizing the results of \citep{jovanovic2015} to stationary spatial Hawkes processes. These forms are derived from the expression of the probability generating functional (Proposition 3.1 in \citep{privault2021}), which is analogous to our Theorem \ref{thrm:2} obtained in the non-stationary case. 

In this study, we consider a more general class of non-stationary, multidimensional Hawkes processes, characterized by a flexible and relatively broad excitation function. We characterize the multivariate multi-temporal Laplace transform and moments using integral equations. To achieve this, we use the immigrant-birth representation of a linear Hawkes Process, as described in \cite{hawkes1974cluster} and \cite{karabash2015limit} for marked Hawkes processes. This result not only characterizes the process but also provides intrinsic information about its behavior over time.





In Section \ref{matmet}, we present a Volterra-like system of equations expressing the multi-temporal characteristic function of our multivariate Hawkes process. A similar expression for the functional Laplace Transform of the process is also given, extending classical single-time expressions. 
In Section \ref{section_inf_div}, we detail the precise form of the infinitely divisible property for our Hawkes process and the integral equations related to the probability distribution  of the Hawkes process at any time. 
Section \ref{section_int_equation_first_moments} is dedicated to expressing and calculating the first two moments of the process as well as the form of certain statistical functionals related to the general theory of spatial statistics. We obtain an original expression for the covariance function at two distinct times. Then, we propose in Section \ref{section-approximation}  some approaches to tackle the numerical aspects for the resolution of the system of Volterra-like equations mentioned above. We illustrate these numerical approaches for a specific Hawkes model, focusing on the mean and covariance functions. Section \ref{sec:simul} gives a few simulated examples of the multi-dimensional Hawkes process to illustrate the effects of the excitation function and the interaction between the different dimensions (i.e., regions in a spatial setting). To conclude, Section \ref{conclusion} outlines the general framework of this work and mentions some avenues of  potential applications for the results that we obtained, especially with regard to the development of statistical inference methods.

\subsubsection*{Notations.}	
First, we convene that $j\in [r:k]$ holds for the  enumeration $j=r,\cdots,k$.
In accordance with matrix calculus, a $d$-vector $w$ is always considered as a column vector while its transpose is the row vector $\tr w=(w_1,\ldots,w_d)$,   and for a $d\times n$-matrix $m=(m_i^j: i= 1:d, j = 1: n)$,  its $j^{th}$ column is  denoted $m_\bu^j$ and its $i^{th}$ row as $m_i^\bu $. We also set $\overline{w}=\sum_{i=1}^d w_i$ and  adopt the useful notation of restriction $u_{[r:k]}=(u_r,\ldots,u_k)$ for an $n$-row vector $u$ as well as  $m_\bu^{[r:k]}= (m_i^j:  1\leq i\leq d, r\leq j\leq k)$ for matrix $m$.  We also convene that a $k$-vector  having identical components, typically a scalar $u$, is written $u_{[k]}$, e.g. $0_{[d]}$ denotes the vector of zeros with dimension $d$.  The left limit of a function $f$  is defined by $f^-(t)= \lim_{s\uparrow t} f(s)$ if it exists (as this is the case for non decreasing functions), and its $\tau-$translate is defined by $f_{\tau_+}(s)=f(\tau+ s)$.   Multiple summation over $q=(q_1,\ldots,q_d) \in \mathbb{N}^d$ is simply written $\sum_q$.  The unit vectors of $\R^d$ are denoted $e_j$, $j=1:d$.

\bigskip
\section{Multivariate Hawkes Process}  \label{matmet}

A $d$-dimensional Hawkes process driven by $(\lambda^0, \phi)$ is a simple point process with $d$ marks
$$ N(t) =\tr(N_1,\ldots,N_d) (t)$$
described by the collection of the occurrence times $\{T^j_l: j=1:d, l \geq 1\}$
(indicating that an $l^{th}$ event of type $j$ occurs at $T^j_l$) and 
such that considering its natural filtration, $N$ has a predictable compensator 
$\tilde{\Lambda}^0(t)= \tr(\tilde{\Lambda}^0_1,\ldots, \tilde{\Lambda}^0_d)(t)$ admitting a conditional intensity function $\tilde{\lambda}^0(t)= \tr(\tilde{\lambda}^0_1,\ldots, \tilde{\lambda}^0_d)(t)$  of the following form:  
\begin{equation}\label{intensity2-1} 
	\tilde{\lambda}^0_i(t)= \lambda_i^0(t) +
	\sum_{j=1}^d \sum_{l: T^j_l <t} \phi^j_i (t-T^j_l), 
\end{equation}
where $\lambda^0 : \mathbb R_+ \rightarrow (\mathbb R_+)^d$ and $\phi=(\phi^j_i) : \mathbb R_+ \rightarrow \mathcal M_d^+$, being  the set of $d\times d$ matrices with non-negative coefficients. 

This can be rewritten in integral-matrix forms as: 
\begin{equation}\label{intensity2-2} 
\tilde{\lambda}^0(t)= \lambda^0(t) +
    \int_0^{t-} \phi(t-u)\,\mathrm d   N(u).
\end{equation}

$N$ can represent various counting contexts involving interacting entities, such as populations (cities, countries, animal species, cells), finance and economic activities, or physical fundamental particles, types, or locations of earthquakes, etc. For the sake of clarity, in epidemiology, we assume that it describes the evolution of an infection in a spatial domain composed of $d$ distinct regions. Therefore, $N_i(t)$ represents the number of infected individuals in region $i$ at time $t$.

\

Equation \eqref{intensity2-1} describes the dynamics of the process, indicating the presence of an initial unknown ancestor (labeled as $j=0$) that triggers a $d$-dimensional Poisson process $N^0=\tr(N^0_1,\ldots,N^0_d)$ with independent components and conditional intensity $\lambda^0=\tr(\lambda^0_1,\ldots,\lambda^0_d) $. This process is described by occurrence times $T^{0,j}_l,$ where $ j=1:d$ and $l\geq 1$,  representing individuals of the first generation.
Each individual of the first generation of type $j$, occurring at time $T^{0,j}_l$, in turn, triggers a new $d$-dimensional Poisson process with conditional intensity $\phi^j_\bu=\tr(\phi^j_1,\cdots,\phi^j_d)$, resulting in a second generation of individuals ($l'\geq 1)$ of type $j'$ at times 
$(T^{1,j,j'}_{l,l'}: j'=1:d, l'\geq 1)$. This process continues recursively, generating individuals of the third generation and so on. 
The occurrence times $\{ T^{m,j,j'}_{l,l'}: \  j,j'\in 1:d; m\geq 0; l,l'\geq 1\}$ when ordered increasingly in a unique sequence $\{T^j_n: j=1,\ldots, d; n \geq 1\}$, actually form the Hawkes process $N$. 
\bigskip

\paragraph{Remark and Notation.} It is well known for multivariate point processes, see \cite{jacod1} for example, that when  considering the natural filtration, the knowledge of the conditional intensity, parameterized here by $(\lambda^0, \phi)$, characterizes entirely the probability distribution of the process itself. Accordingly, when clarification is needed, we enhance this by using subscripts, eg  $\mathbf{P}_{(\lambda^0,\phi)}, \mathbf{E}_{(\lambda^0,\phi)}$.  Also, by replacing  $\lambda^0$  with $\phi^{j'}_\bu,  j'=1:d$, we can define $d$-new Hawkes processes $\tilde{N}^{j'}$, each  directed by $(\phi^{j'}_\bu, \phi)$. These   processes will play an important role in the sequel. Therefore convening that  $\phi^0_\bu=\lambda^0$ enables us to  identify $N$ with $\tilde{N}^0$ and then to group together many formulas.

\subsection{Population Representation of Multivariate Hawkes Processes}

A useful representation of $N$ for analytic investigation is 
the so called {\it population approach} adopted in previous works like \cite{hawkes1974cluster}  and \cite{el2019characteristic}. We mainly use here  their  notations  to  describe a Hawkes process $\tilde{N}^0 (t)$, directed by $(\lambda^0,  \phi)$,  as the sum of an  ancestor Poisson process  $N^0$ and of all Hawkes processes ignited by each first generation individual. That is to say, at each event  $T^{0,j}_l $, a new Hawkes process  
$\tilde{N}^{j,(l)} $ directed by $(\phi^j_\bu , \phi)$ starts (independently of index l), such that we have the representation
\begin{equation}\label{rep}
	\tilde{N}^0(t)= N^0(t) + \sum_{j=1}^d \sum_{l=1}^{N^0_j(t)} \tilde{N}^{j,(l)}(t-T^{0,j}_l).
\end{equation}

\subsubsection*{Important remarks for the sequel}

\begin{enumerate}
\item Actually from a point of view of filtration, the representation \eqref{rep} is much richer than the proper filtration of $\tilde{N}^0$, since we cannot recover actually the ascendant of individuals (i.e., retrieve the $T^{m,j}_l$ from the knowledge of the $T^j_n$ ).

\item Conditionally to the knowledge of the sequence $T^{m,j}_l$, the Hawkes processes are independent of each others. Moreover for each index $j$, the $\tilde{N}^{j,(l)} $ are  identically distributed, say as a reference Hawkes point process $\tilde{N}^{j}$.

\item Note that the sum of independent   Hawkes processes $\tilde{N}^{0,(l)}, l=1,\ldots,L$, each directed by $(\lambda^{0,(l)}, \phi)$, is also a Hawkes process directed  by $(\sum_{l=1}^L\lambda^{0,(l)}, \phi)$. Consequently, taking $\lambda^{0,(l)}=\lambda^0/L$,  proves that Hawkes processes have infinitely divisible  probability distributions. 

Similarly, any subdivision of $\mathbb{R}_+= \cup_{l=1}^L [t_{l-1},t_l[$ shows that $\tilde{N}^0$ directed by $(\lambda^0, \phi)$ is the sum of $L$ independent copies of Hawkes processes directed by $(\lambda^0\chi_{[t_{l-1},t_l[}, \phi ),   l=1,\ldots,L$, where $\chi_{ [t_{l-1}, t_m [ }$ is the indicator function that gives 1 if its argument is in $[t_{l-1}, t_l [$ and 0 otherwise, that is $\chi_{ [t_{l-1}, t_l[ }(t)=\mathbf 1_{t\in [t_{l-1}, t_l [}$.
\end{enumerate}

\subsubsection*{Probabilities of Zero and One Events}

As a consequence of the population representation \eqref{rep}, we have the following  result:
\begin{lemma}
	For any $t \ge 0$ and $j'=0:d$, we have for $0_{[d]} = (0,\ldots,0) \in \mathbb R^d$
\begin{eqnarray}
	\tilde{p}^{j'}_0(t)=P(\tilde{N}^{j'}(t)=0_{[d]} )&=&\exp \left(-\sum_{j=1}^d \int_0^t \phi^{j'}_j(s) \mathrm ds\right)\\
\tilde{p}^{j'}_{e_i}(t)=P(\tilde{N}^{j'}(t)= e_i)&=&\left(\int_0^t \phi^{j'}_i(s) \mathrm ds\right)\exp \left(-\sum_{j=1}^d \int_0^t \phi^{j'}_j(s) \mathrm ds\right).	
\end{eqnarray}
\end{lemma}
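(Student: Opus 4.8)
The plan is to read both identities off the immigrant-birth representation \eqref{rep} specialized to $\tilde N^{j'}$, for which the ancestor process $N^{j'}$ is a $d$-dimensional Poisson process with independent inhomogeneous components of intensities $\phi^{j'}_1,\dots,\phi^{j'}_d$, and each first-generation point of type $j$ ignites an independent copy of the sub-Hawkes process $\tilde N^{j}$ directed by $(\phi^{j}_\bu,\phi)$.

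First I would establish the void identity. Since every first-generation point is itself counted in $\tilde N^{j'}(t)$, and every descendant contributes a nonnegative amount to the total count, one has $\tilde N^{j'}(t)=0_{[d]}$ exactly when the ancestor process does not fire, i.e. $\{\tilde N^{j'}(t)=0_{[d]}\}=\{N^{j'}(t)=0_{[d]}\}$; indeed, if $N^{j'}(t)=0_{[d]}$ the spawning sums in \eqref{rep} are empty. Because the $d$ components of $N^{j'}$ are independent inhomogeneous Poisson processes, the void probability factorizes as $\prod_{j=1}^d \exp\bigl(-\int_0^t \phi^{j'}_j(s)\,\mathrm{d}s\bigr)$, which is the asserted expression.

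Next I would decompose the event $\{\tilde N^{j'}(t)=e_i\}$. A total count of exactly one forces the ancestor $N^{j'}$ to fire exactly once on $[0,t]$, with its lone point of type $i$ at some time $\tau$, and the sub-Hawkes process $\tilde N^{i}$ it ignites to produce no point on $[0,t-\tau]$. Conditioning on $N^{j'}$ and using the conditional independence of the spawned processes noted after \eqref{rep}, the probability that the ancestor produces exactly one point, of type $i$, in $\mathrm{d}\tau$ and none elsewhere on $[0,t]$ is $\phi^{j'}_i(\tau)\exp\bigl(-\sum_{j=1}^d\int_0^t\phi^{j'}_j(s)\,\mathrm{d}s\bigr)\,\mathrm{d}\tau$, by the one-point/void formula for the inhomogeneous Poisson process $N^{j'}$. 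Integrating over $\tau\in[0,t]$ produces the prefactor $\int_0^t\phi^{j'}_i(s)\,\mathrm{d}s$ multiplying the common exponential, matching the statement.

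The step I expect to be the main obstacle is the accounting for the descendants of this unique ancestor point: one must control the factor $\tilde p^{i}_0(t-\tau)$ that the first identity attaches to the requirement $\tilde N^{i}(t-\tau)=0_{[d]}$, and show how it enters the displayed prefactor. Pinning down this bookkeeping, and checking it is consistent with the exact single-time count, is the crux; by contrast the factorization over the $d$ independent ancestor components and the Poisson void/one-point computations are routine.
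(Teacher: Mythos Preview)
Your argument for the void probability is correct and coincides with the paper's: since every point of $\tilde N^{j'}$ is either an ancestor (a point of $N^{j'}$) or a descendant of one, the event $\{\tilde N^{j'}(t)=0_{[d]}\}$ equals $\{N^{j'}(t)=0_{[d]}\}$, and the Poisson void formula finishes.

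For the second identity your suspicion is exactly right, and the obstacle you flag is not a gap in your reasoning but an error in the statement. The paper's proof asserts that $\{\tilde N^{j'}(t)=e_i\}$ ``corresponds exactly'' to the Poissonian event $\{N^{j'}(t)=e_i\}$; however, this gives only one inclusion. If $N^{j'}(t)=e_i$ with the sole ancestor at time $\tau$, one still needs the spawned process to satisfy $\tilde N^{i}(t-\tau)=0_{[d]}$, which has probability $\tilde p^i_0(t-\tau)$ and is not identically~$1$. Your conditioning therefore yields the correct formula
\[
\tilde p^{j'}_{e_i}(t)=\exp\Bigl(-\sum_{j=1}^d\int_0^t\phi^{j'}_j(s)\,\mathrm ds\Bigr)\int_0^t \phi^{j'}_i(\tau)\,\tilde p^i_0(t-\tau)\,\mathrm d\tau,
\]
which reduces to the Lemma's displayed expression only when $\phi^i_\bu\equiv 0$. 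This is confirmed independently by the paper's own Proposition~\ref{counts-prob}: specializing \eqref{prob-recur} to $l=e_i$ makes the $r\ge2$ sum empty and forces $j=i$ on the right, giving precisely the integral above rather than the bare prefactor $\int_0^t\phi^{j'}_i(s)\,\mathrm ds$. So your bookkeeping is the correct one; the Lemma's second line is misstated.
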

\begin{proof}
For $j'=0$,	 Equation \eqref{rep} tells us that the   events  $``\tilde{N}^0_i(t)=0"$ (resp.  $``\tilde{N}^0_i(t)=1"$ ) correspond exactly to  the independent Poissonian events $``N^0_i(t)=0"$ (resp. $``N^0_i(t)=0"$) directed by $\phi^0_i=\lambda^0_i$, for all $i=1:d$. 
	Clearly, this also holds true for $j'= 1:d$.	
\end{proof}

\subsection{Multi-Time Functional Laplace Transform of a Multivariate Hawkes Process}
\label{section1}

Let  $a= \left(a^j_i\right)$ be a $d\times n$  matrix with non negative values and   $ t=(t_1,\ldots,t_n)$ a set of ordered times  $0\le t_1\le\ldots\le t_n$.

The  $n-$multi-temporal  Laplace transform of the $d-$multivariate point process $\tilde{N}^{j'}$, directed by $(\phi^{j'}_\bu, \phi), \ j'=0:d,$  is defined as:
 
\begin{equation*}
\tilde{L}^{j',(n)}(a, t):=\mathbf E_{(\phi^{j'}_\bu, \phi)}\left[\exp\left(-\sum_{r=1}^n \tr a^r_\bu\;  \tilde{N}^{j'}(t_r) \right) \right].
\end{equation*}

\subsubsection{Classical Laplace Transforms of Hawkes Processes}

  Let $\tilde{N}^{j'}, \ j'=0:d,$ be Hawkes processes respectively driven by functional parameters $(\phi^{j'}_\bu, \phi )$  with respective Laplace transforms $\tilde{L}^{j',(k)}, \ j'=0:d , k= 1:n$. Then   we have the following set of integral equations relating their  multi-temporal Laplace transforms:

\begin{theorem}
\label{thm:1}
With previous notations $t_{[k,m]}$, \ $a^{[k,m]}_\bu, \
\overline{a^{[k,m]}_j}= \sum_{r=k}^m a^r_j$  and putting $t_0=0$, we have
for every $j' =0:d$ and $m =1:n$, 
\begin{equation}\label{Eq-tildeL}
	\tilde{L}^{j',(m)} \left(a,  t\right)= \tilde{p}^{j'}_0(t_m)
	\prod_{k = 1}^m 
	\exp\left(\sum_{j=1}^d
	\int_{t_{k- 1}}^{t_k} 
	 e^{- \overline{a^{[k,m]}_j}} \tilde{L}^{j,(m+1-k)}(a^{[k,m]}_\bu,  (t-u)_{[k,m]}) \ 
	\phi^{j'}_{j}(u)\mathrm du
	\right).
\end{equation}
\end{theorem}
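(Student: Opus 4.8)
The plan is to derive the identity \eqref{Eq-tildeL} directly from the immigrant--birth representation \eqref{rep}, extended from $\tilde{N}^0$ to a generic $\tilde{N}^{j'}$ by replacing the baseline $\lambda^0$ with $\phi^{j'}_\bu$ (so that the convention $\phi^0_\bu=\lambda^0$ recovers the case $j'=0$). Thus $\tilde{N}^{j'}$ is the superposition of an ancestor Poisson process $N^0$ of intensity $\phi^{j'}_\bu$ and, at each ancestor time $T^{0,j}_l$ of type $j$, an independent sub-Hawkes process $\tilde{N}^{j,(l)}$ directed by $(\phi^j_\bu,\phi)$ and distributed as the reference process $\tilde{N}^j$. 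First I would substitute this decomposition into $\sum_{r=1}^m \tr a^r_\bu\,\tilde{N}^{j'}(t_r)$, so that the exponent splits into the ancestor part $\sum_{r=1}^m \tr a^r_\bu\, N^0(t_r)$ and a sum of offspring contributions, one cluster per ancestor point.

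Next I would condition on the ancestor process $N^0$. By the conditional independence and identical distribution of the sub-processes established in the remarks following \eqref{rep}, the conditional expectation factorizes over ancestor points. An ancestor point of type $j$ at time $u$ contributes two things jointly: its own unit increment, which adds $e_j$ to $N^0(t_r)$ for every observation time $t_r>u$, and the descendant cluster $\tilde{N}^j(\,\cdot - u)$ it ignites. Grouping the times $0=t_0\le t_1\le\cdots\le t_m$ according to which interval $[t_{k-1},t_k)$ contains $u$, a point with $u\in[t_{k-1},t_k)$ influences exactly the tail times $t_{[k,m]}$; its self-increment weighs the exponent by $\exp(-\sum_{r=k}^m a^r_j)=\exp(-\overline{a^{[k,m]}_j})$, while averaging over its descendant cluster with the restricted coefficients $a^{[k,m]}_\bu$ and shifted times $(t-u)_{[k,m]}$ produces exactly $\tilde{L}^{j,(m+1-k)}(a^{[k,m]}_\bu,(t-u)_{[k,m]})$ (there are $m+1-k$ tail times). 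A point with $u>t_m$ influences no observation time and so contributes the trivial factor $1$.

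I would then evaluate the expectation over $N^0$ via the exponential (Campbell) formula for the marked Poisson process whose points are the ancestor times (component $j$ having intensity $\phi^{j'}_j$) and whose marks are the independent descendant clusters. This turns the product over points into $\exp\big(\sum_{j=1}^d\int_0^\infty (g_j(u)-1)\,\phi^{j'}_j(u)\,\mathrm du\big)$, where $g_j(u)$ is the per-point factor identified above. Splitting the integral over the intervals $[t_{k-1},t_k)$ (the range $u>t_m$ drops out since $g_j-1=0$ there) and separating the constant $-1$ term, I would recognize $\exp(-\sum_{j=1}^d\int_0^{t_m}\phi^{j'}_j(u)\,\mathrm du)=\tilde{p}^{j'}_0(t_m)$ from the previous Lemma, while the remaining $e^{-\overline{a^{[k,m]}_j}}\tilde{L}^{j,(m+1-k)}(\cdots)$ terms reassemble, interval by interval, into the product $\prod_{k=1}^m \exp(\cdots)$ of \eqref{Eq-tildeL}.

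The routine part is the algebra of reorganizing the single sum over intervals into the displayed product. The hard part will be the rigorous use of the cluster/marked-Poisson exponential formula: one must treat the entire ignited sub-Hawkes path as the mark attached to each ancestor point, check that these marks are i.i.d. and independent of the ancestor locations (exactly the remarks following \eqref{rep}), and verify the integrability needed for Campbell's formula, i.e. that the offspring remains a.s. finite on $[0,t_m]$ so that each $\tilde{L}^{j,(m+1-k)}$ is well defined and $g_j(u)-1$ is integrable against $\phi^{j'}_j$. The careful bookkeeping of which tail $t_{[k,m]}$ an ancestor at $u$ excites, and keeping its own count $e^{-\overline{a^{[k,m]}_j}}$ separate from the offspring Laplace factor, is where most of the attention is required.
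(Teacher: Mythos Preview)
Your proposal is correct and follows essentially the same route as the paper's proof: both use the cluster representation \eqref{rep}, condition on the ancestor Poisson process, partition ancestor points according to the interval $[t_{k-1},t_k)$ containing them, and then evaluate the resulting Poisson expectation. The only difference is packaging: you invoke the marked-Poisson Laplace/exponential functional as a single step, whereas the paper carries out that same computation by hand---conditioning on the Poisson counts $N^0_j(t_k)-N^0_j(t_{k-1})=q_j$ in each interval, using the order-statistics property that the points are then i.i.d.\ with density $\phi^{j'}_j/\!\int\phi^{j'}_j$, and summing the Poisson series in $q$ to recover the exponential.
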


\paragraph{Remark.} This theorem mainly tells us  that the  Laplace transform related to any $\lambda^0$ can be derived through the solution of the system of $d$ basic Laplace transforms 	$\tilde{L}^{j',(k)}, \ j'=1:d$. The system of non-linear integral equations \eqref{Eq-tildeL} will yield Volterra equations for the first and second order moments of the process (see Section \ref{section_int_equation_first_moments}). Hence, we call this system a Volterra-like system of equations.

\begin{proof}
 We thoroughly use the representation \eqref{rep} and the associated remarks, where $N^0$ is a multivariate Poisson process with independent components.
 For sake of simplicity we denote for $j=1:d$,  the set  of occurrence times 
 $\mathcal{T}^j_k=\{T^{0,j}_l:   t_{k-1}\leq T^{0,j}_l<t_k \}$ and  the corresponding set of indices  $E^j_k=\{ l :  T^{0,j}_l \in \mathcal{T}^j_k \}$ both of random size $Q^j_k$. 
 
For each $m=1,\ldots,n$,  the   expansion      
 $$
 \tilde{N}^0(t_m)=\sum_{k=1}^m \left( \left[N^0(t_k)-N^0(t_{k-1})\right] + \sum_{j,l \in E^j_k}  \tilde{N}^{j,(l)} (t_m- T^{0,j}_l) \right),
 $$
 shows that $\tilde{N}^0(t_m)$ is the sum of the contributions of $m$ independent Hawkes processes $\tilde{N}^{0,(k)}(t_m)$, each directed by  $(\lambda^0\chi_{ [t_{k-1}, t_k [ }, \phi)$.

Therefore, inverting summation order and  regrouping  similar terms yields:
 
 $$\sum_{m=1}^n \tr a^m_\bu\;  \tilde{N}^0 (t_m)= \sum_{k=1}^n \left( 
  \tr \overline{  a^{[k,n]}_\bu} [N^0(t_k) -N^0(t_{k-1})] + 
 \sum_{m=k}^n \sum_{j,l \in E^j_k}   \tr a^m_\bu \tilde{N}^{j,(l)} (t_m- T^{0,j}_l)    \right),
 $$
which reads as the sum of the contributions of  $n$ independent  Hawkes processes $\tilde{N}^{0,(k)}$.

 Next, for the Poisson process $N^0$ and $q=\tr (q_1,\ldots,q_d) \in \mathbb{N}^d$, we recall   that  
 \begin{align*}p_k^0(q)&=\mathbf P( N^0(t_k) -N^0(t_{k-1} ) =q)\\
 &=\prod_{j=1}^d \exp{ (\Lambda_j^0(t_{k-1}) -\Lambda_j^0(t_k) )}   \left( \Lambda_j^0(t_k) -\Lambda_j^0(t_{k-1})  \right)^{q_j} /q_j ! .
 \end{align*}
 We also recall that for each $k$, conditionally to the event $\{N^0(t_k) -N^0(t_{k-1} ) =q\}$, the $d$ sets  of occurrence times 
 $\mathcal{T}^j_k, \ j=1:d,$ are independent, that for each given $j$, 
 the set  $\mathcal{T}^j_k=\{T^{0,j}_l :   t_{k-1}\leq T^{0,j}_l<t_k \}$ of size $Q^j_k$ is a  sequence of i.i.d.  random variables  having probability density   
 $\lambda^0_j(u)/(\Lambda_j^0(t_k) -\Lambda_j^0(t_{k-1} ) )$ with  $ u \in [t_{k-1},t_k [$
 \cite{david2004order}. Consequently the set   $\{ \tilde{N}^{j,(l)} (\cdot- T^{0,j}_l) : t_{k-1}\leq T^{0,j}_l<t_k \}$ of size $Q^j_k$ also forms a sequence of i.i.d. Hawkes processes.

 So, we have

\begin{equation*}
\begin{aligned}
\label{eq_intermediaire}
    \tilde{L}^{0,(n)} (a,t)=& 
        \prod_{k=1}^n  \left(\sum_{q} p^0_k(q) \prod_{j=1}^d 
        e^{-\overline{a^{[k,n]}_j} q_j} \mathbf{E}\left[\prod_{l\in E^j_k } e^{ \left(-
      \sum_{m=k}^n  \tr a^m_\bu \tilde{N}^{j,(l)} (t_m- T^{0,j}_l) \right)}  |
    Q^j_k=q_j,  \mathcal{T}^j_k \right]\right) \\    
    =&\prod_{k=1}^n  \left(\sum_{q} p^0_k(q) \prod_{j=1}^d
    e^{-\overline{a^{[k,n]}_j} q_j} \mathbf{E} \left[\prod_{l \in E^j_k} 
     \tilde{L}^{j,(n+1-k)} (a^{[k,n]}_\bu, (t-T^{0,j}_l)_{[k,n]}) |
    Q^j_k=q_j  \right]\right) \\   
    =& \prod_{k=1}^n  \left(\sum_{q} p^0_k(q) \prod_{j=1}^d
        e^{- \overline{a^{[k,n]}_j} q_j}     
       \left( \int_{t_{k-1}}^{t_k} \tilde{L}^{j,(n+1-k)} (a^{[k,n]}_\bu, (t-T^{0,j}_l)_{[k,n]}) 
       \frac{\lambda^0_j(u) \mathrm du}{\Lambda^0(t_k) -\Lambda(t_{k-1})} 
        \right)^{q_j}
         \right) \\ 
  =& \prod_{k=1}^n   
  \exp \left(   \sum_{j=1}^d    
 \int_{t_{k-1}}^{t_k}  \left( e^{- \overline{a^{[k,n]}_j}} \tilde{L}^{j,(n+1-k)}  (a^{[k,n]}_\bu, (t-u)_{[k,n]}) -1\right)
  \lambda^0_j(u) \mathrm du 
  \right)  \\
  =&   \exp\left( -\sum_{j=1}^d  \int_0^{t_n} \lambda^0_j(u) \mathrm du \right)\ \prod_{k=1}^n   
  \exp \left(   \sum_{j=1}^d    
  \int_{t_{k-1}}^{t_k}   e^{- \overline{a^{[k,n]}_j}} \tilde{L}^{j,(n+1-k)}  (a^{[k,n]}_\bu, (t-u)_{[k,n]}) \ \ 
  \lambda^0_j(u) \mathrm du 
  \right).          
\end{aligned}
\end{equation*}

We  notice that the computation of any characteristic function  $\tilde{L}^{0,(n)}$  is  uniquely determined  by the knowledge of the Laplace transforms $\tilde{L}^{j',(k)}, \ j'=1:d, k=1:n $. Consequently, replacing $\lambda^0$ with $\phi^{j'}_\bu, \ j'=1:d$, yields a fundamental set of $d$ dependent integral equations:

\begin{equation}\label{laplacek}
	\tilde{L}^{j',(n)}\left(a,  t\right)= \tilde{p}^{j'}_0(t_n)
	\prod_{k = 1}^n 
	\exp\left( \sum_{j=1}^d
	\int_{t_{k - 1}}^{t_k} 
	 e^{- \overline{a^{[k,n]}_j}} \tilde{L}^{j,(n+1-k)},  (t-u)_{[k,n]})
	 \ \  \phi^{j'}_{j}(u)\mathrm du
	\right).
\end{equation}
\end{proof}

Equation \eqref{laplacek} actually yields a recursive formula of Laplace transforms.

\begin{corollary}
Enhancing the dependence of Laplace transforms with respect to $\lambda^0$, we get 
$$\tilde{L}^{0,(n)}_{\lambda^0} (a,t) = \tilde{L}^{0,(n)}_{\lambda^0\chi_{]0,t_n]}} (a,t) = \prod_{k=1}^n \tilde{L}^{0,(n+1-k)}_{\lambda^0\chi_{]t_{k-1},t_k]}}
(a^{[k,n]}_\bu,(t-t_{k-1})_{[k,n]}).$$   
\end{corollary}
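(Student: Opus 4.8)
The plan is to avoid re-solving the integral system and instead read the identity off the population representation~\eqref{rep} together with the infinite-divisibility decomposition recorded in the third ``important remark''. Two elementary facts do all the work: (i) the superposition of independent Hawkes processes driven by $(\lambda^{0,(l)},\phi)$ is again Hawkes, driven by $(\sum_l \lambda^{0,(l)},\phi)$, so subdividing the baseline turns a Laplace transform into a product; and (ii) a Hawkes process whose baseline is supported on an interval $]s,s']$ has all its first-generation immigrants, hence all its points, occurring after time $s$, so it is identically $0_{[d]}$ on $[0,s]$.

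For the first equality I would split $\lambda^0=\lambda^0\chi_{]0,t_n]}+\lambda^0\chi_{]t_n,+\infty[}$. By (i), $\tilde N^0$ driven by $(\lambda^0,\phi)$ is the independent sum of the two corresponding Hawkes processes, and by (ii) the second one vanishes on $[0,t_n]$, hence at every evaluation time $t_r\le t_n$. Its contribution to the exponential is therefore $\exp(-\sum_r \tr a^r_\bu\,0_{[d]})=1$, and factoring the expectation over the two independent pieces leaves exactly $\tilde L^{0,(n)}_{\lambda^0\chi_{]0,t_n]}}(a,t)$.

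For the product formula I would subdivide $]0,t_n]=\bigcup_{k=1}^n\,]t_{k-1},t_k]$. Fact (i) presents $\tilde N^0$ driven by $(\lambda^0\chi_{]0,t_n]},\phi)$ as a sum of $n$ \emph{independent} Hawkes processes $\tilde N^{0,(k)}$ driven by $(\lambda^0\chi_{]t_{k-1},t_k]},\phi)$, so the joint Laplace transform factors into $\prod_{k=1}^n \tilde L^{0,(n)}_{\lambda^0\chi_{]t_{k-1},t_k]}}(a,t)$. Now fact (ii) applies to each factor: the $k$-th process has all its immigrants in $]t_{k-1},t_k]$, hence is $0_{[d]}$ at $t_1,\dots,t_{k-1}$, which kills the weights $a^1_\bu,\dots,a^{k-1}_\bu$ and collapses the $n$-time transform to the $(n+1-k)$-time transform carried by $a^{[k,n]}_\bu$ over the remaining times $t_k,\dots,t_n$. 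A final re-basing of the time origin at $t_{k-1}$ --- legitimate because the Poisson--cluster construction is translation covariant, so it transports the restricted baseline and simultaneously sends each $t_r$ to $t_r-t_{k-1}$ --- yields the displayed argument $(t-t_{k-1})_{[k,n]}$ and completes the proof.

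The only genuinely delicate point is the bookkeeping of this last translation: the shift of the evaluation times and the transport of the support of the restricted baseline must be carried out together, while $\phi$, shared by all pieces, is left untouched. As a cross-check I would also derive the factorization straight from Theorem~\ref{thm:1}: expanding $\tilde p^{0}_0(t_n)=\prod_{k=1}^n\exp(-\sum_{j}\int_{t_{k-1}}^{t_k}\lambda^0_j(s)\,\mathrm ds)$ and pairing the $k$-th exponential of~\eqref{laplacek} with the matching piece of this prefactor reproduces the single-interval transform $\tilde L^{0,(n+1-k)}_{\lambda^0\chi_{]t_{k-1},t_k]}}$ evaluated at the unshifted times $t_k,\dots,t_n$; that the translation step recasts these at $(t-t_{k-1})_{[k,n]}$ is exactly the translation covariance to be checked, so the product structure is already latent in the proof of Theorem~\ref{thm:1}.
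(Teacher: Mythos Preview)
Your argument is correct and matches the paper's intent. The paper does not spell out a proof of the Corollary; it simply remarks that ``Equation~\eqref{laplacek} actually yields a recursive formula of Laplace transforms'' and states the Corollary. Your probabilistic derivation via the superposition/restriction decomposition is precisely the mechanism already exploited inside the proof of Theorem~\ref{thm:1} (where $\tilde N^0(t_m)$ is written as the sum of the independent pieces $\tilde N^{0,(k)}$ driven by $(\lambda^0\chi_{[t_{k-1},t_k[},\phi)$), and your algebraic cross-check---splitting $\tilde p^0_0(t_n)$ across the $k$ intervals and pairing each piece with the $k$-th exponential in~\eqref{laplacek}---is exactly the reading the paper has in mind. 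So the two routes coincide.

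One small caveat worth flagging, which you already sense in your ``bookkeeping'' remark: the displayed formula keeps the \emph{unshifted} baseline $\lambda^0\chi_{]t_{k-1},t_k]}$ as subscript while simultaneously shifting the evaluation times to $(t-t_{k-1})_{[k,n]}$. Taken literally these two choices are inconsistent; the identity that actually holds is
\[
\tilde L^{0,(n+1-k)}_{\lambda^0\chi_{]t_{k-1},t_k]}}\bigl(a^{[k,n]}_\bu,\,t_{[k,n]}\bigr)
\;=\;
\tilde L^{0,(n+1-k)}_{(\lambda^0)_{t_{k-1}^+}\chi_{]0,\,t_k-t_{k-1}]}}\bigl(a^{[k,n]}_\bu,\,(t-t_{k-1})_{[k,n]}\bigr),
\]
by the translation covariance you invoke. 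Both sides equal the $k$-th factor $F_k$ extracted from~\eqref{laplacek}, so the mathematics is fine; the Corollary's notation is a mild shorthand mixing the two equivalent forms. Your proof handles this correctly.
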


The same recurrence holds for the equations satisfied by $\tilde{L}^{j',(n)}_{\lambda^0} (a,t), \ j'=1:d $.


\subsection{Functional Laplace Transforms}

Let $t\in\mathbb R$ and $\psi:\mathbb R_+ \longrightarrow \mathbb R_+^d $ be a bounded measurable vector function. We now state a somewhat more general theorem concerning the functional Laplace transform of multivariate Hawkes processes $\tilde{N}^{j'}$ driven by $(\phi^{j'}_\bu,\phi), \ j'=0:d$, where the functional Laplace transform is defined as:

\begin{equation*}
	\tilde{S}^{j'}(\psi, t):=\mathbf E_{(\phi^{j'}_\bu,\phi)}\left[\exp\left( -\int_0^{t}\tr\psi(s) \tilde{N}^{j'}(\mathrm ds) \right)\right].
\end{equation*}

 We have the following result, whose proof is very similar to that of Theorem \ref{thm:1}
\bigskip

\begin{theorem}
	\label{thrm:2}
	For $t\geq 0$ and any bounded measurable $\psi:\mathbb R_+ \longrightarrow \mathbb R_+^d$,  we have for $j'=0:d$,
	\small
	\begin{equation*}
		\tilde{S}^{j'}\left(\psi, t\right)= \tilde{p}^{j'}_0(t)\exp \left( -\sum_{j=1}^d\, \int_0^{t} 
		e^{\psi_j(s)} \tilde{S}^j(\psi_{s+},t-s) 
		\phi^{j'}_j(s)\,\mathrm ds
		\right).
	\end{equation*}
\end{theorem}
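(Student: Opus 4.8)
The plan is to mimic the proof of Theorem \ref{thm:1} but to work with the full increment measure $\tilde N^{j'}(\mathrm ds)$ rather than evaluations at finitely many times $t_1,\dots,t_n$; indeed, Theorem \ref{thrm:2} is morally the continuous limit of Theorem \ref{thm:1} obtained by letting the partition $0=t_0\le\cdots\le t_n=t$ become arbitrarily fine and encoding the weight vector $a$ as the function $\psi$. I would start from the population representation \eqref{rep} for $j'=0$ (the cases $j'=1:d$ follow by replacing $\lambda^0$ with $\phi^{j'}_\bu$, exactly as in the corollary to Theorem \ref{thm:1}), and write the stochastic integral against $\tilde N^0(\mathrm ds)$ as the sum of the ancestor Poisson contribution and the contributions of the daughter Hawkes processes ignited at the first-generation times $T^{0,j}_l$:
\begin{equation*}
\int_0^t \tr\psi(s)\,\tilde N^0(\mathrm ds)
= \int_0^t \tr\psi(s)\, N^0(\mathrm ds)
+ \sum_{j=1}^d \sum_{l=1}^{N^0_j(t)} \int_{T^{0,j}_l}^t \tr\psi(s)\,\tilde N^{j,(l)}(\mathrm d(s-T^{0,j}_l)).
\end{equation*}
The crucial bookkeeping is the change of variable $s\mapsto s-T^{0,j}_l$ in the daughter integral, which turns the test function $\psi$ into its translate $\psi_{T^{0,j}_l+}$ and the horizon $t$ into $t-T^{0,j}_l$; this is precisely what produces the translated argument $\psi_{s+}$ and the reduced horizon $t-s$ in the stated formula, playing the role that the restriction $a^{[k,m]}_\bu$ and the shifted times $(t-u)_{[k,m]}$ played in Theorem \ref{thm:1}.

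Next I would condition on the ancestor process $N^0$ and exploit the two structural facts recorded in the ``Important remarks for the sequel'': conditionally on the first-generation times, the daughter processes $\tilde N^{j,(l)}$ are mutually independent, and for fixed $j$ they are i.i.d.\ copies of the reference process $\tilde N^j$. Taking the conditional expectation factorizes the exponential into a product over $j$ and over the points of $N^0$, and each factor is, by definition, the functional Laplace transform $\tilde S^j(\psi_{u+},t-u)$ evaluated at the corresponding ancestor time $u$. The remaining ancestor expectation is then an expectation over the Poisson process $N^0$ of a multiplicative functional of its points, together with the factor $e^{\psi_j(s)}$ coming from the mass that each ancestor point itself contributes to $\int_0^t \tr\psi\,\mathrm dN^0$.

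The key computational step is the exponential (Campbell) formula for Poisson processes: for an independent-component Poisson process with intensity $\lambda^0_j$, the expectation of a product over its points $\prod_{l} g(T^{0,j}_l)$ equals $\exp\big(\int_0^t (g(u)-1)\,\lambda^0_j(u)\,\mathrm du\big)$. Here $g(u)=e^{\psi_j(u)}\tilde S^j(\psi_{u+},t-u)$, so the $-1$ term integrates to $\exp\big(-\sum_j\int_0^t\lambda^0_j\big)=\tilde p^0_0(t)$ by the Lemma, and the $g$ term yields exactly the exponential of $-\sum_j\int_0^t e^{\psi_j(s)}\tilde S^j(\psi_{s+},t-s)\,\phi^0_j(s)\,\mathrm ds$ once $\lambda^0=\phi^0_\bu$ is reinstated; this reproduces the claimed identity. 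I expect the main obstacle to be purely notational rather than conceptual: one must handle the simultaneous translation of the test function and contraction of the horizon cleanly, and justify interchanging the conditional expectation with the (a.s.\ finite but random) product over ancestor points, which is legitimate because $\psi$ is bounded and $N^0(t)$ is integrable, so all the exponentials are bounded by $1$ and dominated convergence applies. The boundedness and measurability hypotheses on $\psi$ are exactly what guarantee that every integral and every Laplace transform appearing in the recursion is well defined and finite.
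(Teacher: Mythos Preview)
Your approach is correct and essentially identical to the paper's: both decompose via the population representation \eqref{rep}, condition on the ancestor Poisson process, use conditional independence and the i.i.d.\ structure of the daughter processes to factor, and then evaluate the remaining Poisson expectation --- the paper carries this last step out by hand (conditioning on $N^0_j(t)$, invoking the order-statistics property of Poisson points, and summing the Poisson series), which is exactly the standard derivation of the Campbell formula you cite directly. One caveat: the displayed statement carries sign typos that your sketch inherits; since the functional Laplace transform is $\mathbf E[e^{-\int\tr\psi\,\mathrm dN}]$, each ancestor point contributes a factor $e^{-\psi_j(s)}$, so the correct $g(u)$ is $e^{-\psi_j(u)}\tilde S^j(\psi_{u+},t-u)$ and Campbell's formula gives $\tilde p^{j'}_0(t)\exp\bigl(+\sum_j\int_0^t e^{-\psi_j(s)}\tilde S^j(\psi_{s+},t-s)\,\phi^{j'}_j(s)\,\mathrm ds\bigr)$, as the paper's own proof confirms.
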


\begin{proof}

	We start dealing with the case $j'=0$ driven by $(\lambda^0,\phi)$. Once again, like in the proof of Theorem \ref{thm:1}, let us  denote for $j=1:d$ \  the set  of occurrence times 
	$\mathcal{T}^j=\{T^{0,j}_l:   T^{0,j}_l \leq t \}$ and  its associate  set of indices  $E^j=\{ l :  T^{0,j}_l \in \mathcal{T}^j \}$ both of random size $N^0_j(t)$.

    From the process representation \eqref{rep}, we have
	\begin{equation*}
		\begin{aligned}
			\int_{0}^{t}
			\tr \psi(s)\, \tilde{N}^0(\mathrm ds)
			&= \int_{0}^{t}
			\tr \psi(s)\,  N^{0}(s) 
			+\sum_{j=1}^d \sum_{l=1}^{N^0_j(t)}			\left(
			\int_0^{t-T^{0,j}_l}
			\tr \psi(T^{0,j}_l+s)\, \tilde{N}^{j,l}(\mathrm ds)
			\right)\\ 
			&=\sum_{j=1}^d \sum_{l=1}^{N^0_j(t)} \left(
			 \psi_j(T^{0,j}_l)
			+
			\int_{0}^{t-T^{0,j}_l}
			\tr \psi(T^{0,j}_l + s)\, \tilde{N}^{j,l}(\mathrm ds)\right).
		\end{aligned}
	\end{equation*}
	
	\noindent Hence, considering the independence of the point processes $N^0_j, \ j=1:d,$ and  that conditionally to  $ N^0_j(t)= n^0_j$, the set $\mathcal{T}^j$ forms an i.i.d. sample of size $n^0_j$ of random variables with probability distribution function $\lambda^0_j (s)/\Lambda^0_j (t)$ concentrated on the interval $[0,t]$ for each $j=1,\ldots, d$,   entails that  
	the set of random variables  
	\[\{  \tilde{S}^j(\psi_{T^{0,j}_l+}, t-T^{0,j}_l ) =E_{(\phi^j_{\bu}, \phi)} \big(\exp \big(-\int_{0}^{t-T^{0,j}_l} 
		\tr \psi(T^{0,j}_l+s)\, \tilde{N}^{j,(l)}(\mathrm ds)\big) \big); \  l=1,\ldots, n^0_j   \}\]
	constitutes also a sample of i.i.d. random variables.
	
Therefore, we have 
	\begin{eqnarray*}
	   	\tilde{S}^0(\psi, t)
			&=&\mathbf E_{(\lambda^0, \phi)} \Bigg[
			\exp\left(-\sum_{j=1}^d \sum_{l=1}^{N^0_j(t)} \left(
			\psi_j(T^{0,j}_l)
			+
			\int_{0}^{t-T^{0,j}_l}
			\tr \psi(T^{0,j}_l + s)\, \tilde{N}^{j,l}(\mathrm ds)\right)\right) \Bigg]
			\\
			&=& \prod_{j=1}^d \mathbf E_{(\lambda^0, \phi)}\Bigg(
			\mathbf E \Big(\prod_{l=1}^{N^0_j(t)}
		e^{- \psi_j(T^{0,j}_l)} \mathbf E_{(\phi^j_{\bu},\phi)} \left( \exp \left(-
			\int_{0}^{t-T^{0,j}_l}\tr \psi(T^{0,j}_l + s)\, \tilde{N}^{j,l}(\mathrm ds) \right)\right) \Big|    N^0_j(t) \Big) \Bigg)	\\	
			&=& \prod_{j=1}^d \mathbf E_{(\lambda^0, \phi)}\Bigg(
			\Big[\mathbf E \Big(
			e^{-\psi_j(T^{0,j}_l)}\, \tilde{S}^j(\psi_{T^{0,j}_l+}, t-T^{0,j}_l )   \Big) 
			 \Big]^{ N^0_j(t)} \Bigg)	\\			
		&=&\prod_{j=1}^d \mathbf E_{(\lambda^0, \phi)}\Bigg( \Big[
		\int_0^t e^{-\psi_j(s)} \tilde{S}^j(\psi_{s+}, t-s)
		 \frac{\lambda^0_j(s)}{\int_0^t\lambda^0_j(v)\mathrm dv} \mathrm ds
		\Big]^{ N^0_j(t)} \Bigg).	\\	
	\end{eqnarray*}
	
Since  $N^0_j (t)$  are Poisson distributed with respective mean parameter 
$\Lambda^0_j(t)=\int_0^t \lambda^0_j(v)\mathrm dv$, we finally get 

\begin{eqnarray*}
	\tilde{S}^0(\psi, t)		
	&=&\prod_{j=1}^d \Big( \sum_{k=0}^\infty \frac{e^{-\Lambda^0_j(t) }}{k!}
	(\Lambda^0_j(t))^k \Big[
	\int_0^t e^{-\psi_j(s)} \tilde{S}^j(\psi_{s+}, t-s)
	\frac{\lambda^0_j(s) \mathrm ds}{\Lambda^0_j(t)}
	\Big]^k \Big)\\
	&=& e^{-\sum_{j=1}^d \Lambda^0_j(t)} \exp\Big(\sum_{j=1}^d \int_0^t e^{-\psi_j(s)} \tilde{S}^j(\psi_{s+}, t-s)
				\lambda^0_j(s) \mathrm ds\Big).
\end{eqnarray*}

	Similarly, replacing $\lambda^0$ with  $\phi^{j'}_{\bu}; \ j'=1:d$,  yields
	$$\tilde{S}^{j'}(\psi, t)= 	e^{-\sum_{j=1}^d \int_0^t \phi^{j'}_j(s)\mathrm ds} \exp\Big(
	\sum_{j=1}^d \int_0^t e^{-\psi_j(s)} \tilde{S}^j(\psi_{s+}, t-s) \phi^{j'}_j (s)\mathrm ds\Big).	$$
	
\end{proof}

\paragraph{Remarks}  
\begin{enumerate}
		
\item Actually Theorem \ref{thm:1} can   be deduced from Theorem \ref{thrm:2} by considering the following piece-wise constant function:
\begin{equation*}
	\psi(s)=\sum_{k=1}^n  \overline{a^{[k,n]}}\mathbf 1_{]t_{k-1}, t_k]}(s)
\end{equation*}
 and by observing that for  $s \in ]t_{k-1}, t_k]$ and $j=1:d$, we have 
$\psi_j(s) = \overline{a^{[k,n]}_j }$ and $\tilde{S}^j(\psi_{s+}, t-s) =
 \tilde{L}^{j,(n+1-k)} (a^{[k,n]}_\bu, (t-s)_{[k,n]}) $.
 
 \item  Moreover, emphasizing the dependence upon $\lambda^0$ via the notation $ \tilde{S}^{0}=\tilde{S}_{\lambda^0}$, we obtain a similar recurrence equation: for $0< t_1<\cdots< t_n <t$, 
 $$ \tilde{S}_{\lambda^0} \left(\psi, t\right)= \prod_{k=1}^n \tilde{S}_{\lambda^0\chi_{]t_{k-1},t_k]}}
 (\psi_{t_{k-1}^+},t-t_{k-1}).  $$
 
 \item Functional Laplace transforms may be of some importance in applications. For example, suppose that we are interested in the assessment of the medical or financial costs of an epidemic in $d$ regions, while the counting process is only observed over deterministic region-specific time intervals $]C_j^{2k}, C_j^{2k+1}], \ j=1:d, \ k \geq 0$. In this case, the relevant Laplace transform  would concern the functions
 $\psi_j(s)= a_j \theta_j(s) \sum_{k\geq 0}  \chi_{ ]C_j^{2k}, C_j^{2k+1}] }(s)$, where 
$\theta_j(s)$ corresponds to the  cost induced by the occurrence of an event in region $j$  at time $s$ and the vector $a=\tr(a_1,\ldots,a_d)$ holds for
variables of the Laplace transform $L^{j'}_C(a,t)$ of censored observations,  that is $\tilde{S}^{j'}(\psi,t)=L^{j'}_C(a,t).$

\end{enumerate}

\section{Infinite  Divisibility and Distribution Probabilities of Counts} 
\label{section_inf_div}

\subsection{Preliminaries}
We detail here the precise form of the infinitely divisible property for Hawkes processes and 
the integral equations related to the probability distribution of its counts.
First  we start with the following analytic result.
\begin{proposition} 
Let $f(z)= \sum_{l \in \mathbb{N}^d} \alpha_l z^l$  be a $d$-multivariate  analytic function on the complex polydisc $D^d$ with  $z=(z_1,\ldots,z_d), \, l=(l_1,\ldots, l_d)$ and $z^l=z_1^{l_1}\times \ldots\times z_d^{l_d}$. Then $g(z)= e^{f(z)}$ admits the representation
$	g(z) = e^{\alpha_0} \big( 1+ \sum_{k\ne 0} \beta_k z^k \big) $ where for $k=(k_1,\ldots,k_d)\neq 0$ and 
$|k|= k_1+\ldots+k_d$

\begin{equation} \label{coef1}
\beta_k = \sum_{r=1}^{|k|} \frac{ 1}{r! }\sum_{\substack{l^{(1)}\ne 0,\ldots, l^{(r)}\ne 0 \\ l^{(1)}+\ldots+ l^{(r)}= k}}
\alpha_{l^{(1)}}\times \ldots \times \alpha_{l^{(r)}}.
\end{equation}
Conversely, we have  for $l\neq 0$
\begin{equation}\label{coef2}
	\alpha_l = \sum_{r=1}^{|l|} \frac{ (-1)^{r-1}}{r}\sum_{\substack{k^{(1)}\ne 0,\ldots, k^{(r)}\neq 0 \\ k^{(1)}+\ldots+ k^{(r)}= l} }
\beta_{k^{(1)}}\times \ldots \times \beta_{k^{(r)}}.
\end{equation}
\end{proposition}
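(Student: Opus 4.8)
The plan is to peel off the constant term and invoke the formal exponential and logarithm series, then read off the coefficients by a multinomial expansion; the two directions are dual, so most of the work is done once. First I would write $f(z)=\alpha_0+h(z)$ with $h(z)=\sum_{l\ne 0}\alpha_l z^l$, so that $h(0)=0$ and $g(z)=e^{\alpha_0}e^{h(z)}$, reducing everything to the expansion of $e^{h}$. The crucial structural fact is that $h$ carries no constant term: consequently $h^r$ involves only monomials $z^m$ with $|m|\ge r$, so that for each fixed $k$ only finitely many powers $r$ contribute to the coefficient of $z^k$. This is exactly what makes the sums in \eqref{coef1} and \eqref{coef2} terminate (at $r=|k|$, respectively $r=|l|$).

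For the forward direction I would substitute the series for $h$ into $e^{h}=\sum_{r\ge 0}h^r/r!$ and expand each power as
\[
h(z)^r=\Big(\sum_{l\ne 0}\alpha_l z^l\Big)^r=\sum_{l^{(1)}\ne 0,\ldots,l^{(r)}\ne 0}\alpha_{l^{(1)}}\cdots\alpha_{l^{(r)}}\,z^{l^{(1)}+\cdots+l^{(r)}}.
\]
Collecting the coefficient of a fixed $z^k$ with $k\ne 0$ amounts to summing over all ordered $r$-tuples of nonzero multi-indices with $l^{(1)}+\cdots+l^{(r)}=k$; since each $|l^{(i)}|\ge 1$ forces $r\le|k|$, this reproduces precisely the factor $1/r!$ and the inner sum of \eqref{coef1}, while the $r=0$ term contributes the constant $1$. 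Hence $g(z)=e^{\alpha_0}\big(1+\sum_{k\ne 0}\beta_k z^k\big)$.

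For the converse I would set $w(z)=e^{-\alpha_0}g(z)-1=\sum_{k\ne 0}\beta_k z^k$, which again has vanishing constant term, and recover $h=\log(1+w)=\sum_{r\ge 1}\frac{(-1)^{r-1}}{r}w^r$. Expanding $w^r$ exactly as above but with the $\beta$'s in place of the $\alpha$'s, and extracting the coefficient of $z^l$ for $l\ne 0$, yields \eqref{coef2}, the sum over $r$ again terminating at $r=|l|$ because each $|k^{(i)}|\ge 1$.

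The only point demanding care, and the main (mild) obstacle, is the legitimacy of these rearrangements and of the compositions with $\exp$ and $\log$. I would dispose of it by carrying out the computation in the ring of formal power series $\mathbb{C}[[z_1,\ldots,z_d]]$: there, composition with $\exp$ and with $\log(1+\cdot)$ is well defined \emph{precisely} because $h$ and $w$ have zero constant term, so every coefficient is a finite sum and no convergence issue arises, making \eqref{coef1} and \eqref{coef2} identities of formal power series. The analyticity of $f$ on $D^d$ then guarantees that $h$, $e^{h}$, $w$ and $\log(1+w)$ all converge absolutely on a common polydisc, so these formal identities are genuine identities of analytic functions there.
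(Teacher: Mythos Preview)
Your proof is correct and follows exactly the same approach as the paper, which simply invokes the series expansions of $\exp$ and $\log(1+\cdot)$ about $0$. In fact you supply considerably more detail than the paper does, including the justification of why the sums terminate at $r=|k|$ and why the formal manipulations are legitimate.
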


\begin{proof}
	Simply apply the series expansion of $\exp$ and $\log(1+\cdot)$  functions about point $0$.	
\end{proof}

\begin{corollary}\label{cor-coef}
	Let $X$ be an $\mathbb{N}^d$-valued random variable with probability generating function  $g(z)=\mathbf E(z^X)=\sum_{k \in \mathbb{N}^d} p_k z^k $ and $p_k=P(X=k)$.  If   $p_0>0$, then there exists a unique multivariate analytic function $f(z)= \sum_{l \in \mathbb{N}^d} \alpha_l z^l $ such that $g(z)=e^{f(z)}$,  with  $\alpha_0=\log(p_0)$ and for all $l\neq 0$, $\alpha_l$ satisfies Equation \eqref{coef2} with $\beta_k= p_k/p_0$.
\end{corollary}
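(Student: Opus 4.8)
The plan is to realize the claimed $f$ as the principal logarithm of $g$ in a neighborhood of the origin, and then to read off the coefficient identities directly from the Proposition.

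First I would establish analyticity. Since the $p_k$ are probabilities, $\sum_{k} p_k \le 1$, so the series $\sum_{k} p_k z^k$ converges absolutely and uniformly on the closed unit polydisc; hence $g$ is analytic on $D^d$ with $g(0) = p_0 > 0$. By continuity there is $\rho \in (0,1)$ such that $g$ does not vanish on $D_\rho^d = \{ z : |z_j| < \rho, \ j = 1:d \}$. As $D_\rho^d$ is convex, hence simply connected, and $g$ is non-vanishing there with $g(0) = p_0$ a positive real, the principal branch of the logarithm yields a well-defined analytic function $f := \log g$ on $D_\rho^d$ with $f(0) = \log(p_0)$. Being analytic, $f$ expands as a convergent power series $f(z) = \sum_{l \in \mathbb{N}^d} \alpha_l z^l$ with $\alpha_0 = \log(p_0)$, and by construction $g = e^{f}$.

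Next I would identify the coefficients. Writing $g(z) = p_0 \big( 1 + \sum_{k \neq 0} (p_k/p_0) z^k \big)$, the constant term of $g$ is $p_0$. The identities \eqref{coef1} and \eqref{coef2} are purely algebraic relations among the Taylor coefficients of $f$ and of $g = e^{f}$, valid on $D_\rho^d$, so the Proposition applies verbatim to this $f$. Its forward direction gives $g = e^{\alpha_0}\big( 1 + \sum_{k \neq 0} \beta_k z^k \big)$; matching constant terms confirms $e^{\alpha_0} = p_0$, and matching the coefficient of $z^k$ gives $\beta_k = p_k/p_0$ for every $k \neq 0$. The converse direction then delivers exactly Equation \eqref{coef2} for each $\alpha_l$, $l \neq 0$, expressed through these $\beta_k$, which proves existence together with the stated formulas. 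For uniqueness, suppose $\tilde{f} = \sum_{l} \tilde{\alpha}_l z^l$ is analytic near $0$ with $e^{\tilde{f}} = g$ and $\tilde{\alpha}_0 = \log(p_0)$; then $e^{f - \tilde{f}} \equiv 1$ on a connected neighborhood of $0$, so the continuous function $f - \tilde{f}$ takes values in $2\pi i \mathbb{Z}$, is therefore constant and equal to $f(0) - \tilde{f}(0) = 0$, whence $\tilde{f} = f$.

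The only genuine obstacle is the first step: one must be sure that $g$ is jointly analytic near the origin and that a single-valued analytic logarithm exists there. Both reduce to standard facts — summability of the $p_k$ for the former, and non-vanishing of $g$ on a simply connected polydisc neighborhood for the latter — after which the corollary is a direct transcription of the Proposition.
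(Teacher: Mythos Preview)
Your proof is correct and follows exactly the approach the paper intends: the Corollary is stated without proof, as an immediate consequence of the preceding Proposition, and you have simply spelled out the standard details (analyticity of $g$ from summability of the $p_k$, existence of a holomorphic logarithm on a small polydisc where $g$ is non-vanishing, then reading off \eqref{coef2} with $\beta_k=p_k/p_0$ and checking uniqueness). There is nothing to add.
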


\paragraph{Remark.} Notice that the summation in the previous formulas describes the number of distinct paths in the lattice $\mathbb{N}^d$ starting from point $(0,\ldots,0) $ and leading to point $k=(k_1, \ldots,k_d)$ in $r$ steps. 
Moreover the calculation of $\beta_k$ (or $\alpha_l$)  is recursive/causal since it only requires the knowledge of coefficients $\alpha_{l^{(p)}}$ with indices $l^{(p)} \leq  k$, that is to say $ 1\leq l^{(p)}_j \leq k_j, \, j=1,\ldots,d$.

\begin{proposition}\label{prop_vpos}
	Let $X$ be a multivariate random variable with probability distribution function $F(\mathrm dx)$  on $\R_+^d$, Laplace transform $L(a)=\mathbf E(e^{-\tr aX}), a \in \R_+^d$, and such that $0< p_0=F(\{0\})<1$. Let $\bar{F}^{(*)r}(\mathrm dx)$ denote the $r^{th}$ convolution of $\bar{F}= F(\mathrm dx)/(1-p_0)$ probability measure on $(\R_+^d)_*$. Then there exists a unique finite signed measure $\gamma(\mathrm dx)$ on $(\R^d_+)_*$ such that $\gamma((\R^d_+)_*)= -\log ( p_0)$ and
	\begin{equation} \label{id-rep}
		L(a)=\exp\left( \log(p_0)+\int_{(\R_+^d)_*} e^{-\tr a x}\gamma(\mathrm dx)  \right)
	\end{equation}
with
\begin{equation}\label{prop_vpos1}
	\gamma(\mathrm dx) =\sum_{r=1}^\infty \frac{(-1)^{r-1}}{r} \left(\frac{1-p_0}{p_0}\right)^r \bar{F}^{*r}(\mathrm dx). 	
\end{equation}
\end{proposition}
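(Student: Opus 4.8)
The plan is to reduce the statement to the classical Mercator series for the logarithm, transported to the level of measures through the multiplicativity of the Laplace transform under convolution. This is the continuous counterpart of Corollary \ref{cor-coef}, with the probability generating function replaced by $L$ and the coefficient recursion replaced by convolution powers of $\bar F$.

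First I would split the law of $X$ according to its atom at the origin, writing $F(\mathrm dx)= p_0\,\delta_0(\mathrm dx)+(1-p_0)\,\bar F(\mathrm dx)$, where $\bar F$ is a genuine probability measure concentrated on $(\R_+^d)_*$. Denoting by $\widehat{\bar F}(a)=\int_{(\R_+^d)_*} e^{-\tr a x}\,\bar F(\mathrm dx)$ its Laplace transform, which satisfies $0\le\widehat{\bar F}(a)\le 1$ with $\widehat{\bar F}(0)=1$, I obtain $L(a)=p_0+(1-p_0)\widehat{\bar F}(a)=p_0\bigl(1+\tfrac{1-p_0}{p_0}\widehat{\bar F}(a)\bigr)$. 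Taking logarithms gives $\log L(a)=\log p_0+\log\bigl(1+\tfrac{1-p_0}{p_0}\widehat{\bar F}(a)\bigr)$, which already isolates the affine term $\log p_0$ appearing in \eqref{id-rep}.

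Next I would expand the second logarithm by the series $\log(1+u)=\sum_{r\ge 1}\frac{(-1)^{r-1}}{r}u^r$ with $u=\tfrac{1-p_0}{p_0}\widehat{\bar F}(a)$, and use the identity $\widehat{\bar F}(a)^r=\int_{(\R_+^d)_*} e^{-\tr a x}\,\bar F^{*r}(\mathrm dx)$, valid because the Laplace transform of an $r$-fold convolution is the $r$-th power and because $\bar F^{*r}$ is again a probability measure concentrated on $(\R_+^d)_*$ (a sum of $r$ nonzero nonnegative vectors is nonzero). Interchanging summation and integration then identifies the bracketed integral in \eqref{id-rep} with $\int e^{-\tr a x}\gamma(\mathrm dx)$ for the signed measure $\gamma$ of \eqref{prop_vpos1}. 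Evaluating at $a=0$, where $\widehat{\bar F}(0)=1$, collapses the series to $\log\bigl(1+\tfrac{1-p_0}{p_0}\bigr)=-\log p_0$, which is both the total mass $\gamma((\R_+^d)_*)$ and a consistency check. Uniqueness follows from the injectivity of the Laplace transform on finite signed measures on $(\R_+^d)_*$: any $\gamma'$ satisfying \eqref{id-rep} has the same Laplace transform as $\gamma$ and hence coincides with it.

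The main obstacle is the rigorous justification of the interchange of the infinite sum and the integral, that is, establishing that \eqref{prop_vpos1} genuinely defines a \emph{finite} signed measure. Since $\lVert\bar F^{*r}\rVert_{TV}=1$, the total variation of the partial sums is controlled by $\sum_{r\ge 1}\frac1r\bigl(\tfrac{1-p_0}{p_0}\bigr)^r$, which converges absolutely precisely when $\tfrac{1-p_0}{p_0}<1$; in that regime dominated convergence legitimizes every term-by-term manipulation above and $\gamma$ is a bona fide finite signed measure. The borderline and small-$p_0$ cases are the delicate part: there one must either argue that the relevant evaluations occur where $\tfrac{1-p_0}{p_0}\widehat{\bar F}(a)<1$, or define $\gamma$ directly as the unique signed measure whose Laplace transform equals $\log\bigl(L(a)/p_0\bigr)$ and then recover \eqref{prop_vpos1} as a convergent expansion on that range, invoking uniqueness to propagate the representation.
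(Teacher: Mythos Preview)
Your argument and the paper's rest on the same backbone---split off the atom at $0$, take logarithms, invoke the Mercator series, and match powers of the Laplace transform with convolution powers of $\bar F$---but they diverge exactly at the point you flag as delicate. You factor $L(a)=p_0\bigl(1+\tfrac{1-p_0}{p_0}\widehat{\bar F}(a)\bigr)$ and expand the logarithm of the bracket; the perturbation $\tfrac{1-p_0}{p_0}\widehat{\bar F}(a)$ can exceed $1$ when $p_0\le 1/2$, which is precisely the obstruction you leave open. The paper instead writes $L(a)=1+(1-p_0)\bigl(\bar L(a)-1\bigr)$, so that the perturbation lies in $(-(1-p_0),0]\subset(-1,0]$ for \emph{every} $a\in\R_+^d$ and every $p_0\in(0,1)$, and the Mercator expansion is unconditionally convergent. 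It then expands each $(\bar L(a)-1)^r$ by the binomial formula, swaps the two summations, and uses the identity $\sum_{r\ge j}\tfrac1r\binom{r}{j}(1-p_0)^r=\tfrac1j\bigl(\tfrac{1-p_0}{p_0}\bigr)^j$ (together with $-\sum_{r\ge1}\tfrac1r(1-p_0)^r=\log p_0$) to land on exactly your series $\sum_{j\ge1}\tfrac{(-1)^{j-1}}{j}\bigl(\tfrac{1-p_0}{p_0}\bigr)^j\bar L(a)^j$. That algebraic detour through a guaranteed-convergent expansion is the paper's answer to your ``borderline and small-$p_0$'' concern.

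For the finiteness of $\gamma$ as a signed measure, your route via the total-variation bound $\sum_r r^{-1}\bigl(\tfrac{1-p_0}{p_0}\bigr)^r$ is again limited to $p_0>1/2$. The paper takes a different tack: it writes $\gamma=\gamma_1-\gamma_2$ with $\gamma_1,\gamma_2$ the nonnegative measures collecting the odd-$r$ and even-$r$ terms respectively, and controls them on coordinate boxes $B=\prod_j[0,b_j]$ through the inequality $\bar F^{*r}(B)\le \bar F(B)^r$ (a sum of nonnegative vectors lies in $B$ only if each summand does). This gives $\gamma_1(B)\le\sum_{j\ge1}\tfrac1{2j-1}\bigl(\tfrac{1-p_0}{p_0}\bar F(B)\bigr)^{2j-1}$, and the total mass $\gamma\bigl((\R_+^d)_*\bigr)=-\log p_0$ is read off from the series. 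Your treatment of uniqueness (injectivity of the Laplace transform on finite signed measures) is an addition the paper does not spell out.
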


\begin{proof}
	Since  $\bar{F}(\mathrm dx)=F(\mathrm dx)/(1-p_0)$ is a probability distribution on $(\R^d_+)_*$ with Laplace transform $\bar{L}(a)$, the convolution rule applies and yields:  
	$$\bar{L}^r(a)=\left(\int_{(\R^d_+)_*} e^{-\tr ax}\bar{F}(\mathrm dx)\right)^r=\int_{(\R^d_+)_*} e^{-\tr ax}\bar{F}^{(*)r}(\mathrm dx),$$ such that	
\begin{equation}	
\log(	L(a))	= \log\left(p_0+ \int_{(\R^d_+)_*} e^{-\tr ax}F(\mathrm dx) \right)\\
=	 \log\left(1+   (1-p_0)\left(\bar{L}(a) -1\right)\right).
\end{equation}
Since $ \left|(1-p_0)\left(\bar{L}(a) -1\right) \right| <1$, the series expansion  yields 
\begin{eqnarray*}
\log(L(a))&=&
  \sum_{r=1}^\infty \frac{(-1)^{r-1}}{r} (1-p_0)^r  \left( C^r_0 (-1)^r +\sum_{j=1}^r C^r_j (-1)^{r-j}   \bar{L}^j(a) \right)  \\
&=& -\sum_{r=1}^\infty \frac{1}{r} (1-p_0)^r + 
\sum_{j=1}^\infty  
\left(\sum_{r=j}^\infty \frac{1}{r} (1-p_0)^r C^r_j \right)  (-1)^{j-1} \bar{L}^j(a). 
\end{eqnarray*}

\noindent Taking into account the equality $(1-x)^{-(j+1)}=\sum_{r=j}^\infty C^r_j x^{r-j}$, we get 
\begin{eqnarray*}
\log(L(a)) &= & \log(p_0) + \sum_{j=1}^\infty  
 \frac{(-1)^{j-1}}{j} \left(\frac{1-p_0}{p_0}\right)^j \bar{L}^j(a)  \\
&=& \log(p_0) +   \int_{(\R^d_+)_*} e^{-\tr ax} \left(\sum_{j=1}^\infty \frac{(-1)^{j-1}}{j} \left(\frac{1-p_0}{p_0}\right)^j    \bar{F}^{(*)j}(\mathrm dx) \right).
\end{eqnarray*}

\noindent Now, we observe that \[\gamma(\mathrm dx)\equiv \sum_{j=1}^\infty \frac{(-1)^{j-1}}{j}\left(\frac{1-p_0}{p_0}\right)^j \bar{F}^{(*)j}(\mathrm dx)\]  can be written as the difference of two  non-negative measures:
\begin{eqnarray*}
    \gamma_1(\mathrm dx)&=&
\sum_{j=1}^\infty \frac{1}{2j-1}\left(\frac{1-p_0}{p_0}\right)^{2j-1} \bar{F}^{(*)(2j-1)}(\mathrm dx)\\
\gamma_2(\mathrm dx)&=&\sum_{j=1}^\infty \frac{1}{2j}\left(\frac{1-p_0}{p_0}\right)^{2j} \bar{F}^{(*)2j}(\mathrm dx).
\end{eqnarray*}

\noindent Since $\bar{F}^{(*)j}$ is for each $j\geq 1$ a probability measure on $(\R^d_+)_*$, 
we  can deduce that $\gamma_1$ and $\gamma_2$ are non negative possibly infinite measures. 

It remains to show that $\gamma_1$ and $\gamma_2$ are finite.  In this aim, it is sufficient to  consider any Borelian interval product $B=\prod_{j=1}^d [0,b_j] $ with $b_j\geq 0$ and to observe that   $\bar{F}^{(*)r}(B)\leq \bar{F}^r(B)$.
Therefore, 
$\gamma_1(B)\leq \sum_{j=1}^\infty\frac{1}{2j-1}\left(\frac{1-p_0}{p_0} \bar{F} (B) \right)^{2j-1} $
 and so   $\gamma$ is a $\sigma-$additive signed measure on $(\R^d_+)_*$. Moreover $\gamma((\R^d_+)_*)=\sum_{r=1}^\infty \frac{(-1)^{r+1}}{r}\left(\frac{1-p_0}{p_0}\right)^r= \log(1+\frac{1-p_0}{p_0} )=-\log(p_0)$.

\end{proof}
 

\subsection{Infinite Divisibility Property}

First, we recall the most achieved formulation of the L\'evy Kinchine representation theorem for infinitely divisible random vector $X$ with non-negative components, i.e. $P(X \in \R_+^d)=1$, see \cite{loeve1977elementary}. 

\begin{theorem}\label{thm-id1}
Let $X$ be a multivariate random variable with values on $\R_+^d$ and Laplace transform $L(a)=\mathbf E(e^{-\tr aX}), a \in \R_+^d$. Then $X$  is infinitely divisible if and only if there exist a unique $\beta \in \R_+^d$ and a $\sigma$-finite measure $\gamma$ on $\R_+^d$ such that $\gamma(\{0\})=0$, $\int_{\R_+^d} (\|x\| \wedge 1)\gamma(\mathrm dx) < \infty $ and 
\begin{equation} \label{id-rep}
	L(a)=\exp\left(  -\tr a\beta +\int_{(\R_+^d)_*} (e^{-\tr a x} -1)\gamma(\mathrm dx)  \right).
	\end{equation}
\end{theorem}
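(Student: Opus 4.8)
The plan is to prove the two implications separately, handling the ``if'' direction by an explicit construction and the ``only if'' direction by a divisibility-and-limit argument that leans on Proposition \ref{prop_vpos}. Throughout I use that $X\geq 0$ forces $L(a)=\mathbf E(e^{-\tr aX})>0$ for every $a\in\R_+^d$, so that $\log L$ is everywhere finite; moreover $L$ is continuous, equals $1$ at $a=0$ and is nonincreasing in each coordinate, whence $\log L\le 0$.

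For sufficiency, suppose $L$ is given by the stated formula with some $(\beta,\gamma)$ satisfying the hypotheses. First I would truncate the Lévy measure: put $\gamma_\varepsilon=\gamma\,\mathbf 1_{\{\|x\|>\varepsilon\}}$, which is finite by the integrability assumption. The function $a\mapsto\exp\big(-\tr a\beta+\int_{\{\|x\|>\varepsilon\}}(e^{-\tr ax}-1)\,\gamma(\mathrm dx)\big)$ is then recognised as the Laplace transform of the sum of the deterministic vector $\beta$ and an independent compound Poisson vector with jump intensity $\gamma_\varepsilon((\R_+^d)_*)$ and jump law $\gamma_\varepsilon/\gamma_\varepsilon((\R_+^d)_*)$ on $(\R_+^d)_*$; this is a genuine Laplace transform of a probability law on $\R_+^d$. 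Letting $\varepsilon\downarrow0$ and using $|e^{-\tr ax}-1|\le(\tr ax)\wedge 2\lesssim \|x\|\wedge1$ together with $\int(\|x\|\wedge1)\,\gamma(\mathrm dx)<\infty$, dominated convergence gives pointwise convergence of these transforms to $L$. The continuity theorem for Laplace transforms identifies $L$ as the transform of a probability law on $\R_+^d$, and infinite divisibility is immediate since $L^{1/n}$ has the same form with $(\beta/n,\gamma/n)$.

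For necessity, assume $X$ infinitely divisible and write $L=L_n^{\,n}$, where $L_n=L^{1/n}$ is the transform of a law $F_n$; since the essential infimum of each coordinate of $X$ equals $n$ times that of an i.i.d.\ factor, each $F_n$ lives on $\R_+^d$. Because $L>0$, $L_n\to1$ pointwise, which forces $F_n\Rightarrow\delta_0$ and in particular $p_0^{(n)}:=F_n(\{0\})\to1$; excluding the trivial case $X\equiv0$ (where $\gamma=0$), we have $0<p_0^{(n)}<1$ for $n$ large. Applying Proposition \ref{prop_vpos} to $F_n$ furnishes a finite signed measure $\gamma_n$ on $(\R_+^d)_*$ of total mass $-\log p_0^{(n)}$; since its statement gives $\log L_n(a)=\log p_0^{(n)}+\int e^{-\tr ax}\,\gamma_n(\mathrm dx)$ and $\log p_0^{(n)}=-\gamma_n((\R_+^d)_*)$, this rewrites as $\log L_n(a)=\int(e^{-\tr ax}-1)\,\gamma_n(\mathrm dx)$. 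Multiplying by $n$ yields the exact identity
\begin{equation*}
\log L(a)=\int_{(\R_+^d)_*}(e^{-\tr ax}-1)\,\mu_n(\mathrm dx),\qquad \mu_n:=n\,\gamma_n,
\end{equation*}
valid for every $n$ and every $a\in\R_+^d$; note that as $p_0^{(n)}\to1$ the factor $(1-p_0^{(n)})/p_0^{(n)}\to0$, so the higher-order terms in the series of Proposition \ref{prop_vpos} are negligible and the negative part of $\mu_n$ vanishes in the limit.

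It remains to pass to the limit in $n$. Since $-\log L\ge0$, I would show the measures $(\|x\|\wedge1)\,\mu_n(\mathrm dx)$ are tight on $(\R_+^d)_*$, extract a vaguely convergent subsequence with nonnegative limit $\gamma$, and separately track the mass of $\mu_n$ escaping to the origin, where $e^{-\tr ax}-1\sim-\tr ax$, to produce the drift $\beta\in\R_+^d$ as the limit of $\int_{\{\|x\|\le\varepsilon\}}x\,\mu_n(\mathrm dx)$. Passing to the limit in the displayed identity then gives $\log L(a)=-\tr a\beta+\int_{(\R_+^d)_*}(e^{-\tr ax}-1)\,\gamma(\mathrm dx)$ together with $\int(\|x\|\wedge1)\,\gamma(\mathrm dx)<\infty$. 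Uniqueness of $(\beta,\gamma)$ follows from injectivity of the Laplace transform: $\beta$ is recovered as the coordinatewise linear growth rate of $-\log L(a)$ as $a\to\infty$, after which $\gamma$ is the unique measure whose transform matches the remaining part. The main obstacle is precisely this last step, namely the simultaneous control of the vague convergence of $\mu_n$ away from $0$ and of the escaping near-origin mass that condenses into $\beta$; the tightness-and-splitting estimate there is the heart of the matter, whereas the algebraic reduction via Proposition \ref{prop_vpos} is routine.
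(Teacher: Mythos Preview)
The paper does not actually prove this theorem: it is stated as a recalled classical result and attributed to \cite{loeve1977elementary}, with no argument given. So there is no ``paper's own proof'' to compare against; your write-up is an attempt at the classical L\'evy--Khintchine representation on $\R_+^d$.

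That said, your necessity argument contains a genuine gap. You assert that $L_n\to 1$ pointwise forces $F_n\Rightarrow\delta_0$ and ``in particular $p_0^{(n)}:=F_n(\{0\})\to 1$''. The first implication is fine, but the second is false: weak convergence to $\delta_0$ says nothing about the atom at $0$, since $\{0\}$ is not a $\delta_0$-continuity set. A concrete counterexample in $d=1$ is $X$ Gamma-distributed with shape $k$, so that $L(a)=(1+a)^{-k}$ and $L_n(a)=(1+a)^{-k/n}$; the $n$-th root law is Gamma with shape $k/n$, which is absolutely continuous and has $p_0^{(n)}=0$ for every $n$. Hence Proposition~\ref{prop_vpos}, which requires $0<F(\{0\})<1$, is simply not applicable to these factors, and the reduction you rely on collapses at the very first step.

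Even in situations where $p_0^{(n)}>0$ does hold, Proposition~\ref{prop_vpos} only yields a \emph{signed} measure $\gamma_n$, so $\mu_n=n\gamma_n$ is signed as well. Your remark that ``the negative part of $\mu_n$ vanishes in the limit'' because $(1-p_0^{(n)})/p_0^{(n)}\to0$ is not a proof: you multiply by $n$, and controlling $n$ times the alternating tail of the series in \eqref{prop_vpos1} uniformly in the space variable is exactly the hard analytic point. The standard route avoids both issues by working directly with the \emph{positive} measures $nF_n$ (or $n(1-L_n)$) and a compactness/tightness argument on $(\R_+^d)_*$, rather than passing through the log-expansion of Proposition~\ref{prop_vpos}.
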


Nevertheless, one can say a little more for non negative integer valued random vectors.

\begin{theorem} \label{thm-id2}
	Let $X$ be a multivariate random variable with values in $\N^d$ with $P(X=0)=p_0 >0$ and Laplace transform $L(a)=\mathbf E(e^{-\tr aX}), \ a \in \R_+^d$. Then $X$  is infinitely divisible if and only if there exists a (unique) finite measure $\gamma(\mathrm dx)= \sum_{l\in (\N^d)_* }\alpha_l \delta_l(\mathrm dx)$  on $(\N^d)_*$,   satisfying $ \sum_{l \in (\N^d)_*} \alpha_l  =-\mathrm{log}(p_0)$ and such that 
	$$L(a)=p_0\exp\left( \sum_{l\ne 0} e^{-\tr a l} \alpha_l  \right),$$
	where the coefficients $\alpha_l$, $l\neq 0$  satisfy Equation \eqref{coef2} with $\beta_k= p_k/p_0$.
\end{theorem}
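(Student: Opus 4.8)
The plan is to observe that, for any $\N^d$-valued $X$ with $p_0 > 0$, the representation $L(a) = p_0\exp(\sum_{l\neq 0} e^{-\tr a l}\alpha_l)$ is \emph{not} special to the infinitely divisible case: it is simply Proposition \ref{prop_vpos} read on the lattice. Indeed, when the law $F$ of $X$ charges only $\N^d$, the normalized measure $\bar F$ and all its convolutions $\bar F^{(*)r}$ are supported on $(\N^d)_*$, so the finite signed measure $\gamma$ furnished by \eqref{prop_vpos1} is automatically of the form $\gamma = \sum_{l\neq 0}\alpha_l\delta_l$; expanding $\bar F^{(*)r}$ and collecting the mass at each $l$ recovers precisely the coefficients of Equation \eqref{coef2} with $\beta_k = p_k/p_0$, which also identifies these $\alpha_l$ with those of Corollary \ref{cor-coef}. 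Thus the existence, uniqueness, lattice support, total mass $\sum_{l\neq 0}\alpha_l = -\log p_0$, and the stated formula for $L$ all come for free. The only genuine content of the theorem is therefore the equivalence: \emph{$X$ is infinitely divisible if and only if every $\alpha_l$ is non-negative}, i.e.\ $\gamma$ is a true measure and not merely a signed one.

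For the ``if'' direction I would assume $\alpha_l \geq 0$ for all $l \neq 0$. Then $\gamma = \sum_{l\neq 0}\alpha_l\delta_l$ is a finite non-negative measure on $(\N^d)_* \subset (\R_+^d)_*$ with $\gamma(\{0\}) = 0$ and $\int(\|x\|\wedge 1)\gamma(\mathrm dx) \leq \gamma((\R_+^d)_*) = -\log p_0 < \infty$. Using $\sum_{l\neq 0}\alpha_l = -\log p_0$ to absorb the constant, the formula rewrites as $L(a) = \exp(\int_{(\R_+^d)_*}(e^{-\tr a x} - 1)\gamma(\mathrm dx))$, which is exactly the L\'evy--Khinchine form of Theorem \ref{thm-id1} with drift $\beta = 0$. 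Theorem \ref{thm-id1} then delivers infinite divisibility directly (equivalently, one may recognize $X$ as the compound Poisson vector $\sum_{l\neq 0} l\,P_l$ with independent $P_l \sim \mathrm{Poisson}(\alpha_l)$, which is manifestly infinitely divisible).

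For the ``only if'' direction, assuming $X$ infinitely divisible, I would apply Theorem \ref{thm-id1} to obtain a drift $\beta \in \R_+^d$ and a non-negative $\sigma$-finite L\'evy measure $\gamma'$ with $L(a) = \exp(-\tr a\beta + \int(e^{-\tr a x} - 1)\gamma'(\mathrm dx))$, and then match it with the representation above. This matching is the crux and proceeds by examining the boundary $z = e^{-a}\to 0$ (all coordinates $a_j\to\infty$): since $L(a) = \mathbf E(z^X) \to p_0 > 0$ while the drift factor $\prod_j z_j^{\beta_j}$ tends to $0$ unless $\beta = 0$ and the integral factor stays $\leq 1$, one is forced to conclude $\beta = 0$; monotone convergence in $\int(e^{-\tr a x}-1)\gamma'(\mathrm dx)\to -\gamma'((\R_+^d)_*)$ then yields $\gamma'((\R_+^d)_*) = -\log p_0 < \infty$, so $\gamma'$ is finite. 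At this point both $\gamma$ and $\gamma'$ are finite (signed) measures on $(\R_+^d)_*$ of total mass $-\log p_0$ giving the same $L(a) = \exp(\log p_0 + \int e^{-\tr a x}\mu(\mathrm dx))$, so the uniqueness clause of Proposition \ref{prop_vpos} forces $\gamma' = \gamma$. Since $\gamma' \geq 0$, every $\alpha_l = \gamma(\{l\}) \geq 0$, as required.

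I expect the boundary analysis at $a\to\infty$---simultaneously eliminating the drift and proving finiteness of the L\'evy measure, so that Proposition \ref{prop_vpos}'s uniqueness can be invoked---to be the main obstacle; the purely lattice/combinatorial bookkeeping (matching \eqref{prop_vpos1} with \eqref{coef2}) and the degenerate case $p_0 = 1$, where $X \equiv 0$ and $\gamma = 0$, are routine.
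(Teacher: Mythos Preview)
Your proposal is correct and follows essentially the same route as the paper: the representation with signed coefficients $\alpha_l$ is obtained from the preliminary results (you via Proposition~\ref{prop_vpos} specialized to the lattice, the paper via Corollary~\ref{cor-coef} directly), and the heart of the argument---sending $a\to\infty$ to kill the drift $\beta$ and force the L\'evy measure to be finite, then identifying it with the $\alpha$-measure by uniqueness---is identical. Your write-up is in fact slightly more complete than the paper's, since you spell out the ``if'' direction (via Theorem~\ref{thm-id1} with $\beta=0$) which the paper leaves implicit.
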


\begin{proof}
	Since $L(a)$ can be written as
$g(z)= p_0 + \sum_{k \ne 0 } z^k p_k$ with 	$z=(e^{-a_1}, \ldots, e^{-a_d})$ and $p_0>0$,
then  according to Corollary \ref{cor-coef}, we have a unique function $f(z)= \sum_{l \in \mathbb{N}^d} \alpha_l z^l) $ satisfying $g(z)= e^{f(z)}$ with $\alpha_0=\log(p_0)$ and the $\alpha_l$ satisfying Equation \eqref{coef2} with $\beta_k= p_k/p_0$.

We now prove that in the representation \eqref{id-rep}, $\beta=0 $ and $\gamma$ is finite. 

Indeed, if we take $a=(u,\ldots,u) \in \R_+^d$, then 
for all  $k \in (\N^d)_*$, we have 
$0 \leq e^{- \tr a k}= e^{-u |k|} \leq e^{-u} $ and then 
$p_0 \leq L(a) \leq p_0 + e^{-u} (1-p_0) \underset{u \to +
	\infty}{\longrightarrow } p_0$.

On the other hand, for $x\neq 0$, we have $0\leq 1 -e^{-u|x|} \underset{u \to +
	\infty}{\uparrow} 1$; hence, by Beppo Levi's lemma we have, \eqref{id-rep} yields 
$L(a) \underset{u \to +
	\infty}{\longrightarrow } \exp\left(  -\infty|\beta|  - \gamma((\R_+^d)_*) \right)=p_0 >0$.
But this holds true only for $\beta=0$ and $-\log(p_0)=\gamma((\R_+^d)_*) <\infty$.
This also means that $\gamma$ is a finite measure.

Moreover, since $f$ can be written $f((e^{-a_1},\ldots, e^{-a_d} ))= \log(p_0)+ \int_{(\N^d)_*} e^{-\tr a x} \alpha(\mathrm dx)$, with  $ \alpha(\mathrm dx) =\sum_{l\neq 0} \alpha_l \delta_l (\mathrm dx)$ and that  
the measure  $\gamma(\mathrm dx)$ is unique, we simply have $\gamma(\mathrm dx)=\alpha(\mathrm dx)$, whose support is $(\N^d)_*$.
\end{proof}

\paragraph{Remark.}
	If  $X$ is indefinitely divisible and $\N^d$ valued, we always have 
	$\mathbf E(x) = \sum_{k\neq 0} k \alpha_k$ and $Cov(X)= \sum_k k\tr k \alpha_k$ for  both expressions, either finite or infinite.


\subsection{ Distribution Probabilities of Hawkes Process Countings}
Without loss of generality, let us  assume for the indefinitely divisible Hawkes processes $\tilde{N}^{j'}(t),\  j'=0:d$, that $\lambda_0(s) $ and $\phi$ are  integrable over any interval $[0,t]$, and satisfy $\tilde{p}^{j'}_0(t) >0$ and $\tilde{p}^j_0(t) >0$ for  all $t>0$.

\begin{proposition}\label{counts-prob}
	Let  $\tilde{N}^{j'}, \ j'=0:d$, be multivariate Hawkes processes driven by integrable functions $(\phi^{j'}_\bu, \phi)$ and let $ \tilde{p}^{j'}_k(t)=P_{(\phi^{j'}_\bu , \phi)}(\tilde{N}^{j'}(t)=k),  k\in \N^d$, the probability distribution functions of $\tilde{N}^{j'}(t) , \ j'=0:d, t\in \R_+$.
	Then we have 
	\begin{equation}\label{prob-recur}
		\frac{\tilde{p}^{j'}_l}{ \tilde{p}^{j'}_0}(t) -\sum_{r=2}^{|l|} \frac{ (-1)^{r}}{r}\sum_{\substack{k^{(1)}\ne 0,\ldots, k^{(r)}\neq 0\\ k^{(1)}+\ldots+ k^{(r)}= l}}
	\frac{\tilde{p}^{j'}_{k^{(1)}}}{\tilde{p}^{j'}_0}(t)\times \ldots \times \frac{\tilde{p}^{j'}_{k^{(r)}}}{\tilde{p}^{j'}_0}(t)=
	\sum_{j=1}^d
	\int_{0}^{t}  
	\left(\tilde{p}^{j}_{l-e_j} (t-s) \right) \ 
	\phi^{j'}_{j}(s)\mathrm ds.
\end{equation}	
\end{proposition}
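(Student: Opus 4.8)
The plan is to translate the single-time Laplace transform identity of Theorem \ref{thm:1} into a functional equation for the probability generating functions, take its logarithm via Corollary \ref{cor-coef}, and then identify coefficients of $z^l$. First I would specialize Theorem \ref{thm:1} to a single time ($n=1$, $t_1=t$), which yields
\[
\tilde{L}^{j'}(a,t)=\tilde{p}^{j'}_0(t)\exp\Big(\sum_{j=1}^d\int_0^t e^{-a_j}\,\tilde{L}^j(a,t-s)\,\phi^{j'}_j(s)\,\mathrm ds\Big).
\]
Substituting $z_j=e^{-a_j}$ converts each Laplace transform into the probability generating function $g^{j'}(z,t)=\mathbf{E}(z^{\tilde{N}^{j'}(t)})=\sum_{k\in\N^d}\tilde{p}^{j'}_k(t)\,z^k$, giving the PGF functional equation
\[
g^{j'}(z,t)=\tilde{p}^{j'}_0(t)\exp\Big(\sum_{j=1}^d\int_0^t z_j\,g^j(z,t-s)\,\phi^{j'}_j(s)\,\mathrm ds\Big).
\]

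Since $\tilde{p}^{j'}_0(t)>0$ by assumption, Corollary \ref{cor-coef} provides a unique analytic $f^{j'}(z,t)=\sum_l\alpha^{j'}_l(t)\,z^l$ with $g^{j'}=e^{f^{j'}}$, $\alpha^{j'}_0=\log\tilde{p}^{j'}_0$, and the coefficients $\alpha^{j'}_l$ for $l\neq 0$ given by Equation \eqref{coef2} with $\beta_k=\tilde{p}^{j'}_k/\tilde{p}^{j'}_0$. Taking the logarithm of the PGF equation gives $f^{j'}(z,t)=\log\tilde{p}^{j'}_0(t)+\sum_{j=1}^d\int_0^t z_j\,g^j(z,t-s)\,\phi^{j'}_j(s)\,\mathrm ds$. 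I would then expand the right-hand side: writing $g^j(z,t-s)=\sum_m\tilde{p}^j_m(t-s)\,z^m$ and using $z_j z^m=z^{m+e_j}$, the shift $l=m+e_j$ turns the inner expression into $\sum_{l:\,l_j\ge 1}\tilde{p}^j_{l-e_j}(t-s)\,z^l$. Adopting the convention $\tilde{p}^j_m=0$ for $m\notin\N^d$ (so the $j$ with $l_j=0$ drop out automatically), matching the coefficient of $z^l$ for each $l\neq 0$ yields
\[
\alpha^{j'}_l(t)=\sum_{j=1}^d\int_0^t \tilde{p}^j_{l-e_j}(t-s)\,\phi^{j'}_j(s)\,\mathrm ds,
\]
which is exactly the right-hand side of the claim.

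It remains only to recognize the left-hand side. Substituting the explicit value of $\alpha^{j'}_l$ from Equation \eqref{coef2}, I would isolate the $r=1$ summand, which equals $\beta_l=\tilde{p}^{j'}_l/\tilde{p}^{j'}_0$, from the terms with $r\ge 2$, and use $(-1)^{r-1}/r=-(-1)^r/r$ to rewrite the remaining sum. This gives
\[
\alpha^{j'}_l=\frac{\tilde{p}^{j'}_l}{\tilde{p}^{j'}_0}(t)-\sum_{r=2}^{|l|}\frac{(-1)^r}{r}\sum_{\substack{k^{(1)}\ne 0,\ldots,k^{(r)}\ne 0\\ k^{(1)}+\ldots+k^{(r)}=l}}\frac{\tilde{p}^{j'}_{k^{(1)}}}{\tilde{p}^{j'}_0}(t)\times\cdots\times\frac{\tilde{p}^{j'}_{k^{(r)}}}{\tilde{p}^{j'}_0}(t),
\]
which is precisely the left-hand side of \eqref{prob-recur}, completing the identification.

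The main obstacle will be the bookkeeping of the coefficient extraction, namely justifying that these power-series identities hold on a genuine polydisc neighborhood of the origin so that matching coefficients is legitimate. This follows because each $g^{j'}(\cdot,t)$ converges for $|z_j|\le 1$ and $g^{j'}(0,t)=\tilde{p}^{j'}_0(t)>0$, so $f^{j'}=\log g^{j'}$ is analytic near $0$ and Corollary \ref{cor-coef} applies. A secondary technical point is the interchange of the summation over $m$ with the integral over $s$ in expanding the right-hand side, which is permissible under the assumed integrability of $\phi$ over $[0,t]$ together with dominated (or monotone) convergence, the coefficients $\tilde{p}^j_m(t-s)$ being probabilities bounded by one.
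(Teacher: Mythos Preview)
Your proof is correct and follows essentially the same route as the paper: specialize Theorem~\ref{thm:1} to a single time, take logarithms of the resulting exponential identity, expand the right-hand side as a power series in $z=e^{-a}$ with the index shift $l=m+e_j$, and match coefficients against the expansion of $\log g^{j'}$ given by Equation~\eqref{coef2}. The only cosmetic difference is that the paper justifies the log expansion by invoking the infinite divisibility result (Theorem~\ref{thm-id2}) rather than Corollary~\ref{cor-coef} directly, but since Theorem~\ref{thm-id2} is itself proved via Corollary~\ref{cor-coef}, the two arguments are the same in substance.
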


\begin{proof}
First, let us deal with $j'=0$ and then observe that equations of Theorem \ref{thm:1} reduce to the following forms for a single time $t>0$  and $j' \in \{1,\ldots d\} $: 
\begin{equation}
	\tilde{L}^{j'(1)}_{\phi^{j'}_{\bu}} \left(a,  t\right)=
	\tilde{p}^{j'}_0(t) \exp\left(\sum_{j=1}^d
	\int_{0}^{t} 
	 e^{- a_j} \tilde{L}^{j,(1)}(a,  (t-s)) \ 
	\phi^{j'}_{j}(s)\mathrm ds
	\right).
\end{equation}

Since   $ \tilde{N}^{j}, \ j=0:d,$ are infinitely divisible, then according to  Theorem \ref{thm-id2}, there exist finite measures $\gamma^{j'}(t,\mathrm dx))= \sum _{k \in (\N^d)_*} \alpha^{j'}_k(t)\delta_k (\mathrm dx)$, for all $t>0$ and  $j'=0:d$,  such that 
\begin{eqnarray*}
\sum_{l\ne 0} e^{-\tr a l}\alpha^{j'}_l(t)
	&=&\sum_{j=1}^d
	\int_{0}^{t} 
	e^{- a_j} \tilde{L}^{j,(1)}(a,  (t-s)) \ 
	\phi^{j'}_j(s)\mathrm ds \\
   &=& \sum_{j=1}^d
\int_{0}^{t} 
e^{- a_j} \left( \sum_{l \in \N^d} e^{-\tr a l} \tilde{p}^{j}_l (t-s) \right) \ 
	\phi^{j'}_j(s)\mathrm ds \\
&=&      \sum_{l \in \N^d} e^{-\tr a l} \sum_{j=1}^d
\int_{0}^{t}  
e^{- a_j} \left(\tilde{p}^{j}_l (t-s) \right) \ 
	\phi^{j'}_j(s)\mathrm ds\\
&=&   \sum_{l \ne 0} e^{-\tr a l} \left(\sum_{j=1}^d
\int_{0}^{t}  
 \left(\tilde{p}^{j}_{l-e_j} (t-s) \right) \ 
	\phi^{j'}_j(s)\mathrm ds\right)
\end{eqnarray*}
with $e_j$ being the $j^{th}$  unit vector of $\R^d$.
Therefore we have $\alpha^{j'}_l(t)=\sum_{j=1}^d
\int_{0}^{t}  \tilde{p}^{j}_{l-e_j} (t-s)  \ \phi^{j'}_j(s)\mathrm ds$.
 Then, according to Equation \eqref{coef2}, we obtain the following recursive formula for $l\ne 0$ and $j'=0:d$:
$$\frac{\tilde{p}^{j'}_l}{ \tilde{p}^{j'}_0}(t) -\sum_{r=2}^{|l|} \frac{ (-1)^{r}}{r}\sum_{\substack{k^{(1)}\ne 0,\ldots, k^{(r)}\neq 0\\ k^{(1)}+\ldots+ k^{(r)}= l}}
\frac{\tilde{p}^{j'}_{k^{(1)}}}{\tilde{p}^{j'}_0}(t)\times \ldots \times \frac{\tilde{p}^{j'}_{k^{(r)}}}{\tilde{p}^{j'}_0}(t)=
\sum_{j=1}^d
\int_{0}^{t}  
\left(\tilde{p}^{j}_{l-e_j} (t-u) \right) \ 
\phi^{j'}_{j}(u)\mathrm du.$$

\end{proof}

\paragraph{Example.}
	Let us consider the case $d=1$ with $\phi^0=\lambda^0$ and $\phi=\phi^1$. From the population representation \eqref{id-rep}, we  first  have for $l$ equals 0 and 1:	$\tilde{p}^{j'}_0(t)=e^{ -\Phi^{j'}(t)}$ and $\tilde{p}^{j'}_1(t)=e^{ -\Phi^{j'}(t)} \Phi^{j'}(t)$.
Then for $l=2$, we get:
\begin{equation*}
 \tilde{p}^{j'}_2(t)  e^{\Phi^{j'}(t)} =\frac{1}{2} \left(  \Phi^{j'}(t)\right)^2 + \int_0^t e^{ -\Phi^1(t-u)} \Phi^1(t-u) \phi^{j'}(u)\mathrm du ,
 \end{equation*}
 and so on...

\paragraph{Remark.}
Clearly, Proposition \ref{counts-prob} can be generalized to multi-dimensional multi-temporal counts  $\tr \tilde{\bold{N}}^{j'}=(\tr \tilde{N}^{j'}(t_1), \ldots,\tr \tilde{N}^{j'}(t_n)  ) $ of dimension $d\times n$. Indeed, the probability distribution of the process $\tilde N^{j'}$ is infinitely divisible and one can adapt Equation \eqref{prob-recur} to $(\N^d)^n$ (instead of $\N^d$), providing summation formulas over increasing paths passing through the different  points $\tilde{N}^{j'}(t_r); r=1:n$.


\section{Integral Equations of the First Two Multi-temporal Moments} \label{section_int_equation_first_moments}

The Laplace transform  is an important tool that characterizes all properties of probability distributions of non negative random variables. This is also the case of the multivariate multi-time distributions of a Hawkes process 
with Laplace transform $\tilde{L}^{j',(n)}(a,t)$ satisfying Equation \eqref{Eq-tildeL}. Hence, if $a= (a^m_j : \  m=1:n, j=1:d)$ and $t=(t_1,\ldots,t_n)$, 
for any non negative integer valued set of triplets $Q=\{ (q_1,m_1,j_1), \cdots, (q_r,m_r,j_r)\}$ such that $q_k\geq 1, \ 1\leq m_k\leq n, 1\leq j_k\leq d ), \ k =1:r$ and $|q|= \sum_{k=1}^r q_k$, we have the following formula for the $Q^{th}$ moment of $\tilde{N}^{j'}, \ j'=0:d$: 
\begin{align*} M^{j',(Q)}(t)&= \E_{(\phi^{j'}_\bu, \phi)} \left( \prod_{k=1}^r [\tilde{N}^{j'}_{j_k} (t_{m_k})]^{q_k}  \right)\\
&=
   (-1)^{|q|} \left. \partial_Q \tilde{L}^{j',(n)}( a,t)    \right|_{a=0},
\end{align*}
   with 
   $$\partial_Q= \partial^{|q|}_{ (a^{m_1}_{j_1})^{q_1 } \ldots  (a^{m_r}_{j_r})^{q_r}}.$$   
Here, $Q$ represents the set $\mathcal{Q}$ of size $|q|$ containing $q_1$ copies of $a^{m_1}_{j_1} , \ \ldots,   \ q_r$ copies of $a^{m_r}_{j_r}$; hence, $\partial_Q$ can be replaced by $\partial_\mathcal{Q}$ without any risk of confusion.

 Therefore using the Fa\`a di Bruno formula for partial derivatives of 
composed functions, namely  $\tilde{L}^{j',(n)}(a,t)=\exp(\theta^{j'}(a,t))$ satisfying $\tilde{L}^{(j',n)}(0_{[d]},t)\equiv1$ for all $t, j', n$, we get

\begin{equation}
	\label{moments}
 M^{j',(Q)}(t)= (-1)^{|q|}   \left. 
 \sum _{\pi \in \mathcal{P} } \prod_{\tilde{Q}_l \in \tau }\ \partial_{\tilde{Q}_l} \theta^{j'}(a,t) \right|_{a=0},
 \end{equation}
 with   $\pi$ running  through the set $\mathcal{P}$ of all partitions $\{\tilde{\mathcal{Q}}_1,\cdots,\tilde{\mathcal{Q}}_p\}$ of the set $ \mathcal{Q}$. For example in the case of $\partial^3_{(a_1^1)^2 a^2_2}$ we have $\mathcal{Q}=\{a^1_1,a^1_1, a^2_2\}$. Moreover, as the  components $\tilde{\mathcal{Q}}_l$ of the partition $\pi$ may contain several copies of a same element $a^m_j$,  Equation \eqref{moments} simplifies a lot in the case where some of the $q_m $ are strictly greater than 1, such that the number of distinct terms then reduces to 
 $\frac{|q|!}{\prod_{k=1}^r q_k! (k!)^{q_k} }$. 
 
 In what follows, we  focus on the  derivation of formal and/or implicit equations related to the first and second order moments.


\subsection{Equations of First Order Moments}

\begin{proposition}
	Consider the set of vector means  $\tilde{M}^{j',(1)}(t)=E_{(\phi^{j'}_\bu,\phi)}\left(\tilde{N}^{j'}(t)\right), \ j'=0:d,$ and the   $d\times d$ matrix of base vector means   $\tilde{M}^{(1)}(t)=\left[\tilde{M}^{1,(1)},\cdots,\tilde{M}^{d,(1)}\right]$. Let $\Phi(t)= \int_0^t \phi(s)\mathrm ds$ and $\Lambda^0(t)=\int_0^t \lambda^0(s)\mathrm ds$.
Then,  $\tilde{M}^{(1)}$ is solution of the following matrix integral equation
    \begin{equation}\label{implicit-mean}
        \tilde{M}^{(1)} (t)
        =  \Phi(t)+   \tilde{M}^{(1)}*\phi (t),  
        \end{equation}
 that corresponds to
 \begin{equation} 
 	\tilde{M}^{j',(1)}_l (t)
 	=  \Phi^{j'}_l(t)+   \sum_{j=1}^d \int_0^t\tilde{M}^{j,(1)}_l(t-s)\phi^{j'}_j(s) \mathrm ds,  
 \end{equation}
for  $1\le j',l\leq d$.    Moreover,    
  \begin{equation} \label{implicit-mean-0}     
  \tilde{M}^{0,(1)} (t)=\Lambda^0 (t)+ \tilde{M}^{(1)}*\lambda^0 (t).
    \end{equation}
   
\end{proposition}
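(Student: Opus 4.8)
The plan is to read off each first moment by differentiating the single-time Laplace transform at $a=0$, exactly as prescribed by the general moment formula \eqref{moments} in the case $|q|=1$.

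First I would specialize Theorem \ref{thm:1} (equivalently \eqref{laplacek}) to a single time, $n=1$, and fold the prefactor $\tilde p^{j'}_0(t)=\exp(-\sum_{j=1}^d\Phi^{j'}_j(t))$ into the exponent, writing $\tilde L^{j',(1)}(a,t)=\exp(\theta^{j'}(a,t))$ with
\[
\theta^{j'}(a,t)=-\sum_{j=1}^d\Phi^{j'}_j(t)+\sum_{j=1}^d\int_0^t e^{-a_j}\,\tilde L^{j,(1)}(a,t-s)\,\phi^{j'}_j(s)\,\mathrm ds,\qquad a\in\R_+^d.
\]
Setting $a=0$ gives $\tilde L^{j',(1)}(0,t)=1$ and $\theta^{j'}(0,t)=0$, which is the normalization underlying \eqref{moments}. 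Since $|q|=1$, the only partition in the Fa\`a di Bruno sum is the trivial one, so \eqref{moments} collapses to $\tilde M^{j',(1)}_l(t)=-\partial_{a_l}\theta^{j'}(a,t)\big|_{a=0}$.

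Next I would differentiate the integrand of $\theta^{j'}$ with respect to $a_l$. The factor $e^{-a_j}$ contributes $-\delta_{jl}$ at $a=0$, while $\partial_{a_l}\tilde L^{j,(1)}(a,t-s)\big|_{a=0}=-\tilde M^{j,(1)}_l(t-s)$ by definition of the mean. Summing over $j$, the Kronecker term collapses $\sum_j\delta_{jl}\int_0^t\phi^{j'}_j(s)\,\mathrm ds$ to $\Phi^{j'}_l(t)$, and the remaining piece produces $\sum_{j=1}^d\int_0^t\tilde M^{j,(1)}_l(t-s)\,\phi^{j'}_j(s)\,\mathrm ds$; together these give the stated scalar equation. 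Reading each identity as the $(l,j')$ entry of a matrix relation --- using that the $j'$-th column of $\tilde M^{(1)}$ is $\tilde M^{j',(1)}$, that the $(l,j')$ entry of $\Phi(t)$ is $\Phi^{j'}_l(t)$, and that $(\tilde M^{(1)}*\phi)(t)=\int_0^t\tilde M^{(1)}(t-s)\phi(s)\,\mathrm ds$ has $(l,j')$ entry $\sum_j\int_0^t\tilde M^{j,(1)}_l(t-s)\phi^{j'}_j(s)\,\mathrm ds$ --- yields \eqref{implicit-mean}. Finally, repeating the differentiation with $\phi^{j'}_\bu$ replaced by $\lambda^0$ (that is, taking $j'=0$ and recalling $\phi^0_\bu=\lambda^0$) gives \eqref{implicit-mean-0}, with the matrix $\tilde M^{(1)}$ now convolved against the vector $\lambda^0$ and the diagonal term supplying $\Lambda^0(t)$.

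The only step beyond bookkeeping is justifying the differentiation under the integral sign and, upstream of it, the finiteness of the first moments. Here I would invoke the integrability of $\lambda^0$ and $\phi$ on $[0,t]$: it makes $a\mapsto\theta^{j'}(a,t)$ smooth near $a=0$ and, through the Volterra structure of \eqref{implicit-mean}, guarantees a finite, unique solution on every compact time interval. I expect this regularity and finiteness check to be the genuine obstacle; the remaining algebra is routine.
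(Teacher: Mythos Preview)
Your proposal is correct and follows essentially the same route as the paper: specialize the Laplace transform equation \eqref{laplacek} to $n=1$, differentiate once in $a_l$, and evaluate at $a=0$ using $\tilde L^{j,(1)}(0,\cdot)\equiv 1$. The only cosmetic difference is that you fold $\tilde p^{j'}_0(t)$ into the exponent and differentiate $\theta^{j'}$ directly, whereas the paper differentiates $\tilde L^{j',(1)}$ via the chain rule; your added remark on justifying differentiation under the integral is a point the paper leaves implicit.
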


\begin{proof}
Let $k=1, \ t\in \R_{+*}$ and $\tr a=(a_1,\ldots,a_d) \in (\R_+)^d$, then Equation  \eqref{laplacek} yields  for  $ j'=0:d$
\begin{equation}
	\tilde{L}^{j',(1)}\left(a,  t\right)=
	\tilde{p}^{j'}_0(t) \exp\left(\sum_{j=1}^d
	e^{- a_j} \tilde{L}^{j,(1)}(a, .) * \ 
	\phi^{j'}_{j}   (t) 
	\right) .
\end{equation}
Then, applying  Equation \eqref{moments},
we get  \[\tilde{M}^{j',(1)}(t)=-\partial^1_{a_l}\tilde{L}^{j',(1)} \left(0_{[d]},t\right).\]
Hence, for each $j'=1:d$, 
\begin{equation}
	\partial^1_{a_l}\tilde{L}^{j',(1)} \left(a,  t\right)= \tilde{L}^{j',(1)} \left(a,  t\right)\left( \sum_{j=1}^d \left( (-\mathbf 1_{l=j}) e^{- a_j} \tilde{L}^{j,(1)}(a, .) +
	e^{- a_j} \partial_{a_l}^1\tilde{L}^{j,(1)}(a, .)\right) * \phi^{j'}_{j}   
	\right)(t),
\end{equation}
and, consequently,
\begin{eqnarray*}
	\partial^1_{a_l}\tilde{L}^{j',(1)} \left(0_{[d]},  t\right)&=& \tilde{L}^{j',(1)} \left(0_{[d]},  t\right) \sum_{j=1}^d \left( (-\mathbf 1_{l=j})   \tilde{L}^{j,(1)}(0_{[d]}, .) +
	\mathbf 1\times \partial_{a_l}^1\tilde{L}^{(1)}(0_{[d]}, .)\right) * \phi^{j'}_{j}   (t)\\
	&=&  \sum_{j=1}^d \left( (-\mathbf 1_{l=j})  - \tilde{M}^{j,(1)}_l\right) * \phi^{j'}_{j} (t).
\end{eqnarray*}
since $\tilde{L}^{j',(1)} \left(0_{[d]},  t\right)\equiv 1$, for $j'=0:d$. Therefore, for all $j' = 1:d$ and $l = 1:d$ ,  we obtain
\begin{equation}
\tilde{M}^{j',(1)}_l (t)= \Phi^{j'}_l (t)+ \sum_{j=1}^d \left(\tilde{M}^{j,(1)}_l  * \phi^{j'}_{j}\right) (t).	
	\end{equation}
 
The latter formula can be rewritten under a matrix form and  replacing $\phi^{j'}_{\bu}$ with $\lambda^0$ yields the  second part of the proposition.   
\end{proof}


\subsubsection{Explicit Formulas for First Order Moments  }

Actually, the previous implicit/integral equations  for the first order moments can be developed  explicitly.

Remind that the $r^{th}$-convolution of  non-negative $d\times d$-matrix function $\phi(t)=(\phi^j_i)(t)$ on $\R_+$ is well defined recursively  as $\phi^{(*)r}=\phi^{(*)(r-1)} * \phi$, e.g.  $(\phi^{(*)2})^{j'}_j (t)=\sum_{r=1}^d \phi^r_j*\phi^{j'}_r(t)= \sum_{r=1}^d \phi^{j'}_r*\phi^r_j (t)$. We also convene that for $r=0$, we have 
$$\phi^{(*)0} (t)=\text{diag}(\delta_0(\mathrm dt),\ldots, \delta_0(\mathrm dt))=I_d \delta_0(\mathrm dt), $$ 
where  $\delta_0(\mathrm dt)$ is the Dirac distribution (or generalized function) supported by $0 \in \R_+$.

\begin{proposition}\label{mean}
Finite or infinite, the mean function $\tilde{M}^{(1)}(t)$ is written
\begin{equation}
\label{Mean}
\tilde{M}^{(1)}(t)=   \Phi*\left(\sum_{r=0}^{\infty}\phi^{(*)r}\right)(t))=   \left(\sum_{r=0}^{\infty}\phi^{(*)r}\right)*\Phi (t)
\end{equation}
with derivative 
\begin{equation}
	\tilde{m}^{(1)}(t)=   \sum_{r=1}^{\infty}\phi^{(*)r} (t).
\end{equation}\label{Mean-density}
Similarly, we have  
\begin{equation}\label{means}
\tilde{M}^{0,(1)}(t)=   \left(\sum_{r=0}^{\infty}\phi^{(*)r}\right) *\Lambda^0  (t),  \ 
\text{ with     derivative}  \  \ 
  \tilde{m}^{0,(1)}(t)=    \left(\sum_{r=0}^{\infty}\phi^{(*)r}\right)*\lambda^0 (t).
  \end{equation}

\noindent Moreover,   if  $\| \phi \|_{L^1}= \sup(\| \phi_i^j \|_{L^1}:  1\leq i,j\leq d ) \leq \alpha$ with $ d \alpha  <1$ , then $\tilde{M}^{(1)}(t) $ is finite  for  all $t\geq 0$.   
Additionally, if  $\lambda^0 \in L^1,$ then $\tilde{m}^{0,(1)} \in L^1$ with  primitive  $\tilde{M}^{0,(1)}(t)$  finite for all $t\geq 0$.

\end{proposition}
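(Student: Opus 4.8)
The plan is to solve the matrix Volterra equation \eqref{implicit-mean}, namely $\tilde{M}^{(1)} = \Phi + \tilde{M}^{(1)} * \phi$, by Picard iteration. Substituting the equation into its own right-hand side $n$ times and recalling that $\phi^{(*)0} = I_d\delta_0$ is the convolution unit, one proves by induction that
\[ \tilde{M}^{(1)} = \Phi * \sum_{r=0}^{n} \phi^{(*)r} + \tilde{M}^{(1)} * \phi^{(*)(n+1)}. \]
The two orderings appearing in \eqref{Mean} coincide because $\Phi = \phi * u$ with $u(t)=\mathbf 1_{\{t\ge 0\}}$ a \emph{scalar} kernel, so $\Phi * \phi^{(*)r} = \phi^{(*)(r+1)} * u = \phi^{(*)r} * \Phi$, scalar convolution commuting with the matrix factors. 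Since $\phi$ has non-negative entries, every term above is non-negative, the partial sums $\Phi * \sum_{r=0}^n \phi^{(*)r}$ increase with $n$ and are dominated by $\tilde{M}^{(1)}$; monotone convergence then yields a well-defined limit (finite or infinite) bounded above by $\tilde{M}^{(1)}$.

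To upgrade this inequality to the identity \eqref{Mean}, I would argue that the remainder $\tilde{M}^{(1)} * \phi^{(*)(n+1)}$ tends to $0$: on any bounded interval $[0,T]$ the operator $f \mapsto f * \phi$ is a Volterra operator of zero spectral radius, so the resolvent series $\sum_{r\ge 1}\phi^{(*)r}$ converges in $L^1(0,T)$, the remainder vanishes, and equality holds on every $[0,T]$, hence everywhere. Once \eqref{Mean} holds, the derivative formula follows by differentiating term by term: the $r=0$ term $\Phi$ differentiates to $\phi=\phi^{(*)1}$ and each $\Phi * \phi^{(*)r}=\phi^{(*)(r+1)}*u$ differentiates to $\phi^{(*)(r+1)}$, giving $\tilde{m}^{(1)} = \sum_{r\ge 1}\phi^{(*)r}$. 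The formulas \eqref{means} then follow by inserting \eqref{Mean} into \eqref{implicit-mean-0}: writing $\Phi*\lambda^0 = \phi*\Lambda^0$ collapses $\tilde{M}^{(1)}*\lambda^0$ into $(\sum_{r\ge 1}\phi^{(*)r})*\Lambda^0$, and adding $\Lambda^0=\phi^{(*)0}*\Lambda^0$ reconstitutes $(\sum_{r\ge 0}\phi^{(*)r})*\Lambda^0$; differentiation gives the stated $\tilde{m}^{0,(1)}$.

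For the global finiteness statement, the key estimate is the entrywise bound $\|(\phi^{(*)r})^{j'}_j\|_{L^1}\le d^{\,r-1}\alpha^r$, proved by induction on $r$: the case $r=1$ is the hypothesis, and the recursion $(\phi^{(*)(r+1)})^{j'}_j = \sum_{k=1}^d (\phi^{(*)r})^{j'}_k * \phi^k_j$ combined with Young's inequality $\|f*g\|_{L^1}\le\|f\|_{L^1}\|g\|_{L^1}$ multiplies the bound by $d\alpha$. Summing the geometric series gives $\sum_{r\ge 1}\|(\phi^{(*)r})^{j'}_j\|_{L^1}\le \alpha/(1-d\alpha)<\infty$ when $d\alpha<1$, so each entry of $\tilde{m}^{(1)}$ belongs to $L^1([0,\infty))$ and its primitive $\tilde{M}^{(1)}(t)$ is finite (indeed bounded) for all $t$. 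Applying the same bound to $\tilde{m}^{0,(1)}=(\sum_{r\ge 0}\phi^{(*)r})*\lambda^0$ shows $\tilde{m}^{0,(1)}\in L^1$ whenever $\lambda^0\in L^1$, so $\tilde{M}^{0,(1)}$ is finite everywhere.

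I expect the delicate point to be justifying the passage to the limit in the \emph{finite or infinite} generality, that is, promoting \eqref{Mean} from the inequality that monotone convergence gives to a genuine equality. Controlling the Volterra remainder without a smallness hypothesis requires the zero spectral radius of the convolution operator on bounded intervals (equivalently, a subinterval-splitting argument on $[0,T]$); under the additional assumption $d\alpha<1$ this obstacle disappears, since the explicit $L^1$ bound forces $\tilde{M}^{(1)} * \phi^{(*)(n+1)}\to 0$ directly and closes the identity globally.
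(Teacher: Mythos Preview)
Your proof follows essentially the same route as the paper's: iterate the integral equation \eqref{implicit-mean} using non-negativity and monotone convergence, commute $\Phi$ past $\phi^{(*)r}$ via the scalar-kernel observation $\Phi=\phi*u$, and bound the entries of $\phi^{(*)r}$ by $d^{\,r-1}\alpha^r$ through Young's inequality to sum the geometric series. The paper in fact glosses over the vanishing of the Picard remainder (it simply asserts that iteration ``gives'' the series), so your spectral-radius remark on bounded intervals is a mild tightening rather than a deviation.
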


\begin{proof}
	Since all functions $\phi^{j'}_j$ are non negative, one can expand by iteration Equation \eqref{implicit-mean} and  get  the following   increasing converging series of matrix functions (whose components may be finite or infinite):
	$$ \tilde{M}^{(1)}(t)= \Phi* \left(\sum_{r=0}^\infty \phi^{(*)r} \right) (t).$$

Moreover, observe that using differential rules for convolution yields $  \Phi^{r}_l*\phi^l_j = \phi^{r}_l*\Phi^l_j 	$, which implies  that commutativity of  matrix convolution $\Phi*\phi=\phi*\Phi$  holds in this specific case.  Iterating the process proves the commuting property   $\Phi*\phi^{(*)r}=\phi^{(*)r}*\Phi$ for all $r\geq 0$.
	
Next, if 	 $\tilde{M}^{(1)}$ is finite on an interval $I$ of $\R_+$, then it is differentiable with derivative 
	$$ \tilde{m}^{(1)}(t)= \sum_{r=1}^\infty \phi^{(*)r}  (t).$$
	
and, due to Equation \eqref{implicit-mean-0},   if  $\tilde{M}^{0,(1)}$  is finite, then it is differentiable with derivative:	
$$ \tilde{m}^{0,(1)}(t)= \lambda^0(t) + \tilde{m}^{(1)}*\lambda^0 (t)= \left(\sum_{r=0}^\infty \phi^{(*)r} \right) *\lambda^0 (t).$$	
	
We now prove that the given conditions  are sufficient to ensure finite values of the means and their derivatives. For that purpose, let us recall the Young's inequalities for convolution in Lebesgue  spaces $L^p$: For any  $f \in L^p, \ g\in L^q$ and 
$1/p +1/q=1+1/r$, we have $f*g \in L^r $ and
 $ \|f*g\|_{L^r} \leq  \|f\|_{L^p}  \|f\|_{L^q} $.
 
 Thus, in the particular case where $r=p=q=1$ 
 and  $\|\phi^j_i\|_{L^1} \leq \alpha  , 1\leq i,j \leq d$, we get $\|\sum_{l=1}^d \phi^l_i * \phi^j_l \|_{L^1} \leq d \alpha^2 = (d \alpha)^2/d$.
 
 Iterating the procedure,  we get $\|\phi^{(*)r}\|_{L^1} \leq (d \alpha)^r/d $ for $r\geq 1 $.  For $d\alpha <1$ , we  finally get  
 $\| \sum_{r=1}^\infty \phi^{(*)r}\|_{L^1} \leq \sum_{r=1}^\infty (d \alpha )^r/d = \alpha /(1-d\alpha ) <\infty
$. 
 
Consequently the series $\sum_{r=1}^\infty \phi^{(*)r}$ converges in $L^1$ to  $\tilde{m}^{(1)}$ and, hence, all its components are almost everywhere finite (and integrable).

Clearly, $\Phi$ the primitive of $\phi$ is therefore bounded  and differentiable on any interval, the mean $\tilde{M}^{(1)}$  is also differentiable and   finite for all $t\geq 0$. 
Similar results hold true for  $M^{0,(1)}$   if $\lambda^0$ is in $L^1$.
\end{proof}


\subsubsection{Examples and Remarks}\label{example1}

\begin{enumerate}

\item We start with the classical example of constant $\phi$, that is $\phi(t)= W \chi(t)$ where $W=(w^j_i)$ is a constant matrix and   $\chi(t)={\mathbf 1}_{[0,\infty[}(t)$. Hence $\phi \in L^\infty$ and condition of Proposition \ref{mean} holds true. Then, we observe that $\chi^{(*)r} (t)=  t^{r-1}/(r-1)!$ and, therefore, we get 
$\ \tilde{m}^{(1)}(t)= \sum_{r=1}^\infty \phi^{(*)r)} (t)= W ( \sum_{k=0}^\infty W^k t^k/k!)= We^{tW}$. 

Consequently, since $\Phi(t)=t W $, we also have $ \tilde{M}^{(1)}(t)= \Phi* \left(\sum_{r=0}^\infty \phi^{(*)r} \right) (t)=  tW + W^2 \left(\sum_{r=0}^\infty W^r t^{r+2}/(r+2)!\right)= e^{tW} - I_d$.

It is worth noticing that using the Cauchy formula related to $r-$repeated integrals namely $I^{(r)}[g](t)= \int_0^t \int_0^{u_{r}}.... \int_0^{u_1} g(u_1) du_1...du_{r}= \int_0^t g(u) (t-u)^{(r-1)}/(r-1)! du$, we get for an initial igniting intensity $\lambda^{0}$, the formulas  $\tilde{m}^{0,(1)}(t)=\sum_{r=0}^\infty W^r I^{(r)}[\lambda^0] (t)$ and 
 $\tilde{M}^{0,(1)}(t)=\sum_{r=0}^\infty W^r I^{(r+1)}[\lambda^0] (t)$. 

 The latter expression can be useful in the case of the existence of recursive formulas for the calculus of repeated integrals of function such as  exponential, sine, etc.

\item The condition of Proposition \ref{mean} holds for $\phi^j_i(t)= \alpha^j_i e^{-\beta^j_i t}$, if $ d \alpha^j_i  <\beta^j_i   , \ \ 1\leq i,j \leq d$  and 
$\lambda^0(t)$ is a constant vector.

\item One can also consider the interesting class of $\Gamma$ intensities admitting many different shapes. For example if 
$\phi^j_i(t)= w^j_i f(t; \kappa,\theta)= w^j_i \frac {t^{\kappa-1} e^{-{t/\theta}}}{\Gamma (\kappa)\theta^{\kappa}}$, then using the stability of the $\Gamma$ probability distribution family for convolution, we get for $W=\left(  w^j_i\right)_{i,j}$ the  expression
$$\phi^{(*)r)} (t)= W^r f(t; r\kappa,\theta).$$ 
Therefore,  we obtain    $\ \tilde{m}^{(1)}(t)= \sum_{r=1}^\infty \phi^{(*)r)} (t)= \sum_{r=1}^\infty \frac{W^r}{ \theta \Gamma (r \kappa ) } \left(\frac{t}{\theta}\right)^{r\kappa -1} e^{-{t/\theta }} $
and 

$ \tilde{M}^{(1)}(t)= 
\sum_{r=1}^\infty \frac{W^r}{ \Gamma (r\kappa ) } \gamma(  r\kappa, t/\theta)$,  
with  $\gamma$ being the incomplete gamma function.
 
 \begin{itemize}	
\item	In particular for $\kappa=1$ we have 

 $\tilde{m}^{(1)}(t)= 
	\frac{e^{-t/\theta }}{t}  \left( e^{ W  \frac{t}{\theta} }  - I_d\right)  $
	and 	
$	\tilde{M}^{(1)}(t)=   
	\sum_{r=1}^\infty \frac{W^r}{ r! } \gamma( r,  t/\theta)$
	
\item If $\kappa=2$ and $W$ has a square root say $W^{1/2}$ (e.g if  $W$ is positive definite), then 
  $\sum_{r=1}^\infty \phi^{(*)r)} (t)=
\frac{e^{-t/\theta }}{t}  \left( \cosh( W^{1/2}  \frac{t}{\theta})  - I_d \right)$.
 \end{itemize}
 
 In any case, one has to use the  Jordan form  of $ W $ and $ W^{1/2}$ to make easier the computation (or approximation)  of the series. 
 
 \item One can arguably consider that $\sum_{r=0}^\infty \phi^{(*)r)} (t)$ is simply the inverse (in the both senses of matrix product and convolution product) of the matrix valued generalized function $I_d\delta_{0} (\mathrm ds) -\phi(s)\mathrm ds$.

 \end{enumerate}


\subsection{Second Order Moments} 
 
Taking $k=2 , a=(a^1_\bu, a^2_\bu)$ and $t=(t_1,t_2)$ in Equation \eqref{laplacek} yields for $j'=0:d$
\begin{eqnarray}\label{laplace2}
\tilde{L}^{j',(2)} \left(a,  t\right) & =& \tilde{p}^{j'}_0(t_2)
	\exp\left( \sum_{j=1}^d
	\int_{0}^{t_1} 
	e^{- (a^1_j+a^2_j)} \tilde{L}^{j,(2)}(a,  (t_1-u,t_2-u)) 
	\ \  \phi^{j'}_{j}(u)\mathrm du \right)\\
	& \times & \exp\left( \sum_{j=1}^d
	\int_{t_1}^{t_2} 
	e^{- a^2_j} \tilde{L}^{j,(1)}(a^2_\bu,  (t_2-u)) 
	\ \  \phi^{j'}_{j}(u)\mathrm du
	\right).
\end{eqnarray}

\noindent Thus, according to moment formula \eqref{moments} with $Q=( (1,m_1,k), (1,m_2,l))$ and for $j'=0:d$, we obtain:

$$    \tilde{M}^{j',(2)}_{k,l}(t_{m_1}, t_{m_2})= \E_{(\phi^j_\bu,\phi)} \left(\tilde{N}^{j'}_k (t_{m_1}) \tilde{N}^{j'}_l (t_{m_2}) \right)= \left. \partial^2_{ a^{m_1}_{k} a^{m_2}_{l}} \tilde{L}^{j',(2)} (a,  t)   \right|_{a=0}.$$

We can also write expressions of the covariance function satisfying:
$$\tilde{C}^{j'}_{k,l}(t_{m_1},t_{m_2})=  \tilde{M}^{j',(2)}_{k,l}(t_{m_1}, t_{m_2})
- \tilde{M}^{j',(1)}_{k}(t_{m_1}) \tilde{M}^{j',(1)}_{l}(t_{m_2}).$$
Below, we consider two cases: the case corresponding to a single time where we are interested in the covariance function of the 
$d$-vector $\tilde{N}^{j'} (t_1)$, and the case of distinct times $t_1 < t_2$ where we consider the covariance function of the $2d$-vector
$\tr (\tr \tilde{N}^{j'} (t_1),\tr\tilde{N}^{j'} (t_2)  )$   with covariance matrix  partitioned as follows
\begin{equation*}
	\tilde{C}^{j'}(t_1,t_2)=  \begin{pmatrix}
		\tilde{C}^{j'}(t_1,t_1) &  \tilde{C}^{j'}(t_1,t_2)\\
		\tilde{C}^{j'}(t_2,t_1) & \tilde{C}^{j'}(t_2,t_2)
	\end{pmatrix}.	
\end{equation*}
In this partitioned matrix, the diagonal blocks correspond to single times $t_1$ and $t_2$. 

\paragraph{Remarks} 
\begin{itemize}
\item The case  $m_1=m_2$  means that we must only deal with a single time $t_1$ (resp. $t_2$) and hence to consider   $a^1_\bu$ (resp. $a^1_\bu +a^2_\bu)$ instead of $a=(a^1_\bu,a^2_\bu)$, and then we must simply deal with $\tilde{L}^{j',(1)} $ instead of $\tilde{L}^{j',(2)}$. This finally leads  
to twice differentiate $\tilde{L}^{j',(1)}(a, s)$ with $a=a^1_\bu$ and $s=t_1$ (resp.  $a=a^1_\bu+a^2_bu$ and $s=t_2$).

\item The case   $m_1\neq m_2$,  say $m_1=1, m_2=2$,  implies that $t_1\ne t_2$ and in this case  
$  \partial^2_{ a^{1}_{j_1} a^{2}_{j_2}  } \tilde{L}^{j',(2)}  (a, t )  $ simplifies a lot since the third term in Equation \eqref{laplace2} only depends on $a^2_\bu$.	 

\item Finally, note that 		$\tilde{C}^{j'}(t_2,t_1)= \tr \tilde{C}^{j'}(t_1,t_2)$.
\end{itemize}

 
\subsubsection{Case of a Single Time}
\begin{proposition} For a single time $t\geq 0$ and  $j'=0:d$, we have 
\begin{equation}
	\tilde{C}^{j'}(t)=\sum_{j=1}^d \left[   \left(e_j +
	\tilde{M}^{j,(1)}\right) \tr\left( e_j +\tilde{M}^{j,(1)} \right) + \tilde{C}^{j}  \right]*\phi^{j'}_j (t).  
\end{equation}
Moreover for each $j',k,l \in 1:d$,  the vector  of distinct covariance components 
 $ \tilde{C}_{k,l}^{\bu}(t)= \tr (\tilde{C}^{1}_{k,l},\ldots, \tilde{C}^{d}_{k,l})(t)$	satisfies the linear integral equation
 \begin{equation} 
 	\tilde{C}_{k,l}^{\bu}(t)= R_{k,l}^{\bu}(t)	 +  \tr\phi*\tilde{C}_{k,l}^{\bu} (t)
 \end{equation}
 whose solution is 
\begin{equation}\label{cov1-2}
	\tilde{C}_{k,l}^{\bu}(t)=\tr \left( \sum_{r=1} \phi^{(*)r}\right) *\tilde{R}_{k,l}^{\bu}   (t)
\end{equation}   
with components $\tilde{R}^{j}_{k,l}(u)=\tr e_k \left(e_j +
\tilde{M}^{j,(1)}(u)\right) \tr\left( e_j +\tilde{M}^{j,(1)}(u) \right)e_l$ and  $R_{k,l}^{\bu}=   \tr\phi*\tilde{R}^{\bu}_{k,l}$. 
\end{proposition}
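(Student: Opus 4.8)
The plan is to exploit the fact that for a single time the covariance is exactly the Hessian at $a=0$ of the log-Laplace transform, i.e. the second cumulant. Writing $\theta^{j'}(a,t)=\log \tilde{L}^{j',(1)}(a,t)$ so that $\tilde{L}^{j',(1)}=e^{\theta^{j'}}$ with $\theta^{j'}(0_{[d]},t)=0$, the product rule gives $\partial^2_{a_k a_l}e^{\theta^{j'}}=e^{\theta^{j'}}\big(\partial^2_{a_k a_l}\theta^{j'}+(\partial_{a_k}\theta^{j'})(\partial_{a_l}\theta^{j'})\big)$. Evaluating at $a=0$ and recalling from the first-moment proposition that $\partial_{a_k}\theta^{j'}(0_{[d]},t)=-\tilde{M}^{j',(1)}_k(t)$, I obtain $\tilde{M}^{j',(2)}_{k,l}=\partial^2_{a_k a_l}\theta^{j'}|_{0}+\tilde{M}^{j',(1)}_k\tilde{M}^{j',(1)}_l$, and hence $\tilde{C}^{j'}_{k,l}(t)=\partial^2_{a_k a_l}\theta^{j'}(0_{[d]},t)$. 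So the whole problem reduces to differentiating the integral equation for $\theta^{j'}$ twice.

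Next I would take the single-time form of \eqref{laplacek}, namely $\theta^{j'}(a,t)=\log\tilde p^{j'}_0(t)+\sum_{j=1}^d\int_0^t e^{-a_j}e^{\theta^j(a,t-s)}\phi^{j'}_j(s)\,\mathrm ds$, and differentiate under the integral. Setting $g^j(a,u)=e^{-a_j}e^{\theta^j(a,u)}$, a direct computation gives $\partial_{a_k}g^j=g^j(-\mathbf 1_{k=j}+\partial_{a_k}\theta^j)$ and then $\partial^2_{a_k a_l}g^j=g^j\big[(-\mathbf 1_{k=j}+\partial_{a_k}\theta^j)(-\mathbf 1_{l=j}+\partial_{a_l}\theta^j)+\partial^2_{a_k a_l}\theta^j\big]$. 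Evaluating at $a=0$, where $g^j=1$, $\partial_{a_k}\theta^j|_0=-\tilde M^{j,(1)}_k$ and $\partial^2_{a_k a_l}\theta^j|_0=\tilde C^j_{k,l}$, and using $\mathbf 1_{k=j}=(e_j)_k$ so that $\mathbf 1_{k=j}+\tilde M^{j,(1)}_k=(e_j+\tilde M^{j,(1)})_k$, yields $\partial^2_{a_k a_l}g^j|_0(u)=(e_j+\tilde M^{j,(1)}(u))_k(e_j+\tilde M^{j,(1)}(u))_l+\tilde C^j_{k,l}(u)$. Plugging this back reproduces componentwise exactly the claimed identity, which in matrix form reads $\tilde C^{j'}(t)=\sum_{j=1}^d\big[(e_j+\tilde M^{j,(1)})\tr(e_j+\tilde M^{j,(1)})+\tilde C^j\big]*\phi^{j'}_j(t)$. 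The main care needed here is the bookkeeping in the double differentiation and the evaluation at zero; everything else is routine.

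For the second part I would fix $k,l$ and read the above as a closed linear system in the unknown vector $\tilde C^\bu_{k,l}=\tr(\tilde C^1_{k,l},\dots,\tilde C^d_{k,l})$ over $j'=1:d$. Writing the inhomogeneous ingredient as $\tilde R^j_{k,l}(u)=\tr e_k(e_j+\tilde M^{j,(1)}(u))\tr(e_j+\tilde M^{j,(1)}(u))e_l=(e_j+\tilde M^{j,(1)}(u))_k(e_j+\tilde M^{j,(1)}(u))_l$, and noting that $\sum_{j=1}^d\phi^{j'}_j*v^j=(\tr\phi*v)^{j'}$ because $\phi^{j'}_j$ sits in row $j$, column $j'$ of $\phi$, the system becomes $\tilde C^\bu_{k,l}=R^\bu_{k,l}+\tr\phi*\tilde C^\bu_{k,l}$ with $R^\bu_{k,l}=\tr\phi*\tilde R^\bu_{k,l}$. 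Iterating this Volterra equation (a Neumann series whose terms are all nonnegative, so that the interchange of summation and convolution is justified by monotone convergence, exactly as in the proof of Proposition \ref{mean}, with finiteness guaranteed under its $L^1$ condition) gives $\tilde C^\bu_{k,l}=\sum_{r=0}^\infty(\tr\phi)^{(*)r}*R^\bu_{k,l}=\sum_{r=1}^\infty(\tr\phi)^{(*)r}*\tilde R^\bu_{k,l}$; finally, since scalar convolution commutes one has $(\tr\phi)^{(*)r}=\tr(\phi^{(*)r})$, which delivers the announced closed form \eqref{cov1-2}. I expect the transpose/index bookkeeping in passing from the componentwise identity to the matrix Volterra equation to be the most error-prone step.
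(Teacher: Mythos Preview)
Your proof is correct and follows essentially the same route as the paper: both differentiate the single-time Laplace equation \eqref{laplacek} twice, evaluate at $a=0$, and then iterate the resulting linear Volterra system via the Neumann series $\sum_{r\ge 0}(\tr\phi)^{(*)r}$. Your only (welcome) streamlining is to identify the covariance directly as the second derivative of $\theta^{j'}=\log\tilde L^{j',(1)}$ at $0$, so you land on $\tilde C^{j'}_{k,l}$ without first computing $\tilde M^{j',(2)}_{k,l}$ and subtracting $\tilde M^{j',(1)}_k\tilde M^{j',(1)}_l$ as the paper does; the underlying calculus (the paper's $U^{j'}_l$ is exactly your $\partial_{a_l}\theta^{j'}$) and the final vector equation are identical.
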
	

\begin{proof}
For  a single time  $t$ and a  vector $a$, we have for $j'=0:d$
$$\E_{(\phi^{j'},\phi)} \left( \tilde{N}^{j'}_k (t)  \tilde{N}^{j'}_l (t)\right)=  \tilde{M}^{j',(2)}_{k,l}(t)=  \left. \partial^2_{a_k,a_l}\tilde{L}^{j',(1)} (a,  t)  \right|_{a=0}.$$

Using  Formula \eqref{mean}, 
\begin{align*}
 \partial^1_{a_l}\tilde{L}^{j',(1)} \left(a,  t\right) &=	    \tilde{L}^{j',(1)} (a,t)\times  U_l^{j'}(a,t) \\
\partial^2_{a_k,a_l} \tilde{L}^{j',(1)} \left(a,  t\right) &= 	 \tilde{L}^{j',(1)} \left(a,  t\right) \left[ U_k^{j'}(a,t) U_l^{j'}(a,t) + \partial^1_{a_k} U_l^{j'}(a,t)\right],
\end{align*}
with
$ U^{j'}_l(a,t)=  \sum_{j=1}^d \left( (-1_{l=j}) e^{- a_j} \tilde{L}^{j,(1)}(a, .) +
e^{- a_j} \partial_{a_l}^1\tilde{L}^{j,(1)}(a, .)\right) * \phi^{j'}_{j}  (t)$.
So, we have 
\begin{eqnarray*}
\partial^1_{a_k} U_l^{j'}(a,t)&=&  \sum_{j=1}^d  \left[\partial^1_{a_k}\left( (-1_{l=j}) e^{- a_j} \tilde{L}^{j,(1)}(a, .) +
	e^{- a_j} \partial_{a_l}^1\tilde{L}^{j,(1)}(a, .)\right)\right] * \phi^{j'}_{j}   (t) \\
	&=&    \sum_{j=1}^d  -1_{l=j} \left( -(1_{k=j}) e^{- a_j} 
 \tilde{L}^{j,(1)}(a, .)  +
	             e^{- a_j} \partial^1_{a_k}\tilde{L}^{j,(1)}(a, .) \right)* \phi^{j'}_{j} (t)\\
	&+&    \sum_{j=1}^d \left( (-1_{k=j})	e^{- a_j} \partial_{a_l}^1\tilde{L}^{j,(1)}(a, .)
	+ 	e^{- a_j} \partial_{a_ka_l}^{2}\tilde{L}^{j,(1)}(a, .)\right) * \phi^{j'}_{j} (t). 	
\end{eqnarray*}
Setting $a=0$ and recalling that for  all $s\geq 0$ and $j'=0:d$, we have  
$$\tilde{L} ^{j',(1)}(0,s)\equiv 1, \  \partial^1_{a_k}\tilde{L}^{j',(1)}(0, s)=U^{j'}_k(0,s)= - \tilde{M}^{j',(1)}_k (s), \text{and}  \ \   \partial^2_{a_k,a_l}\tilde{L}^{j',(1)}(0, s)= \tilde{M}^{j',(2)}_{k,l}(s),$$  
we get
$$\partial^1_{a_k} U_l^{j'}(0,t)= \sum_{j=1}^d \left( 1_{l=j} 1_{k=j} +
1_{l=j}\tilde{M}^{j,(1)}_k (.)  + 1_{k=j}\tilde{M}^{j,(1)}_l (.) 
+ 	 \tilde{M}^{j,(2)}_{kl} (.) \right) * \phi^{j'}_{j} (t)$$
and we finally obtain  for $1\leq k,l \leq d$
$$\tilde{M}^{j',(2)}_{k,l}(t)= \left(\tilde{M}^{j',(1)}_k  \tilde{M}^{j',(1)}_l  + 1_{k=l}\Phi^{j'}_k + 
 \tilde{M}^{l,(1)}_k*\phi^{j'}_l  +\tilde{M}^{k,(1)}_l*\phi^{j'}_k + \sum_{j=1}^d\tilde{M}^{j,(2)}_{k,l}*\phi^{j'}_j \right)(t). $$

Next, let us consider the  $d\times d$ components of the matrix covariance function at a single time $t$ 
$$\tilde{C}_{k,l}^{j'}(t)= \tilde{M}^{j',(2)}_{k,l}(t)-\tilde{M}^{j',(1)}_k (t) \tilde{M}^{j',(1)}_l (t).$$

\noindent For $j'=0:d$, the latter expression can thus be written 
\begin{equation}\label{cov1-1}
 \tilde{C}^{j'}_{k,l}(t)= \sum_{j=1}^d \left[  \left(1_{j=k}+
 \tilde{M}^{j,(1)}_k\right) \left( 1_{j=l}  +\tilde{M}^{j,(1)}_l \right) + \tilde{C}^{j}_{k,l} \right]*\phi^{j'}_j (t) .  
\end{equation}
Next, if we restrain index $j'$ to $\{1,...,d\}$, we also  observe vector integral equations involving $d$ functions, that is for each pair $(k,l)$, the component $ \tilde{C}_{k,l}^{\bu}(t)= \tr (\tilde{C}^{1}_{k,l},\ldots, \tilde{C}^{d}_{k,l})(t)$,  satisfies an integral equation similar to that of the vector of means, that is 
\begin{equation} 
	\tilde{C}_{k,l}^{\bu}(t)= R_{k,l}^{\bu}(t)	 +  \tr\phi*\tilde{C}_{k,l}^{\bu} (t).
\end{equation}
Since $(\tr\phi)*(\tr\phi)=\tr (\phi*\phi)$ and all functions are non negative, the solution, finite or infinite, corresponds to the series:
\begin{equation}
	\tilde{C}_{k,l}^{\bu}(t)=\tr \left( \sum_{r=0} \phi^{(*)r}\right) *R_{k,l}^{\bu}  (t). 
\end{equation}  

Furthermore, we notice that Equation \eqref{cov1-1} can also be written 
\begin{equation}
\tilde{C}_{k,l}^{j'}(t) =\sum_{j=1}^d \left[  \tr e_k  \left(e_j +
 \tilde{M}^{j,(1)}\right) \tr\left( e_j +\tilde{M}^{j,(1)} \right)e_l +\tr e_k\tilde{C}^{j} e_l  \right]*\phi^{j'}_j (t).,   
\end{equation}
which takes the matrix  form,  
\begin{equation}
	\tilde{C}^{j'}(t)=\sum_{j=1}^d \left[   \left(e_j +
	\tilde{M}^{j,(1)}\right) \tr\left( e_j +\tilde{M}^{j,(1)} \right) + \tilde{C}^{j}  \right]*\phi^{j'}_j (t).  
\end{equation}
To end the proof, let us observe that  $R_{k,l}^{\bu}=   \tr\phi*\tilde{R}^{\bu}_{k,l}$ with 
 $\tilde{R}^{\bu}_{k,l} $ having components $\tilde{R}^{j}_{k,l}(u)=\tr e_k \left(e_j +
\tilde{M}^{j,(1)}(s)\right) \tr\left( e_j +\tilde{M}^{j,(1)}(s) \right)e_l$.

\end{proof}


\subsubsection{Covariance for  Distinct Times}

We now deal with  the off diagonal covariance structures related to distinct times for indices $j'=0:d$
$$\tilde{C}^{j'}(t_1,t_2)= \E_{(\phi^{j'}_\bu,\phi)}\left(\tilde{N}^{j'} (t_1) \tr\tilde{N}^{j'}(t_2)\right)
- \tilde{M}^{j',(1))}(t_1) \tr \tilde{M}^{j',(1))}(t_2).$$

\begin{proposition} For  $t=(t_1,t_2) $ with $t_1< t_2$ and convening again that $u_{[2]}=(u,u)$ if $u\in \R_+$, then for $j'=0:d$,   the sub-covariance structure  $\tilde{C}^{j'}(t)$
satisfies the matrix integral equation: 
\begin{equation}\label{cov2-1}
	\tilde{C}^{j'}(t) = \sum_{j=1}^d \int_0^{t_1}  \left[ \left(e_j +\tilde{M}^{j,(1)}(t_1-u)\right)\tr
	\left( e_j+\tilde{M}^{j,(1)}(t_2-u)\right)   +   \tilde{C}^{j}(t-u_{[2]}) \right]\ \phi^{j'}_j (u) \mathrm du .   
\end{equation}
Moreover, if $\tilde{C}^{\bu}_{k,l}=\tr (\tilde{C}^{1}_{k,l},\ldots,\tilde{C}^{d}_{k,l})$ and 
$R^{\bu}_{k,l} =\tr (R^{1}_{k,l}, \ldots,R^{d}_{k,l})$ 
with 
$$ R^{j'}_{k,l} (t) = \sum_{j=1}^d \int_0^{t_1}  \tr e_k \left(e_j +\tilde{M}^{j,(1)}(t_1-u)\right)\tr
\left( e_j+\tilde{M}^{j,(1)}(t_2-u)\right) e_l \phi^{j'}_j (u) \mathrm du, $$ then each index pair $(k,l)$ gives rise to  independent vector integral equations  
\begin{equation}\label{eq-cov-2}
	\tilde{C}^{\bu}_{k,l}(t) = R^{\bu}_{k,l} (t) + \int_0^{t_1}  \tr \phi(u)   \tilde{C}^{\bu}_{k,l}(t-u_{[2]}) \mathrm du.    
\end{equation}

\end{proposition}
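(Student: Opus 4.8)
The plan is to mirror the single-time computation, now differentiating the two-time Laplace transform \eqref{laplace2} once in each time block. Write $\tilde{L}^{j',(2)}(a,t)=\tilde{p}^{j'}_0(t_2)\,e^{A(a,t)+B(a,t)}$, where $A$ collects the integral over $[0,t_1]$ (depending on both $a^1_\bu$ and $a^2_\bu$) and $B$ collects the integral over $[t_1,t_2]$. The decisive structural fact, already flagged in the Remarks, is that $B$ depends only on $a^2_\bu$, so $\partial_{a^1_k}B\equiv 0$. By the moment formula \eqref{moments} with $Q=\{(1,1,k),(1,2,l)\}$, I need $\tilde{M}^{j',(2)}_{k,l}(t_1,t_2)=\left.\partial^2_{a^1_k\,a^2_l}\tilde{L}^{j',(2)}(a,t)\right|_{a=0}$.

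First I would differentiate in $a^1_k$, using $\partial_{a^1_k}B=0$ to get $\partial_{a^1_k}\tilde{L}^{j',(2)}=\tilde{L}^{j',(2)}\,\partial_{a^1_k}A$; then differentiate in $a^2_l$ and evaluate at $a=0$, recalling that $\tilde{L}^{j,(2)}(0,\cdot)\equiv 1$, that a single first derivative of a Laplace transform returns minus the corresponding mean, and that the mixed second derivative of $\tilde{L}^{j,(2)}$ returns the two-time second moment $\tilde{M}^{j,(2)}_{k,l}$. Here the time arguments must be tracked carefully: in $\tilde{L}^{j,(2)}(a,(t_1-u,t_2-u))$ the variable $a^1_k$ pairs with the first component $t_1-u$ and $a^2_l$ with the second component $t_2-u$. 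The outer differentiation of the prefactor $\tilde{L}^{j',(2)}$ produces the term $\partial_{a^2_l}(A+B)\big|_0\cdot\partial_{a^1_k}A\big|_0=\tilde{M}^{j',(1)}_k(t_1)\,\tilde{M}^{j',(1)}_l(t_2)$, while the remaining contribution is $\partial^2_{a^1_k a^2_l}A\big|_0$, in which the factors $e^{-(a^1_j+a^2_j)}$ contribute the indicator terms $\mathbf 1_{j=k}$ and $\mathbf 1_{j=l}$ upon differentiation.

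Forming the covariance then cancels the product $\tilde{M}^{j',(1)}_k(t_1)\tilde{M}^{j',(1)}_l(t_2)$, leaving $\tilde{C}^{j'}_{k,l}(t_1,t_2)=\partial^2_{a^1_k a^2_l}A\big|_0$ as a sum over $j$ of integrals against $\phi^{j'}_j(u)$ of the four terms $\mathbf 1_{j=k}\mathbf 1_{j=l}$, $\mathbf 1_{j=k}\tilde{M}^{j,(1)}_l(t_2-u)$, $\mathbf 1_{j=l}\tilde{M}^{j,(1)}_k(t_1-u)$, and $\tilde{M}^{j,(2)}_{k,l}(t_1-u,t_2-u)$. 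Substituting $\tilde{M}^{j,(2)}_{k,l}=\tilde{C}^{j}_{k,l}+\tilde{M}^{j,(1)}_k\tilde{M}^{j,(1)}_l$ converts the last term and lets the first four combine into the perfect product $\bigl(\mathbf 1_{j=k}+\tilde{M}^{j,(1)}_k(t_1-u)\bigr)\bigl(\mathbf 1_{j=l}+\tilde{M}^{j,(1)}_l(t_2-u)\bigr)$, which I recognize as the $(k,l)$ entry of the outer product $(e_j+\tilde{M}^{j,(1)}(t_1-u))\tr(e_j+\tilde{M}^{j,(1)}(t_2-u))$, since $e_j$ has $k$-th component $\mathbf 1_{j=k}$. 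This yields the matrix equation \eqref{cov2-1} with $t-u_{[2]}=(t_1-u,t_2-u)$. Finally, restricting $j'$ to $1:d$ and stacking the scalar equations for a fixed pair $(k,l)$ into the column $\tilde{C}^{\bu}_{k,l}$ produces the vector integral equation \eqref{eq-cov-2}, exactly as in the single-time case.

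The main obstacle will be the bookkeeping in the mixed second derivative: correctly pairing each differentiation variable with its time argument and tracking the signs and indicator contributions from the exponential prefactors, so that the four leftover terms assemble cleanly into the announced outer-product form. Everything else is a direct transcription of the single-time argument, now exploiting the $a^1$-independence of the block $B$.
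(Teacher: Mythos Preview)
Your proposal is correct and follows essentially the same approach as the paper: the paper likewise differentiates $\tilde{L}^{j',(2)}=\exp(\cdot)$ first in $a^1_k$ (introducing $U^{j'}_k=\partial_{a^1_k}A$, using that the second block does not depend on $a^1_\bu$) and then in $a^2_l$ (introducing $V^{j'}_l=\partial_{a^2_l}(A+B)$), evaluates at $a=0$ to obtain $\tilde{M}^{j',(2)}_{k,l}(t)=\tilde{M}^{j',(1)}_k(t_1)\tilde{M}^{j',(1)}_l(t_2)+\sum_j\int_0^{t_1}(\ldots)\phi^{j'}_j(u)\,\mathrm du$ with exactly your four integrand terms, and then subtracts the product of means and regroups into the outer-product form to get \eqref{cov2-1} and \eqref{eq-cov-2}. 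The only cosmetic difference is notation ($A,B$ for you versus the explicit auxiliary functions $U^{j'}_k,V^{j'}_l$ in the paper).
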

\begin{proof}

We now consider the case  $Q=\{ (1,1,k), (1,2,l) \}$, that is 
$$ \tilde{M}^{j',(2)}_{k,l}(t)= \left. \partial^2_{a^1_k a^2_l} \tilde{L}^{j',(2)} \left(a,  t\right)\right|_{a=0} $$  
  
\noindent with the Laplace transform $\tilde{L}^{j',(2)}\left(a,  t\right)$ given by Equation \eqref{laplace2}. 
 We have\!: 
 $$ \partial^1_{a^2_l}\partial^1_{a^1_k}\tilde{L}^{j',(2)}\left(a,  t\right) =	\tilde{L}^{j',(2)} (a,  t)   \left( V^{j'}_l(a,t)U^{j'}_k(a,t) + \partial^1_{a^2_l} U^{j'}_k(a,t)\right)$$
 where 
 $$
U^{j'}_k(a,t) = \sum_{j=1}^d  e^{- (a^1_j+a^2_j)}
\int_{0}^{t_1}  \left( -1_{k=j} +
\partial^1_{a^1_k} \right)\tilde{L}^{j,(2)}(a,  (t-u_{[2]}) \phi^{j'}_{j}(u)\mathrm du$$
satisfies when $a=0$
$$U^{j'}_k(0,t)= \sum_{j=1}^d \int_{0}^{t_1}  \left( -1_{k=j}  -\tilde{M}^{j,(1)}_k(t_1 -u) \right) \phi^{j'}_{j}(u) \mathrm du              
=   - \tilde{M}^{j',(1)}(t_1).$$
Moreover,
\begin{align*}
\partial^1_{a^2_l} U^{j'}_k(a,t)  = \sum_{j=1}^d e^{- (a^1_j+a^2_j)}
 \int_{0}^{t_1} & \left( 1_{l=j} 1_{k=j} -1_{k=j} \partial^1_{a^2_l}     -1_{l=j} \partial^1_{a^1_k} +    \partial^2_{a^2_l a^1_k} \right)\tilde{L}^{j,(2)} (a,  t-u_{[2]} ) \\[6pt]
 &\times \phi^{j'}_{j}(u)  \mathrm du
\end{align*}

 \noindent satisfies when $a=0$
\begin{align*}
\partial^1_{a^2_l} U^{j'}_k(0,t) = \,\sum_{j=1}^d 
 \int_{0}^{t_1} & \left( 1_{l=j} 1_{k=j} + 1_{k=j}\tilde{M}^{j,(1)}_l(t_2-u)  + 1_{l=j}\tilde{M}^{j,(1)}_k(t_1-u)     + \tilde{M}^{j,(2)}_{k,l} ( t-u_{[2]} )\, \right)\\&\times\phi^{j'}_{j}(u)  \mathrm du.  
\end{align*}

\noindent We also have  
\begin{eqnarray*}
V^{j'}_l(a,t) &=&\sum_{j=1}^d e^{- (a^1_j+a^2_j)}
	\int_{0}^{t_1} \left( -1_{l=j} +
	\partial^1_{a^2_l} \right)\tilde{L}^{j,(2)}(a,  (t - u_{[2]}) ) 
 \phi^{j'}_{j}(u)  \mathrm du  \\
    & & + \sum_{j=1}^d e^{- a^2_j}
 \int_{t_1}^{t_2} \left( -1_{l=j} + \partial^1_{a^2_l} \right)\tilde{L}^{j,(1)}(a^2_\bu,  t_2-u) 
 \phi^{j'}_{j}(u)  \mathrm du  
\end{eqnarray*}
 with
 \begin{eqnarray*}
V^{j'}_l(0,t)&=& \sum_{j=1}^d \left[ 
	\int_{0}^{t_1}   \left(-1_{l=j}  -\tilde{M}^{j,(1)}_l(t_2-u) \right)\phi^{j'}_{j}(u)  \mathrm du \right.\\
	& & + \left. \int_{t_1}^{t_2} \left( -1_{l=j} -\tilde{M}^{j,(1)}_l(t_2-u) \right)\phi^{j'}_{j}(u)  \mathrm du \right]\\
&=& -\tilde{M}^{j',(1)}_l(t_2). 
\end{eqnarray*}
All in all,  we finally get for $j'=0:d$
{\small
\begin{eqnarray*}
\tilde{M}^{j',(2)}_{k,l}(t)&= &\tilde{M}^{j',(1)}_{k}(t_1)\tilde{M}^{j',(1)}_{l}(t_2)
+\sum_{j=1}^d \int_0^{t_1}\Big( \mathbf 1_{l=j} \mathbf 1_{k=j} 
+ \mathbf 1_{k=j}\tilde{M}^{(1),j}_l(t_2\,-u)
\\ & & +
\mathbf 1_{l=j}\tilde{M}^{j,(1)}_k(t_1\,-u)     + \tilde{M}^{j,(2)}_{k,l} (t-u_{[2]})\, \Big)\phi^{j'}_{j}(u)  \,\mathrm du\\[7pt]
&= &\tilde{M}^{j',(1)}_{k}(t_1)\tilde{M}^{j',(1)}_{l}(t_2)
+
\sum_{j=1}^d \int_0^{t_1}\tr e_k\Big( e_j\tr e_j +e_j\tr\tilde{M}^{j,(1)}(t_2-u) +\tilde{M}^{j,(1)}(t_1-u)\tr e_j\\ 
&& + \tilde{M}^{j,(2)} (t-u_{[2]})\Big)e_l\ \phi^{j'}_{j}(u) \, \mathrm du\\
\end{eqnarray*}
}

\noindent which can be written under the matrix form
{\small
	\begin{eqnarray*}
\tilde{M}^{j',(2)}(t)&= &\tilde{M}^{j',(1)}(t_1)\tr\tilde{M}^{j',(1)}(t_2)\\
&&+
\sum_{j=1}^d \int_0^{t_1} \left( e_j\tr e_j +e_j\tr\tilde{M}^{j,(1),}(t_2-u) +\tilde{M}^{j,(1)}(t_1-u)\tr e_j +\tilde{M}^{j,(2)} (t-u_{[2]})\right) \phi^{j'}_{j}(u)  \mathrm du.
\end{eqnarray*}
}

\noindent Similarly, the covariance function can be written
\begin{equation}\label{cov2-1}
	\tilde{C}^{j'}(t) = \sum_{j=1}^d \int_0^{t_1}  \left[ \left(e_j +\tilde{M}^{j,(1)}(t_1-u)\right)\tr
	\left( e_j+\tilde{M}^{j,(1)}(t_2-u)\right)   +   \tilde{C}^{j}(t-u_{[2]}) \right] \phi^{j'}_j (u) \mathrm du    
\end{equation}
whose components satisfies for all $1\leq k,l\leq d$ and $j'=0:d$

\begin{equation}\label{eq-cov}
	\tilde{C}^{j'}_{k,l}(t) = R^{j'}_{k,l} (t) + \sum_{j=1}^d \int_0^{t_1}     \tilde{C}^{j}_{k,l}(t-u_{[2]})\ \phi^{j'}_j (u) \mathrm du    
\end{equation}
with 
\begin{equation}\label{R}
 R^{j'}_{k,l} (t) = \sum_{j=1}^d \int_0^{t_1}  \tr e_k \left(e_j +\tilde{M}^{j,(1)}(t_1-u)\right)\tr
\left( e_j+\tilde{M}^{j,(1)}(t_2-u)\right) e_l \phi^{j'}_j (u) \mathrm du.
\end{equation}

Once again, the restriction of index $j'$ to $1:d$, yields a set of  $d$ matrix integral equations \eqref{cov2-1} related to the set of $d$ base matrices.

For each pair of indices $k,l$, let
 $\tilde{C}^{\bu}_{k,l}=\tr (\tilde{C}^{1}_{k,l},\ldots,\tilde{C}^{d}_{k,l})$ and 
$R^{\bu}_{k,l} =\tr (R^{1}_{k,l}, \ldots,R^{d}_{k,l})$,
then Equation \eqref{eq-cov} can be summed up to the following vector integral equation
\begin{equation}\label{eq-cov-2}
	\tilde{C}^{\bu}_{k,l}(t) = R^{\bu}_{k,l} (t) + \int_0^{t_1}  \tr \phi(u)   \tilde{C}^{\bu}_{k,l}(t-u_{[2]}) \mathrm du.    
\end{equation}

\end{proof}

\subsubsection{Explicit Solution}

Here, we provide an explicit solution of Equations \eqref{eq-cov-2}. In this aim, we recourse again to generalized function calculus. Before formulating the proposition, we highlight that  a real valued locally integrable function $\psi$ considered as a measure $\psi(u)\mathrm du$ on $\R_+$, can be extended to a measure 
$\tilde{\psi}(\mathrm du,\mathrm dv)= \psi(u)\delta_u(\mathrm dv)\mathrm du$ supported by the diagonal $\Delta$ of  $\R_+\times \R_+$ and defined  for any measurable function $f$ on $\R_+^2$ as follows
$$\tilde{\psi} (f)= \int_0^\infty \int_0^\infty f(u,v)\psi(u)\delta_u(\mathrm dv)\mathrm du=\int_0^\infty f(u,u)
\psi(u)\mathrm du.$$
Moreover, by extension, we will consider the  $d\times d$ matrix of positive measures   $\tilde{\phi} (\mathrm du,\mathrm dv)=\left( \tilde{\phi}^{j'}_j (\mathrm du,\mathrm dv)\  \right)_{j',j \in 1:d}$
supported by the diagonal $\Delta$ of  $\R_+\times \R_+$.

\begin{proposition}\label{sol-cov-2}
For $t_1 <t_2$ and $k$ and $l$ in $1:d$, the vector  $\tilde{C}^{\bu}_{k,l}(t)=\tr(\tilde{C}^{1}_{k,l},\ldots,\tilde{C}^{d}_{k,l} )(t)$ satisfying Equation \eqref{eq-cov-2}
	has for solution  
\begin{equation}\label{cov-2-2}
\tilde{C}^{\bu}_{k,l}(t_1,t_2)= \tr \widetilde{\left(\sum_{r=1}^\infty \phi^{(*)r} \right)}* \tilde{R}^{\bu}_{k,l}
\ (t_1,t_2)
\end{equation}
where, for $1 \leq j,k,l\leq d$, the vector bivariate  function $\tilde{R}^{\bu}_{k,l}$ has components 
$$ \tilde{R}^{j}_{k,l}(u,v)=  \tr e_k \left(e_j +\tilde{M}^{j,(1)}(u)\right)\tr
\left( e_j+\tilde{M}^{j,(1)}(v)\right) e_l. $$	
Moreover,  for $j'=0$, that is to say $\phi^0=\lambda^0$, we have
\begin{equation}\label{cov-2-2-2}
\tilde{C}^{0}_{k,l}(t_1,t_2)= \tr \left( \widetilde{\left(\sum_{r=0}^\infty \phi^{(*)r} \right)}* \tilde{\lambda}^0 \right)* \tilde{R}^{\bu}_{k,l} (t_1,t_2).
\end{equation}
\end{proposition}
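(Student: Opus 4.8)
The plan is to read Equation \eqref{eq-cov-2} as a linear Volterra integral equation of the second kind whose kernel acts through the diagonal-supported convolution introduced just before the statement, and then to solve it by a Neumann (Picard) iteration, exactly mirroring the treatment of the first moments in Proposition \ref{mean}. Writing $K$ for the operator $v\mapsto \tr\widetilde{\phi}*v$, that is $(Kv)(t_1,t_2)=\int_0^{t_1}\tr\phi(u)\,v(t_1-u,t_2-u)\,\mathrm du$, Equation \eqref{eq-cov-2} reads $\tilde{C}^{\bu}_{k,l}=R^{\bu}_{k,l}+K\tilde{C}^{\bu}_{k,l}$. Since every entry of $\phi$, of $\tilde{M}^{j,(1)}$, and hence of $R^{\bu}_{k,l}$, is non-negative, the partial sums $\sum_{r=0}^{m}K^{r}R^{\bu}_{k,l}$ increase with $m$, so the Neumann series $\tilde{C}^{\bu}_{k,l}=\sum_{r=0}^{\infty}K^{r}R^{\bu}_{k,l}$ converges in $[0,+\infty]$ by monotone convergence and furnishes the (finite-or-infinite) solution; uniqueness among locally bounded solutions follows, as for any Volterra equation, by iterating $D=KD$ for a difference $D$ of two solutions and using the vanishing contribution of $\phi^{(*)r}$ on each bounded interval.

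The crux is to identify $K^{r}$. Here I would first prove the homomorphism property $\tilde{\psi}*\tilde{\chi}=\widetilde{\psi*\chi}$ for matrix functions $\psi,\chi$ on $\R_+$, where on the left $*$ is the ordinary two-dimensional convolution of diagonal-supported measures and on the right $\psi*\chi$ is the matrix convolution on $\R_+$. This is a one-line Fubini argument: convolving two measures carried by the diagonal and testing against a function amounts to the single change of variables $s=u+w$, which turns the double integral $\int_0^{t_1}\!\int_0^{t_1-u}\phi(u)\phi(w)(\cdots)\,\mathrm dw\,\mathrm du$ into $\int_0^{t_1}\phi^{(*)2}(s)(\cdots)\,\mathrm ds$; the only point to watch is the paper's convention $m^j_i=(m)_{ij}$, under which $\sum_j\phi^{j'}_j*\chi^{j}_m=(\chi*\phi)^{j'}_m$, so that the transpose is bookkept consistently and $K^{r}v=\tr\widetilde{\phi^{(*)r}}*v$. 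Next I would read off from \eqref{R} that $R^{\bu}_{k,l}=\tr\widetilde{\phi}*\tilde{R}^{\bu}_{k,l}=K\tilde{R}^{\bu}_{k,l}$, with $\tilde{R}^{j}_{k,l}(u,v)=\tr e_k\left(e_j+\tilde{M}^{j,(1)}(u)\right)\tr\left(e_j+\tilde{M}^{j,(1)}(v)\right)e_l$. Substituting into the Neumann series and reindexing then gives $\tilde{C}^{\bu}_{k,l}=\sum_{r=0}^{\infty}K^{r+1}\tilde{R}^{\bu}_{k,l}=\sum_{r=1}^{\infty}\tr\widetilde{\phi^{(*)r}}*\tilde{R}^{\bu}_{k,l}=\tr\widetilde{\left(\sum_{r\ge 1}\phi^{(*)r}\right)}*\tilde{R}^{\bu}_{k,l}$, which is \eqref{cov-2-2}.

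For the remaining index $j'=0$, which lies outside the closed vector system, $\tilde{N}^{0}$ is driven by $\lambda^0=\phi^0_\bu$, so Equation \eqref{eq-cov} specialises to $\tilde{C}^{0}_{k,l}=R^{0}_{k,l}+\tr\tilde\lambda^0*\tilde{C}^{\bu}_{k,l}$ with $R^{0}_{k,l}=\tr\tilde\lambda^0*\tilde{R}^{\bu}_{k,l}$, where $\tr\tilde\lambda^0*w$ denotes the scalar $\sum_{j}\int_0^{t_1}\lambda^0_j(u)\,w_j(t_1-u,t_2-u)\,\mathrm du$. Plugging in the solution \eqref{cov-2-2} and invoking the homomorphism once more in the form $\tr\tilde\lambda^0*(\tr\widetilde{\psi}*v)=\tr(\widetilde{\psi}*\tilde\lambda^0)*v$, the two contributions combine, using $\phi^{(*)0}*\lambda^0=\lambda^0$, into $\tr\bigl(\widetilde{\left(\sum_{r\ge 0}\phi^{(*)r}\right)}*\tilde\lambda^0\bigr)*\tilde{R}^{\bu}_{k,l}$, which is exactly \eqref{cov-2-2-2}. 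I expect the main obstacle to be notational rather than analytic: keeping the diagonal-supported measures, the matrix index convention $m^j_i=(m)_{ij}$, and the transposes consistent through the change-of-variables step that proves $\tilde{\psi}*\tilde{\chi}=\widetilde{\psi*\chi}$. The convergence and interchange issues are mild and are dispatched precisely as in Proposition \ref{mean} by the non-negativity of all the ingredients.
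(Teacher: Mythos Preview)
Your proposal is correct and follows essentially the same route as the paper: both arguments recast \eqref{eq-cov-2} as a diagonal-convolution equation $\tilde{C}^{\bu}_{k,l}=R^{\bu}_{k,l}+\tr\tilde{\phi}*\tilde{C}^{\bu}_{k,l}$, establish the homomorphism $\tilde{\psi}*\tilde{\chi}=\widetilde{\psi*\chi}$ (hence $(\tilde{\phi})^{(*)r}=\widetilde{\phi^{(*)r}}$) by the same change of variables, identify $R^{\bu}_{k,l}=\tr\tilde{\phi}*\tilde{R}^{\bu}_{k,l}$ from \eqref{R}, and iterate to obtain the Neumann series \eqref{cov-2-2}; the case $j'=0$ is handled identically by substitution into \eqref{eq-cov}/\eqref{cov2-1}. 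Your write-up is slightly more explicit about monotone convergence and uniqueness than the paper, but the substance is the same.
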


\begin{proof}

With the previous convention related to convolution, we set 
\begin{equation*}
   \tilde{\phi}^{(*)0}(\mathrm du,\mathrm dv)=\mathrm{diag}(\delta_{0_{[2]}},\ldots,\delta_{0_{[2]}} )=I_d\delta_{0_{[2]}}(\mathrm du, \mathrm dv), 
\end{equation*}

\noindent which is of dimension $d\times d$, and $\tilde{\phi}^{(*)1}=\tilde{\phi}$. For  higher orders, we obtain according to the definition of convolution of generalized functions:
	
	\begin{eqnarray*}
		\tilde{\phi^{j'}_j}* \tilde{\phi^{i'}_i}(f)&=&\int_{\R_+^2}	\int_{\R_+^2} f(u_1+u_2, v_1+v_2)\,\phi^{j'}_j(u_1)\,\delta_{u_1}(\mathrm dv_1)  \phi^{i'}_i(u_2)\,\delta_{u_2}(\mathrm dv_2) \,\mathrm du_1 \mathrm du_2\\
		&= &\int_{\R_+^2} f(u_1+u_2, u_1+u_2)\,\phi^{j'}_j(u_1)\,  \phi^{i'}_i(u_2)\, \mathrm du_1 \mathrm du_2\\
		&=& \int_0^\infty  f(w, w)\left(\int_0^\infty \phi^{j'}_j(u_1)\,  \phi^{i'}_i(w-u_1)\, \mathrm du_1\right) \mathrm dw \\
		&=& \int_0^\infty  f(w, w)\left(\int_0^w \phi^{j'}_j(u_1)\,  \phi^{i'}_i(w-u_1)\, \mathrm du_1\right) \mathrm dw \\
		&=&\widetilde{\phi^{j'}_j* \phi^{i'}_i}(f).
	\end{eqnarray*}
	Similarly, combining matrix product properties and iterating the latter result, we get      $(\tilde{\phi})^{(*)r}= \widetilde{\left(\phi^{(*)r}\right)}$  for all $r\geq 2.$

Then, let us observe that    

\begin{eqnarray*}
\int_0^{t_1}     \tilde{C}^{j}_{k,l}(t-u_{[2]})\ \phi^{j'}_j (u) \mathrm du&=& 
\int_0^{t_1} \int_0^{t_2}     \tilde{C}^{j}_{k,l}(t-(u,v)) \phi^{j'}_j (u)
\delta_{u}(\mathrm dv) \mathrm du\\
&=& \tilde{C}^{j}_{k,l} *\tilde{\phi}^{j'}_j (t_1,t_2).
\end{eqnarray*}
 Similarly, using tensor product $\otimes$ for functions, we get   from Equation \eqref{R}  
 \begin{eqnarray*}
 	R^{j'}_{k,l} (t_1,t_2) &=& \sum_{j=1}^d \int_0^{t_1}\int_0^{t_2} \tr e_k \left(e_j + \tilde{M}^{j,(1)}(t_1-u)\right) \tr \left(  e_j +\tilde{M}^{j,(1)}(t_2-v)\right)e_l \phi^{j'}_j(u) \delta_u(\mathrm dv)\mathrm du \\
 	&=& \sum_{j=1}^d  \tilde{R}^j_{k,l} *\tilde{\phi}^{j'}_j  (t_1,t_2)	
 \end{eqnarray*}
with,  
$$\tilde{R}^{j}_{k,l} (u,v) =   \tr e_k  \left(e_j + \tilde{M}^{j,(1)}(u)\right)  \otimes\tr \left(  e_j +\tilde{M}^{j,(1)}(v)\right)e_l .$$

\noindent To sum up, the vector $\tilde{C}^{\bu}_{k,l}= \left( \tilde{C}^{1}_{k,l}\ldots, \tilde{C}^{d}_{k,l}\right)$ satisfies   

$$\tilde{C}^{\bu}_{k,l}(t_1,t_2)= R^{\bu}_{k,l} (t_1,t_2) +  \tr\tilde{\phi} * \tilde{C}^{\bu}_{k,l} (t_1,t_2)$$
and has for solution: 
$$	\tilde{C}^{\bu}_{k,l}=  \tr \widetilde{\left(\sum_{r=0}^\infty \phi^{(*)r} \right)}*R^{\bu}_{k,l} =  \tr \widetilde{\left(\sum_{r=1}^\infty \phi^{(*)r} \right)}*\tilde{R}^{\bu}_{k,l}, $$
with $\tilde{R}^{\bu}_{k,l}=\tr( \tilde{R}^{1}_{k,l},\ldots, \tilde{R}^{d}_{k,l})$ and 
$R^{\bu}_{k,l}=\tr \tilde{\phi}* \tilde{R}^{\bu}_{k,l}.$

Finally, for $j'=0$, that is letting   $\phi^0=\lambda^0$, we get from Equation \eqref{cov2-1} 
\begin{eqnarray*}
\tilde{C}^0_{k,l} &=& \tr\tilde{\lambda}^0*\tilde{R}^{\bu}_{k,l} + 
\tr\tilde{\lambda}^0* \tr \widetilde{\left(\sum_{r=1}^\infty \phi^{(*)r} \right)}*\tilde{R}^{\bu}_{k,l}\\
&=& \tr\tilde{\lambda}^0*\tr \widetilde{\left(\sum_{r=0}^\infty \phi^{(*)r} \right)}*\tilde{R}^{\bu}_{k,l}.
\end{eqnarray*}
 
\end{proof}

\paragraph{Remarks}
	
\begin{itemize}
\item Each component $\tilde{C}^{j'}_{k,l}$ or $\tilde{M}^{(2)j'}_{k,l}$ satisfies an integral equation only including components of indices $k$ and $l$, such that this equation can be solved independently from the other equations corresponding to other pairs of indices.
\item We can consider that $\widetilde{\left(\sum_{r=0}^\infty \phi^{(*)r} \right)}$ is the inverse (with respect to both  matrix product and  convolution product  of distribution functions) of the signed matrix valued measure supported by the diagonal $\Delta$ of $\R_+^2$, namely $ I_d \delta_{0_{[2]}} - \tilde{\phi}$.
\item The covariance measures $C^{j'}_{k,l}$, $j'=0:d$ and $ 1\leq k, l\leq d$, are all positive measures and, hence, components $N^{j'}_k(t_1)$ and $N^{j'}_l(t_2)$ are always non negatively correlated.
\item Note finally that for  $t_1=t_2$, Solution \eqref{cov-2-2} reduces  to Solution  \eqref{cov1-2} given  for a single time.
\end{itemize}

\subsubsection{Example}\label{example2}

As in the example related to the mean equations and described in Section \ref{example1}, let us take the $\Gamma$ intensities with $\phi(t) =W f(t; \kappa,\theta) $ whose primitive function is $\Phi(t)=W\gamma\left(\kappa,t/\theta \right) /\Gamma(\kappa)$.  
For $\kappa=1$, we readily get 
	$$\tr \widetilde{\left(\sum_{r=1}^\infty \phi^{(*)r} \right)}(\mathrm du,\mathrm dv)=\frac{ e^{-u/\theta}}{u}\left(e^{ W u/\theta} -I_d\right)\delta_u (\mathrm dv)\mathrm du.$$
On the other hand, with 
 $\tilde{R}^{j}_{k,l}(u,v)=     \tr e_k  \left(I_d +  \tilde{M}^{(1)}(u)\right)e_j  \otimes\tr e_j \tr\left(  I_d +\tilde{M}^{(1)}(v)\right)e_l $
 and  $	\tilde{M}^{(1)}(u)=   
 \sum_{r=1}^\infty \frac{W^r}{ r! } \gamma( r,  u/\theta)$,  we get after transposition operations
 $$	\tilde{C}^{j'}_{k,l}(t_1,t_2)= \sum_{j=1}^d  \int_0^{t_1} \frac{ e^{-u/\theta}}{u}\tr e_{j'}\left(e^{W u/\theta} -I_d\right)e_{j} \tilde{R}^{j}_{k,l}(t_1-u,t_2-u) \mathrm du$$
which has no known analytic expression but which can be numerically approximated.

\subsection{Lebesgue Decomposition of the Covariance Measure}

The covariance functions $C$ of a  point process $N$ on a measurable space $(D, \mathcal{D})$  defined by $C(A\times B)= \mathbf E(N(A)N(B))- \mathbf E(N(A))\mathbf E(N(B)), \ A,B \in \mathcal{D} $, is advantageously considered from a statistical point of view as a  signed moment measure of order $2$, that is $C(\mathrm du,\mathrm dv)$ on $D^2$ or simply  $C(t_1,t_2)=C([0,t_1]\times [0,t_2])$. It is well known that  it has at least a singular part $^SC$ whose support is the diagonal $\Delta=\{ u_{[2]}=(u,u), u\in D\}$ of $D^2$. The singular and continuous parts   may take different forms depending on the probability distribution of $N$.  We develop this issue for our multivariate Hawkes process and prove that the singular part is also supported by $\Delta$ but has an additional part to the usual first order moment. The absolutely  continuous part is also determined.  For that purpose we recourse once again to a measure and generalized function context.

\begin{proposition} Let the matrix valued function be continuous and non negative. Then the covariance measure $\tilde{C}^{j'}_{k,l}$ is locally finite, it has 
for all  $j'= 0:d$ and $1\leq  k,l \leq d$, an absolute continuous part, namely  
 
 \begin{align*}     
 ^{AC}\tilde{C}^{j'}_{k,l}(\mathrm du,\mathrm dv) = 
 \Big( &
 \tilde{m}^{j',(1)}_k(u) \,\tilde{m}^{k,(1)}_l(v-u) \\
 & +  \newline \sum_{j=1}^d \int_0^{u\wedge v}\tilde{m}^{j',(1)}_j(w)\tilde{m}^{j,(1)}_k(u-w) \tilde{m}^{j,(1)}_l(v-w)  \mathrm dw 
 \Big)
    \mathrm du \,\mathrm dv.
\end{align*}

\noindent The singular part is supported by the diagonal $\Delta$ of $\R_+^2$ and is written: 
$$^S\tilde{C}^{j'}_{k,l}(\mathrm du,\mathrm dv) =  \left(
 \delta_{k=l}\tilde{m}^{j',(1)}_k+ \tilde{m}^{j',(1)}_l*\tilde{m}^{l,(1)}_k\right)(u)\,\delta_u(\mathrm dv) \,\mathrm du.$$
\end{proposition}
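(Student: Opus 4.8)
The plan is to start from the explicit solution of Proposition~\ref{sol-cov-2} and read off the Lebesgue decomposition of $\partial_{t_1}\partial_{t_2}\tilde C^{j'}_{k,l}$ directly. Writing the diagonal convolution out and using $\widetilde{\sum_{r\ge 1}\phi^{(*)r}}=\widetilde{\tilde m^{(1)}}$, the solution \eqref{cov-2-2} reads, for $t_1\le t_2$,
\[
\tilde C^{j'}_{k,l}(t_1,t_2)=\sum_{j=1}^d\int_0^{t_1}\tilde m^{j',(1)}_j(w)\,\tilde R^{j}_{k,l}(t_1-w,t_2-w)\,\mathrm dw,\qquad \tilde R^{j}_{k,l}(u,v)=\bigl(\mathbf 1_{k=j}+\tilde M^{j,(1)}_k(u)\bigr)\bigl(\mathbf 1_{l=j}+\tilde M^{j,(1)}_l(v)\bigr).
\]
First I would expand the product $\tilde R^{j}_{k,l}$ into its four additive terms and split $\tilde C^{j'}_{k,l}$ accordingly. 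The covariance measure is then obtained by applying $\partial_{t_1}\partial_{t_2}$ to each piece, sorting the outcome into a part absolutely continuous with respect to $\mathrm du\,\mathrm dv$ and a part carried by the diagonal $\Delta$.

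For the absolutely continuous part I would treat the two terms of $\tilde R^{j}_{k,l}$ that depend on the second variable $v$, namely $\mathbf 1_{k=j}\tilde M^{j,(1)}_l(v)$ and $\tilde M^{j,(1)}_k(u)\tilde M^{j,(1)}_l(v)$. Differentiating under the integral and in the upper limit, using $\tilde M^{j,(1)}(0)=0$ to cancel the Leibniz boundary terms and $(\tilde M^{j,(1)})'=\tilde m^{j,(1)}$, the first yields $\tilde m^{j',(1)}_k(u)\,\tilde m^{k,(1)}_l(v-u)$ (the $\sum_j$ collapsing via $\mathbf 1_{k=j}$) and the second yields $\sum_{j=1}^d\int_0^{u\wedge v}\tilde m^{j',(1)}_j(w)\tilde m^{j,(1)}_k(u-w)\tilde m^{j,(1)}_l(v-w)\,\mathrm dw$, which is exactly $^{AC}\tilde C^{j'}_{k,l}$. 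Continuity of $\phi$ ensures $\tilde m^{(1)}=\sum_{r\ge1}\phi^{(*)r}$ is an atomless, locally integrable function, so these two pieces contribute no mass on $\Delta$ and the density is locally integrable.

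For the singular part I would treat the remaining two terms of $\tilde R^{j}_{k,l}$, the constant $\mathbf 1_{k=j}\mathbf 1_{l=j}$ and $\mathbf 1_{l=j}\tilde M^{j,(1)}_k(u)$, which depend on $u=t_1-w$ only. After integration they contribute to $\tilde C^{j'}_{k,l}$ the quantities $\mathbf 1_{k=l}\int_0^{t_1}\tilde m^{j',(1)}_k(w)\,\mathrm dw$ and $\int_0^{t_1}\bigl(\tilde m^{j',(1)}_l*\tilde m^{l,(1)}_k\bigr)(w)\,\mathrm dw$, the latter because $\tfrac{d}{dt}\bigl(\tilde m^{j',(1)}_l*\tilde M^{l,(1)}_k\bigr)=\tilde m^{j',(1)}_l*\tilde m^{l,(1)}_k$ (again $\tilde M^{l,(1)}_k(0)=0$). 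Over the region $t_1\le t_2$ these depend on $t_1\wedge t_2$ alone, and a cumulative of the form $\int_0^{t_1\wedge t_2}g(w)\,\mathrm dw$ is precisely the cumulative of the diagonal measure $\widetilde g(\mathrm du,\mathrm dv)=g(u)\delta_u(\mathrm dv)\,\mathrm du$ in the notation fixed before Proposition~\ref{sol-cov-2}. Summing the two gives the announced $^S\tilde C^{j'}_{k,l}$.

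The step I expect to be the main obstacle is the clean separation of the diagonal mass: the two $u$-only terms are invisible to the interior density (where $\partial_{t_2}$ of a $t_1$-function vanishes), so their whole contribution must be shown to concentrate on $\Delta$ with the correct density, i.e.\ that $\int_0^{t_1\wedge t_2}g$ forces the measure $\widetilde g$ and leaves nothing in $\{u<v\}$. This is exactly where the continuity hypothesis on $\phi$ enters, since an atom of $\tilde m^{(1)}$ would make the genuinely continuous terms also deposit mass on $\Delta$ and corrupt the decomposition. Local finiteness is then immediate: $\tilde m^{(1)}$ and hence $\tilde m^{j',(1)}_l*\tilde m^{l,(1)}_k$ are locally integrable, so both $^{AC}\tilde C^{j'}_{k,l}$ and the diagonal density are locally integrable, and $\tilde C^{j'}_{k,l}$ is locally finite.
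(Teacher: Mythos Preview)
Your proposal is correct and follows essentially the same route as the paper: start from the explicit solution \eqref{cov-2-2}, expand $\tilde R^{j}_{k,l}$ into its four additive pieces, and identify for each piece whether its contribution to the cumulative $\tilde C^{j'}_{k,l}(t_1,t_2)$ has the form $\int_0^{t_1}\!\int_0^{t_2} f(u,v)\,\mathrm du\,\mathrm dv$ (absolutely continuous) or depends on $t_1$ alone (diagonal singular), invoking continuity of $\phi$ to ensure $\tilde m^{(1)}=\sum_{r\ge1}\phi^{(*)r}$ is a genuine function. The paper's presentation works directly with the cumulatives rather than phrasing it as applying $\partial_{t_1}\partial_{t_2}$, but the computations and the four-term bookkeeping are identical.
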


\begin{proof} 
We  first recall some results on  differential calculus as well as some notations related to   generalized functions with support on $\R_+^2$.
	
The Heaviside function $H(u,v)=\mathbf{1}_{\R_+^2}(u,v)$ considered as a distribution (precisely the restricted Lebesgue measure) satisfies the equation  
	 $\partial_u\partial_v H (\mathrm du,\mathrm dv)= \delta_0(\mathrm du)\delta_0(\mathrm dv)=\delta_{0_{[2]}}(\mathrm du,\mathrm dv)$.	
\bigskip

Let us assume firstly that $j' \ne 0$. The convergent series   $h(u)=  \left(\sum_{r=1}^\infty \phi^{(*)r}\right)(u)= \tilde{m}^{(1)}(u)$ if finite,  is necessarily  continuous since it is written 
as the convolution of a generalized function with a continuous one, that is to say $h=\left(\sum_{r=0}^\infty \phi^{(*)r}\right) *\phi$, inheriting  therefore the continuity property of $\phi$.

Then, emphasizing that the constant $1$ is actually a function $1(u)=1_{[0,\infty[}(u)$,  we can develop  $ \tilde{R}^{j}_{k,l}$ as follows
$$ \tilde{R}^{j}_{k,l}(u,v)=  \delta_{k=l=j} 1(u)1(v) + \delta_{j=l} \tilde{M}^{l,(1)}_k(u)1(v)
+\delta_{j=k}1(u)\tilde{M}^{k,(1)}_l(v) + \tilde{M}^{j,(1)}_k(u)\tilde{M}^{j,(1)}_l(v).
  $$
Moreover, according to Proposition \ref{sol-cov-2} with $\tilde{h}(\mathrm du,\mathrm dv)= h(u)\delta_u(\mathrm dv)\mathrm du$, we have

$$\tilde{C}^{j'}_{k,l}(t_1,t_2) = \left(\sum_{j=1}^d \tilde{h}^{j'}_j * \tilde{R}^{j}_{k,l}\right)(t_1,t_2). $$

Now, we separately compute each term of $\tilde{h}^{j'}_j * \tilde{R}^{j}_{k,l}$ using the expression of $\tilde{R}^{j}_{k,l}$ provided above. We recall that $\tilde{m}^{(1)} $  has support $\R_+$, and we first get 
\begin{eqnarray*}
\left(\tilde{h}^{j',(1)}_j * [1(.)1(.)]\right)(t_1,t_2) &=& \int_0^{t_1} \tilde{m}^{j',(1)}_j(u)\mathrm du\\
\left(\tilde{h}^{j'}_j * [\tilde{M}^{l,(1)}_k(.)1(.)]\right)(t_1,t_2)
 &=& \int_0^{t_1} h^{j'}_j(u) \tilde{M}^{l,(1)}_k(t_1-u) \mathrm du  \\
&=& \int_0^{t_1}  \int_0^{t_1}\tilde{m}^{j',(1)}_j(u) \tilde{m}^{l,(1)}_k(w-u) \mathrm du \mathrm dw\\
&=&  \int_0^{t_1} \left(\int_0^{w} \tilde{m}^{j',(1)}_j(u) \tilde{m}^{l,(1)}_k(w-u) \mathrm du \right) \mathrm dw\\
&=& \int_0^{t_1} \tilde{m}^{j',(1)}_j*\tilde{m}^{l,(1)}_k (w)  \mathrm dw.
\end{eqnarray*} 
Let us observe that both components depend only on the first variable $t_1$ meaning that both measures have the diagonal $\Delta$ as support.

For the third term, we obtain
\begin{eqnarray*}
\left(\tilde{h}^{j'}_j * [1(.)\tilde{M}^{k,(1)}_l(.)]\right)(t_1,t_2) &=& \int_0^{t_1} h^{j'}_j(u) \tilde{M}^{k,(1)}_l(t_2-u) \mathrm du   \\
&=& \int_0^{t_1} \int_0^{t_2} \tilde{m}^{j',(1)}_j(u) \tilde{m}^{k,(1)}_l(v-u) \mathrm du\mathrm dv. 
\end{eqnarray*}

\noindent This means that this measure component is absolutely continuous with respect to the Lebesgue measure on $\R_+^2$ with density  $\tilde{m}^{j',(1)}_j(u) \,\tilde{m}^{k,(1)}_l(v-u)$. 

For the last term,  we get 
\begin{eqnarray*}
\left(\tilde{h}^{j'}_j * [\tilde{M}^{j,(1)}_k(\cdot)\tilde{M}^{j,(1)}_l(\cdot)]\right)(t_1,t_2) &=& 
\int_0^{t_1} h^{j'}_j(u) \,\tilde{M}^{j,(1)}_k(t_1-u) \,\tilde{M}^{j,(1)}_l(t_2-u) \,\mathrm du   \\[7pt]
&=& \int_0^{t_1}  \int_0^{t_1} \int_0^{t_2} \tilde{m}^{j',(1)}_j(u)\,\tilde{m}^{j,(1)}_k(w-u) \\&& \qquad \times \tilde{m}^{j,(1)}_l(v-u)  \mathrm du\,\mathrm dw\,\mathrm dv\\[7pt]
&=&\int_0^{t_1}  \int_0^{t_2}\Big( \int_0^{v\wedge w}\tilde{m}^{j',(1)}_j(u)\,\tilde{m}^{j,(1)}_k(w-u) \\&& \qquad \times  \tilde{m}^{j,(1)}_l(v-u)  \mathrm du\Big) \mathrm dw\,\mathrm dv.
\end{eqnarray*} 
Therefore this last component is absolutely continuous with density 
\[ \int_0^{v\wedge w}\tilde{m}^{j',(1)}_j(u)\tilde{m}^{j,(1)}_k(w-u) \tilde{m}^{j,(1)}_l(v-u)  \mathrm du.\]

To sum up, $\tilde{C}^{j'}(\mathrm du,\mathrm dv)$ has a singular component supported by the diagonal $\Delta$, namely

$$^S\!\tilde{C}^{j'}_{k,l}(\mathrm du,\mathrm dv) =  \left(
\delta_{k=l}\tilde{m}^{j'}_k+ \tilde{m}^{j'}_l*\tilde{m}^{l,(1)}_k\right)(u)\,\delta_u(\mathrm dv) \,\mathrm du ,$$
	
\noindent and the absolute continuous component is written

\begin{align*}
  ^{AC}\!\tilde{C}^{j'}_{k,l}(\mathrm du,\mathrm dv) =  \Big(&\tilde{m}^{j',(1)}_k(u)\, \tilde{m}^{k,(1)}_l(v-u) \\
&+  \sum_{j=1}^d \int_0^{u\wedge v}\tilde{m}^{j',(1)}_j(w)\,\tilde{m}^{j,(1)}_k(u-w)\, \tilde{m}^{j,(1)}_l(v-w)\,  \mathrm dw \Big) \mathrm du\, \mathrm dv.
\end{align*}

If $j'=0$, we get the same formulas by defining $h=(\sum_{r=0}^{+\infty}\phi^{(*)r})*\lambda^0$ which is still continuous as soon as $\lambda^0$ is continuous.

\end{proof}

\paragraph{Remarks}
\begin{itemize}
\item It is worth  noticing that the singular and continuous parts satisfy the same integral equation when considering  the associated components of $\tilde{R}^j_{k,l} $. 
\item  Fundamentally, the first and second moments roughly satisfy the same type of integral equation, save that the space dimension increases. It is sensibly expected that the moments of higher order involving several times, would satisfy similar integral equations with explicit solutions having more complex singular parts supported by one dimensional diagonal and semi diagonal hyperplanes, etc.
\end{itemize}

\section{Schemes for Numerical Approximation} \label{section-approximation}

In this section, we describe some algorithms for computing the numerical solutions of the equations of the first two moments and the Laplace transforms developed in the previous sections. 
Since these functions are time-dependent, we use the following discretization scheme: For $T>0$, we consider the  regular partition of the time interval $[0,T]$ consisting of successive sub-intervals with length $\tau=1/M >0$, $M\in\mathbb N_*$, 
that is $0 < \tau < 2\tau< ... < M\tau=T$, and we call it the grid denoted by $\mathcal{G}= \{0,\ldots,M\}.$

\subsection{Time Approximation of Laplace Transforms} 	

Since the solution of  the Laplace transform  of any Hawkes process driven by $(\lambda^0,\phi)$, is simply expressible as  the solution of the basic system of Volterra-like equations of Theorem \ref{thm:1} (see the corresponding remark), we propose the algorithm described below to approach this basic system on any time interval $[0,T]$. We further assume that the multi-time Laplace transforms $\tilde{L}^{j',(k)}(a,t)$ are continuous with respect to the time variable $t$, such that the limits of their discrete time approximation remain meaningful. We also assume that the matrix parameter $a$ is fixed and, hence, we often omit 
it in the sequel.
 
\subsubsection{Single-time Laplace Transform Approximation}

For any fixed vector parameter $ \tr a=(a^1_1,\ldots,a^1_d)$, we propose to approximate the values of $\tilde{L}^{j',(1)} (a^1_\bu,.)$, $j'=1:d$, on the grid $\mathcal{G}$, that is $\bar{L}^{j',(1)}_a(m) \approx \tilde{L}^{j',(1) }(a,m\tau)$. From  Equation \eqref{Eq-tildeL} in Theorem \ref{thm:1} restricted to the case $k=1$, and after a time inversion, we get, for $j'=0:d$, 
\begin{equation*}
	\tilde{L}^{j',(1)}(a,t)=
	\exp\left( \sum_{j=1}^d
	\int_{0}^{t}\left(
	e^{-\, a_j} 
	L^{j,(1)}(a,u) -1
	\right)
	\phi_j^{j'}(t-u)\,\mathrm du
	\right).
\end{equation*}

\noindent Hence, we trivially set, for all $j'=1:d$,
\begin{align*}
\bar{L}^{j',(1)}_a(0)&=1  \\
\bar{L}^{j',(1)}_a(1)&=\exp \left( \tau\sum_{j=1}^d \left( e^{-a_j} \bar{L}^{j,(1)}_a(0) -1\right) \phi^{j'}_j(\tau)\right)\\
\bar{L}^{j',(1)}_a(2)&=\exp \left( \tau \sum_{j=1}^d \left( e^{-a_j} \bar{L}^{j,(1)}_a(0) -1\right) \phi^{j'}_j(2\tau)  +
\left( e^{-a_j}\bar{L}^{j,(1)}_a(1) -1 \right) \phi^{j'}_j(\tau) \right),
\end{align*}
and  for $m=3:M$,
\begin{equation}\label{approx1}
\bar{L}^{j',(1)}_a(m)=\exp \left( \tau \sum_{j=1}^d \sum_{r=0}^{m-1} \left( e^{a_j} \bar{L}^{j,(1)}_a(r) -1\right)  \phi^{j'}_j((m-r)\tau) \right).
\end{equation}

\subsubsection{Two-times Laplace Transform Approximation}

In this case, the general formula in Theorem \ref{thm:1} 
with the $d\times 2$-matrix $a =(a^1_\bu,a^2_\bu)$ and the time vector $t=(t_1,t_2)$ becomes, for $j'=0:d$,
\begin{align*}
	\tilde{L}^{j',(2)} (a,t) =\exp \Bigg(
	\sum_{j=1}^d \Bigg( & \int_0^{t_1} 
	 \left[ e^{-(a^1_j +a^2_j)} \tilde{L}^{j,(2)} (a,(t_1-u,t_2-u)) -1 \right] \phi^{j'}_{j} (u)\mathrm du\\
  &+ \int_{t_1}^{t_2} 
	 \left[e^{-a^2_j} \tilde{L}^{j,(1)} (a^2_\bu,  (t_2-u)) -1 \right] \phi^{j'}_{j} (u)\mathrm du \Bigg) \Bigg).
\end{align*}
In the same way as in the previous subsection, for any fixed $a$, we propose to approximate $\tilde{L}^{j',(2)}(a,.)$  on the nodes $(m_1\tau,m_2\tau)$, $0\leq m_1 <m_2<M$, of the bi-dimensional grid $\mathcal{G}^2$ by 
$\bar{L}^{j',(2)}_{a}( m_1,m_2)$.

Considering the previous equation,  we have essentially to approximate $\bar{L}^{j',(2)}_{a}( m_1,m_2)$ recursively only along secondary diagonals $(m_1-r,m_2-r)$, $r=0:m_1$. Clearly, we  have for all $j'=1:d$ and all $m=0:M$,

$$\bar{L}^{j',(2)}_{a}(0,m)= \bar{L}^{j',(1)}_{a^{2}_\bu}(m).$$ 

Hence, after the time inversion $v=t_1-u$, we propose the following recursive formulas:
\begin{align*}
\bar{L}^{j',(2)}_{a}( 1,m+1)= \exp\Bigg( \tau \sum_{j=1}^d\Bigg( & \left[e^{-(a^1_j +a^2_j)}\bar{L}^{j,(2)}_{a}(0,m) -1\right]\phi^{j'}_j(\tau) \\
& + \sum_{r'=1}^{m} \left[e^{-a^2_j}\bar{L}^{j,(1)}_{a^2_\bu}(r') -1\right] \phi^{j'}_j((m+1-r')\tau) \Bigg)\Bigg),
\end{align*}
and for $1<r\leq m$
\begin{equation} \label{approx2}
\begin{split}
\bar{L}^{j',(2)}_{a}( r,m+r)= \exp\Bigg( \tau \sum_{j=1}^d  \Bigg( &\sum_{r'=0}^{r-1} \left[  e^{-(a^1_j +a^2_j)}\bar{L}^{j,(2)}_{a}(r',m+r') -1\right] \phi^{j'}_j((r-r')\tau) \\
& + \sum_{r'=r}^{m+r} \left[e^{-a^2_j}\bar{L}^{j,(1)}_{a^2_\bu}(r') -1\right] \phi^{j'}_j((m+r-r')\tau) \Bigg)\Bigg).
\end{split}
\end{equation}

Thus, using the latter formula, for any $0<m_1 <m_2 \leq M$ and $j'=1:d$, we finally get:
$$\bar{L}^{j',(2)}_{a}( m_1,m_2)= \bar{L}^{j',(2)}_{a}( m_1, (m_2-m_1)+m_1).$$

\subsubsection{Remarks}
 \begin{enumerate}
     \item One may  consider that the  previous approximations $\bar{L}^{j',(k)}_{a}$, $k=1:2$,  simply constitute the first step to initialize  an iterative process using the implicit integral equation of Theorem \ref{thm:1} to yield successive approximations, using the well known fixed-point theorem.   
     \item Using the same fixed-point theorem, one may as well initialize the iteration processes with the well known analytic form of the Laplace transforms of the $d$ multi-type Poisson processes each driven by density $\phi^{j'}_\bu$, $j'=1:d $, and compare the two approaches.
     \item For a general Hawkes process driven by $(\lambda^0,\phi)$, we have only to substitute $\phi^{j'}_j$ for $\lambda^0_j$  in Equations \eqref{approx1} and \eqref{approx2}  
     \item Even tedious, similar procedures of numerical approximations can be applied to higher-order multi-time Laplace transforms 
    $\tilde{L}^{j',(k)} (a, (t_1,\ldots,t_n))$, $2 < k \leq n$.   
 \end{enumerate}

 \subsection{Approximation of the First Two Moment Functions}
 
With regard to the calculation of the  mean and covariance functions of  Hawkes processes,   we have to essentially deal with  the expression  of the fundamental matrix function $\sum_{r=0}^{\infty}\phi^{(*)r}$. 
For specific models, the  elements of the  matrix function $\phi=(\phi^{j'}_j)$ can be easily  convoluted analytically with each other at  any order, and even yield an explicit expression for the series; see Examples \ref{example1} and \ref{example2}.  Otherwise, the different mean and covariance formulas  in Equations \eqref{means}, \eqref{cov-2-2} and \eqref{cov-2-2-2} can be simply approximated on any given grid $\mathcal{G}$ by  limiting the fundamental matrix series to 
 $\sum_{r=0}^K \phi^{(*)r}$ with a large enough integer $K$.

Indeed,  plain numerical methods can  be implemented for convolution product of functions. For example, if  $f_1$ and $f_2$ defined on $\R^+$ have values $f_1(m_1\tau)$ and $f_2(m_2\tau) $ on the grid $\mathcal{G}$ then the values of $f_3=f_1*f_2$ can be approximated by 
 $\bar{f}_3 (m\tau)= \tau\sum_{r=1}^m f_1(r\tau) f_2( (m+1-r)\tau)$
 on the same grid. 
 
 This algorithm extends easily to convolution products of matrix functions.
 Note also that the expression  $\widetilde{\left(\sum_{r=0}^{\infty}\phi^{(*)r}\right)} (t_1,t_2)$ related to the  covariance solution in Proposition \ref{sol-cov-2} mainly consists in restraining integration over the diagonal of $\R_+^2$.
 
 Even if all calculation procedures presented above deal with generic multidimensional Hawkes process, what follows illustrate these procedures for a specific model of a 2D-Hawkes process, for which we give some numerical results about its first two moments.  

\subsubsection{Specification of a 2D-Hawkes process}

Thus, let us consider  the following 2D-Hawkes process driven by the functional parameter  $(\lambda^0,\phi)$, where the baseline  intensity $\lambda^0$ (that may also be called the ignition or immigration intensity) is periodic, and each component  of the basic matrix function $\phi$ is a beta-like distribution, more precisely: 
  \begin{equation} \label{model}\begin{split}
 \lambda^0_i (t) &=  a_i +b_i \sin(c_i t), \\
 \phi^{j'}_j (t) &=  \alpha^{j'}_j t^{\beta^{j'}_j} \ [\gamma^{j'}_j - t]^{\rho^{j'}_j}  \ \text{if} \  t \leq \gamma^{j'}_j \ \text{and 0 otherwise},
 \end{split}\end{equation}
where all parameters are real numbers, $i=1:2$, $a_i >0$,  $a_i >b_i$ and $1\leq j',j\leq 2$.
 Observe that  $(\lambda^0,\phi)$ depends on a finite number of scalar parameters and thus is considered as  a classical parametric model for the sequel. 
 For illustration purpose, parameters $a, b, c, \alpha, \beta,\gamma, \rho$ were randomly drawn, yielding:
\[ 
\begin{array}{c|ccccccccccc}
i \ \text{or } j & a    &   b   &   c   & \alpha^1_\bu&\alpha^2_\bu&\beta^1_\bu&\beta^2_\bu&\gamma^1_\bu&\gamma^2_\bu&\rho^1_\bu & \rho^2_\bu \\
\hline
1 & 1.057 & 0.031 & 0.845 &  0.073      &  0.046     &   0.060   & 1.254    &     1.576  &   1.831    &    0.598  &   0.897     \\
2 & 1.061 & 0.093 & 0.817 & 0.050       &  0.096     &   1.897   &  1.923   &    0.369   &   0.182    &    0.789  &     0.713
\end{array}
\]
The whole time interval is defined with  $T=10$, the mesh length is $\tau=0.005$ and the infinite sum is stopped at $r=500$. 
 
 Figure \ref{fig1} shows the shapes of the functional parameters constituting $\lambda^0$ and $\phi$.  The periodic shape of $\lambda^0$ was chosen to picture  a potential
 seasonality  effect in the ignition of epidemics. The different beta-like shapes of $\phi$ components presented here may represent differences in the characteristics of the epidemic dynamics in and between different regions (e.g., heterogeneity in latency, virulence, infection duration, spatial connectivity, properties of pathogen strains, social and care conditions, etc.). Figure  \ref{fig2} shows the behavior of the fundamental series, and illustrates in this case the rapid extinction  of the epidemic if no ignition / immigration occurs.
 
 \begin{figure}[H] 
\begin{center}
\hspace*{-0.15cm}
\includegraphics[width=0.5\textwidth]{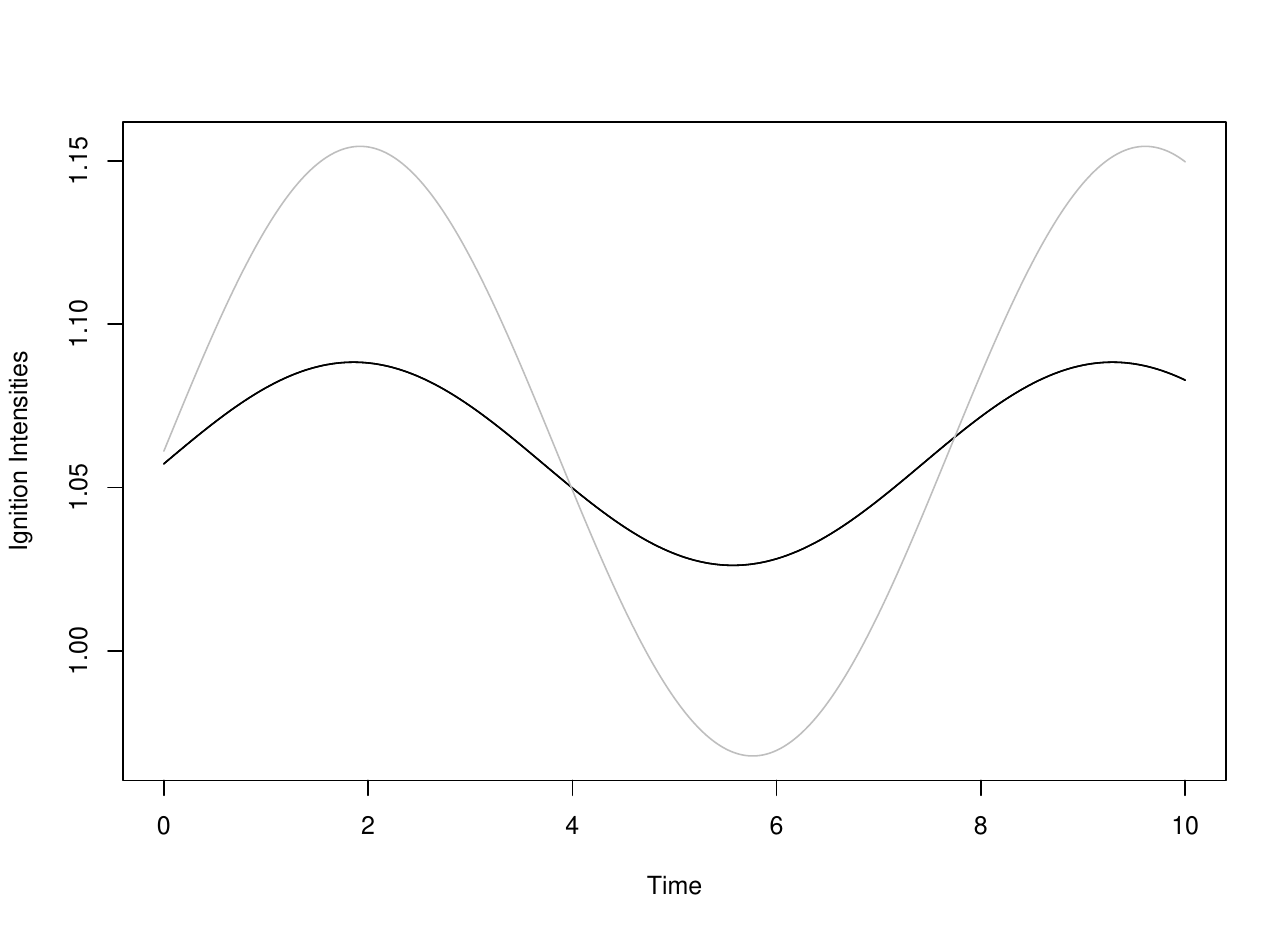}
\hspace*{-0.25cm}
\includegraphics[width=0.5\textwidth]{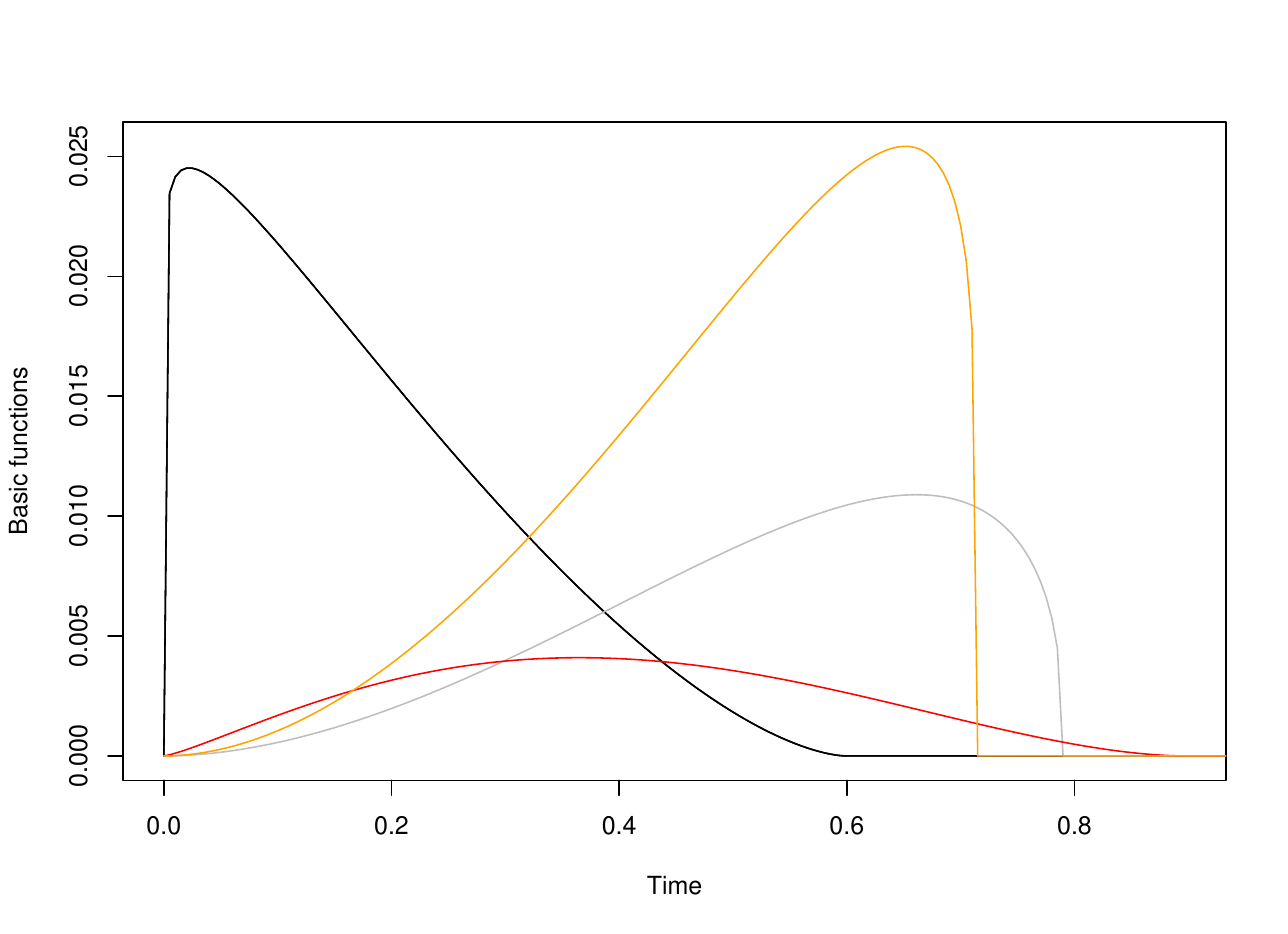}
\caption{Functional parameters constituting $\lambda^0$ and $\phi$ specified in Equation \eqref{model}, and drawn over the time interval $[0,10]$.
Left panel: Ignition intensities $\lambda^0_j (t) \equiv \phi^0_j (t)$, $j=1,2$. Right panel: Basic 2$\times$2-matrix functions $\phi^{j'}_j(t)$, $1\leq j',j \leq 2$. Black and grey colors correspond to column $j'=1$, red and orange to column $j'=2$.  }   \label{fig1}
 \end{center}
\end{figure}

\begin{figure}[H] 
\begin{center}
\includegraphics[width=0.55\textwidth]{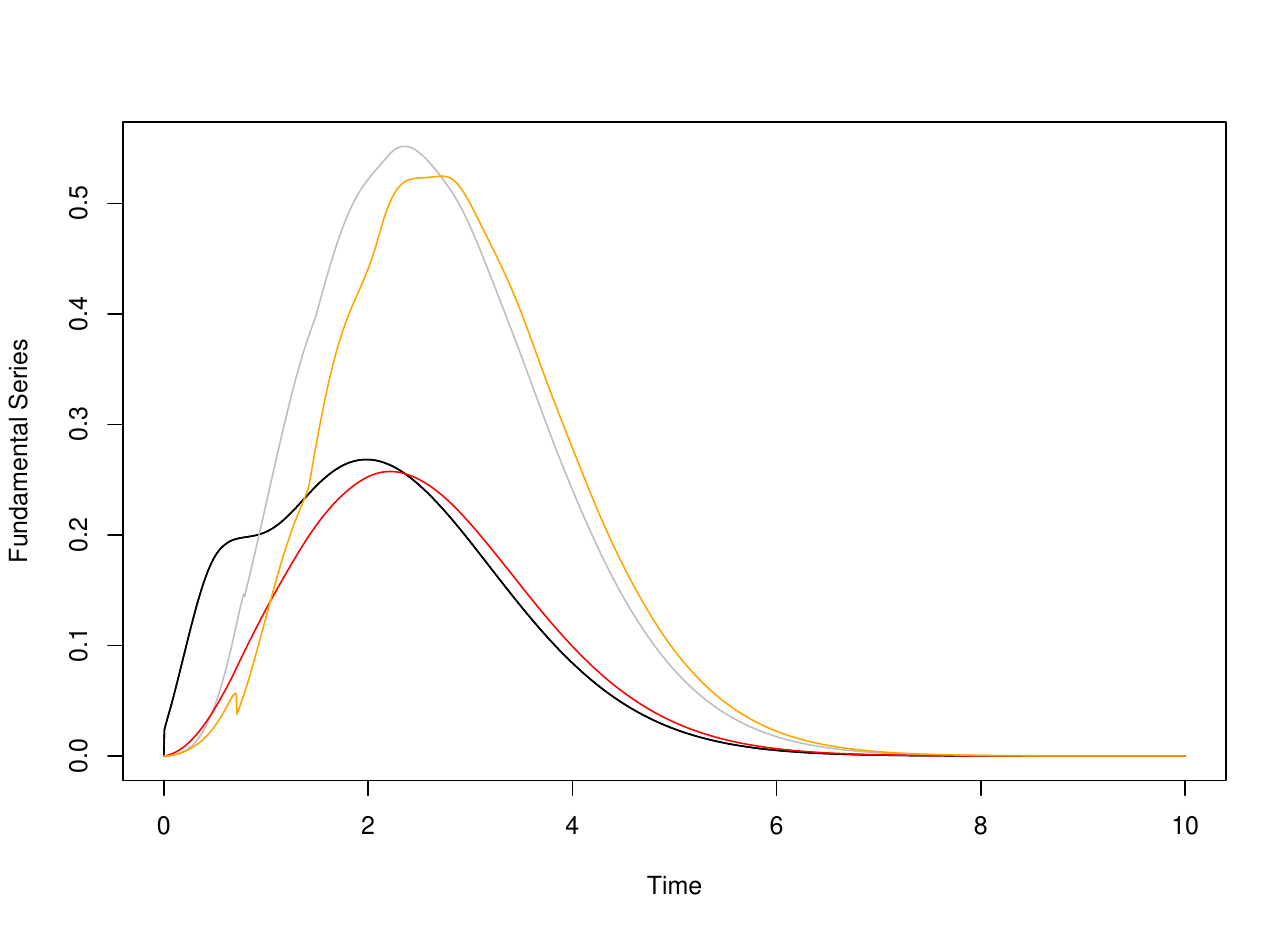 }
\caption{ Fundamental series $\sum_{r=1}^\infty \phi^{(*)r}(t)$, for $ t \in [0,10] $, approximated up to order $r=500$.
 Black and grey colors correspond to column $j'=1$, red and orange to column $j'=2$.  }   \label{fig2}
 \end{center}
\end{figure}

\subsubsection{Calculation of the Mean functions }

The left panel of Figure \ref{fig3} gives the basic intensity ratios for $1\leq j,j'\leq 2$, that is to say: $\tilde{m}^{j',(1)}_j (t)/ \left(\tilde{m}^{j',(1)}_1(t)+ 
\tilde{m}^{j',(1)}_2(t)\right)$. We observe the fast convergence of the component ratios towards two limits. This phenomenon is also observed for all models we have tried out and is probably related to Perron theorem on positive matrix.  The right panel shows the basic mean functions $\tilde{M}^{j',(1)}_j (t)$. We observe in this example finite asymptotic limits in the absence of an ignition / immigration process.

Figure \ref{fig4} shows, on the left, a subtle effect of the periodicity of the baseline intensity on the behavior of the intensities $\tilde{m}^{0,(1)}_j$,  $j=1:2$, and, on the right, the increasing mean components $\tilde{M}^{0,(1)}_j$, $j=1:2$, of the original Hawkes process driven by  $(\lambda^0, \phi)$.
  
\begin{figure}[H] 
\begin{center}
\hspace*{-0.15cm}
\includegraphics[width=0.5\textwidth]{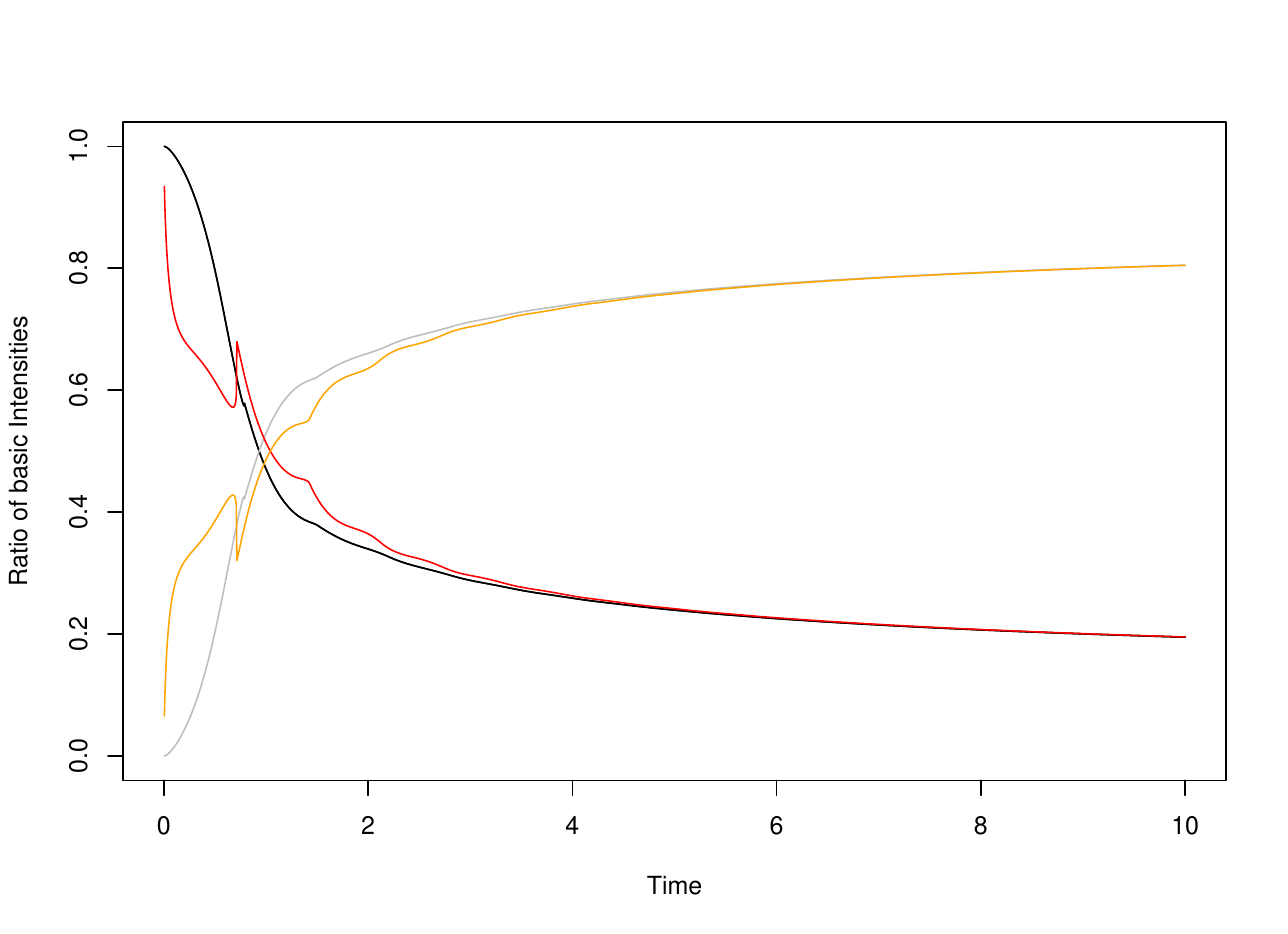}
\hspace*{-0.25cm}
\includegraphics[width=0.5\textwidth]{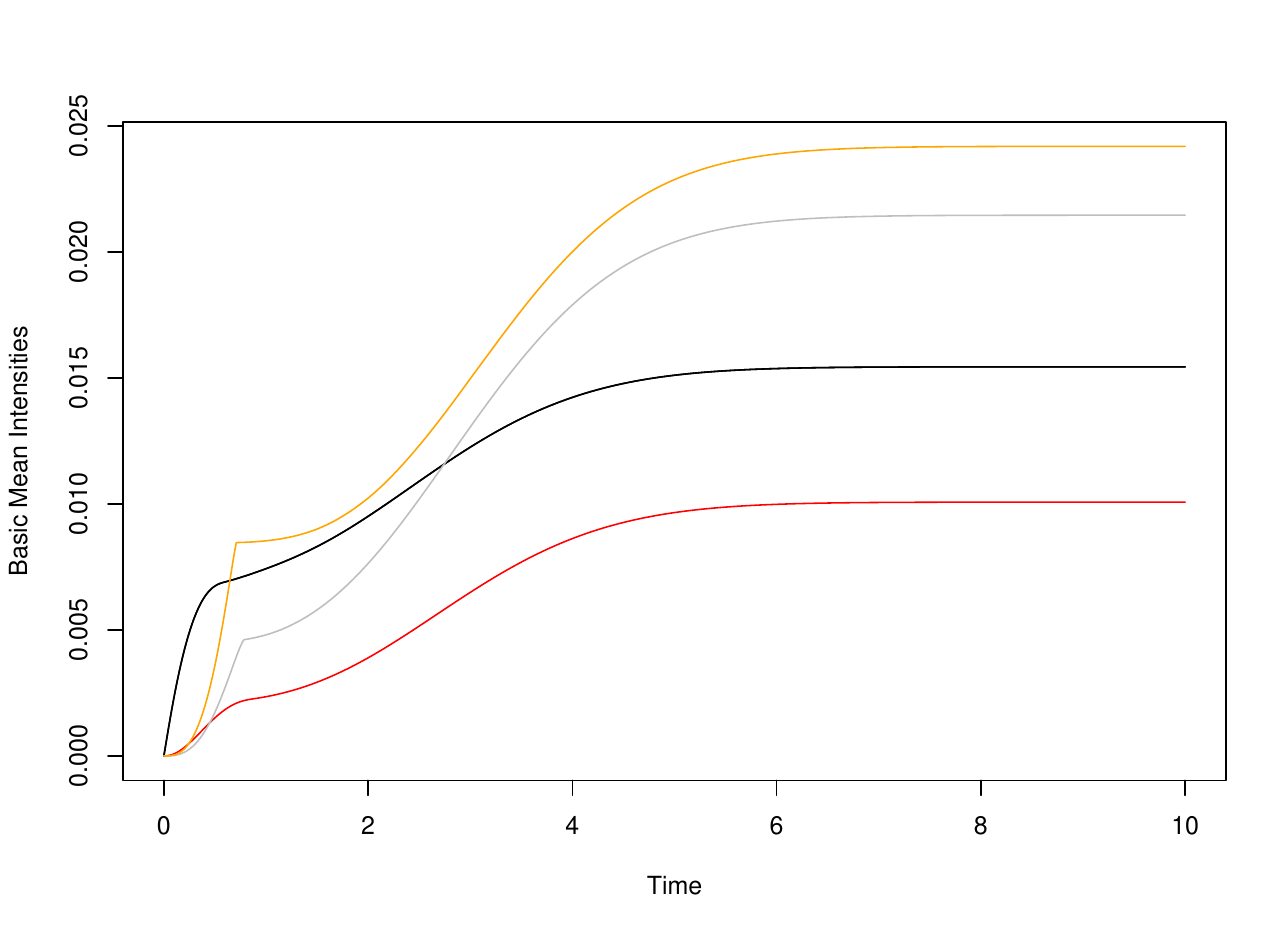}
\caption{ 
Ratios of basic intensities (left panel) and corresponding cumulative intensities, i.e., the mean functions (right panel), over the time interval $[0,10]$.  Black and grey colors hold for $j'=1$ while red and orange hold for $j'=2$.}  \label{fig3}
 \end{center}
\end{figure}

\begin{figure}[H]
\begin{center}
\hspace*{-0.15cm}
\includegraphics[width=0.5\textwidth]{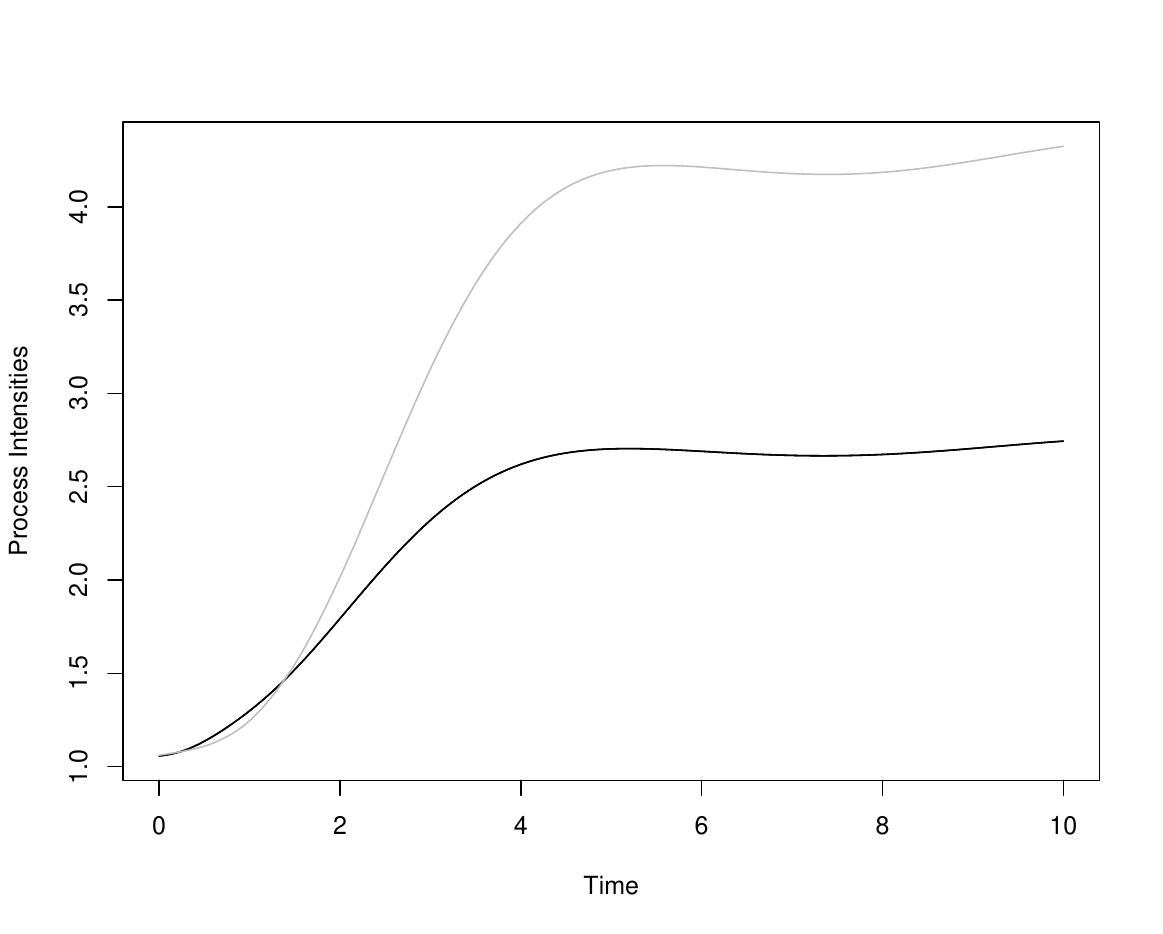}
\hspace*{-0.25cm}
\includegraphics[width=0.5\textwidth]{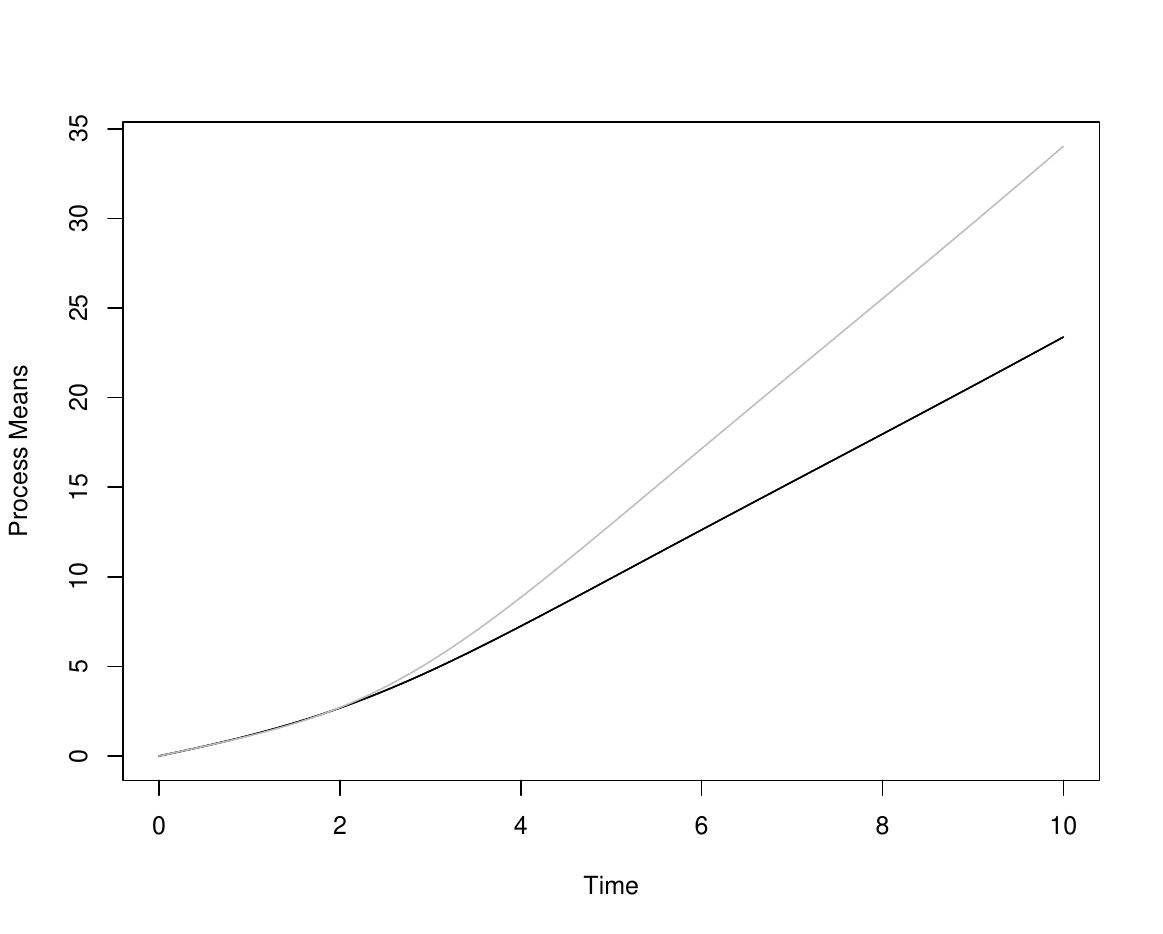}
\caption{ First order moments of the 2D-Hawkes process driven by the functional parameter $(\lambda^0, \phi)$, over the time interval $[0,10]$.
Left panel: The two components of the intensities $\tilde{m}^{0,(1)}_j$, $j=1:2$. 
Right panel:  The correspondent cumulative intensities, that is the mean functions $\tilde{M}^{0,(1)}_j$, $j=1:2$.  } \label{fig4}
 \end{center}
\end{figure}
  
\subsubsection{Calculation of the Covariance Structure}

From the algorithmic point of  view, the computation of the covariance structure is  very similar to that of the  mean. It is however much longer since the covariance function depends on two variates. Figure \ref{fig5} shows two functional elements of  the fundamental covariance structure, namely  the inner-region variance $\tilde{C}^{j'}_{1,1} (t,t)$ (left) and  the inter-regions covariance $\tilde{C}^{j'}_{1,2} (t,t)$ (right). Once again, due to the beta like form of $\phi$, we observe  asymptotic finite values pointing out the extinction in the absence of immigration.  Figure \ref{fig6} represents images with level contours of two components of the $2\times2$ matrix function of correlations, which depends on 2 variables.

 \begin{figure}[H] 
\begin{center}
\hspace*{-0.15cm}
\includegraphics[width=0.5\textwidth]{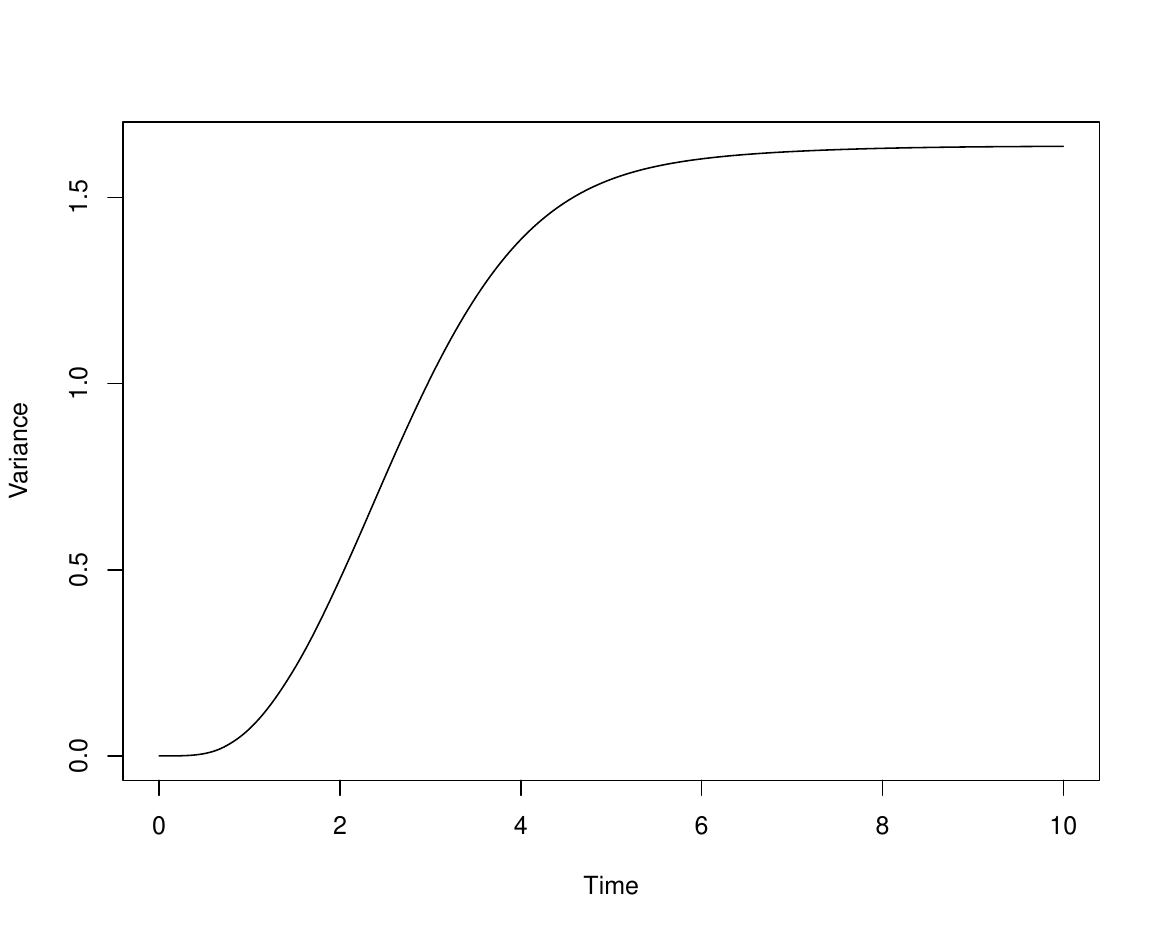}
\hspace*{-0.25cm}
\includegraphics[width=0.5\textwidth]{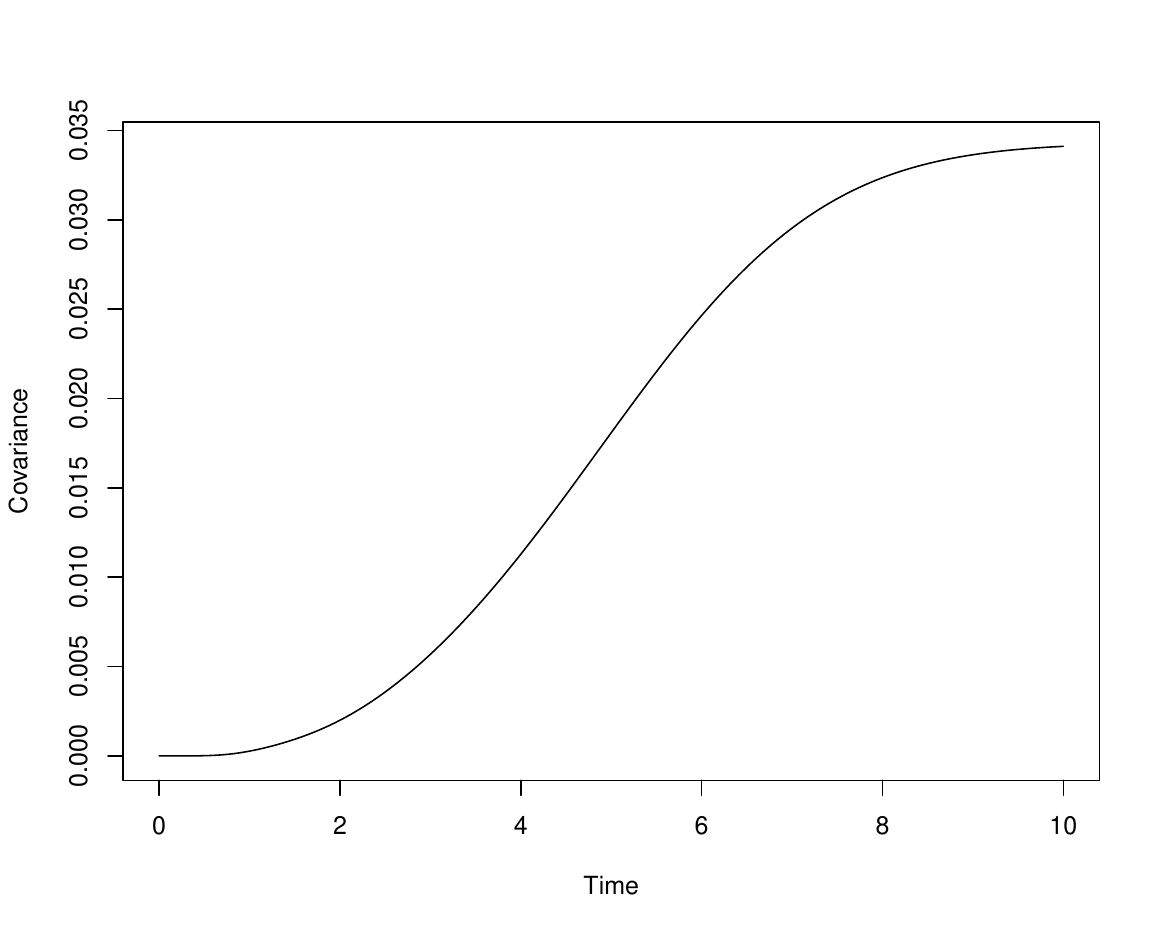}
\caption{ Example of basic covariance structure over the time interval $[0,10]$, for $j'=1$.
Left panel: Variance function  $t\mapsto \tilde{C}^{j'}_{1,1} (t,t)$. 
Right panel: Covariance function $t\mapsto \tilde{C}^{j'}_{1,2} (t,t)$.
}  \label{fig5}
 \end{center}
\end{figure}

\begin{figure}[H] 
\begin{center}
\hspace*{-0.15cm}
\includegraphics[width=0.5\textwidth]{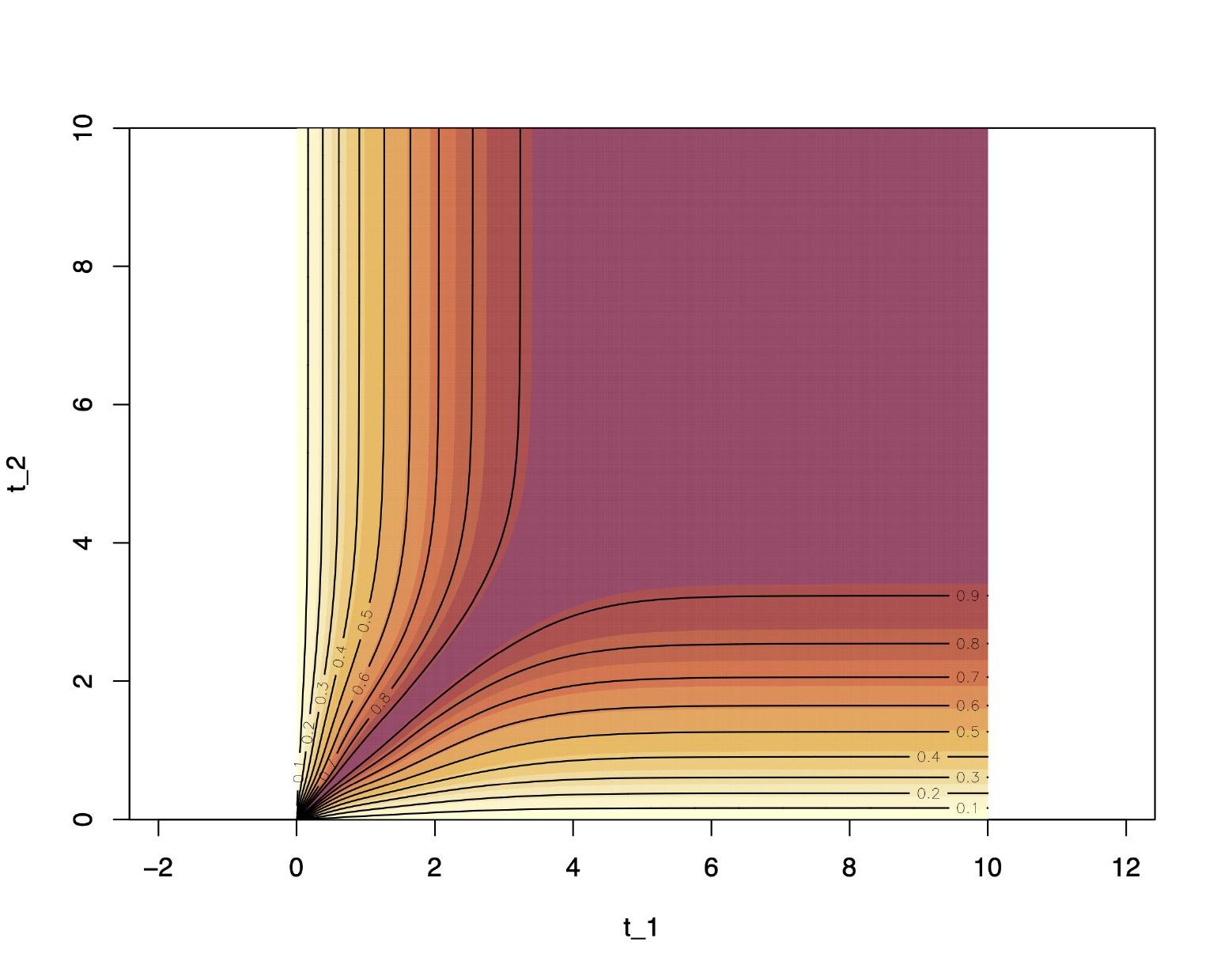}
\hspace*{-0.25cm}
\includegraphics[width=0.5\textwidth]{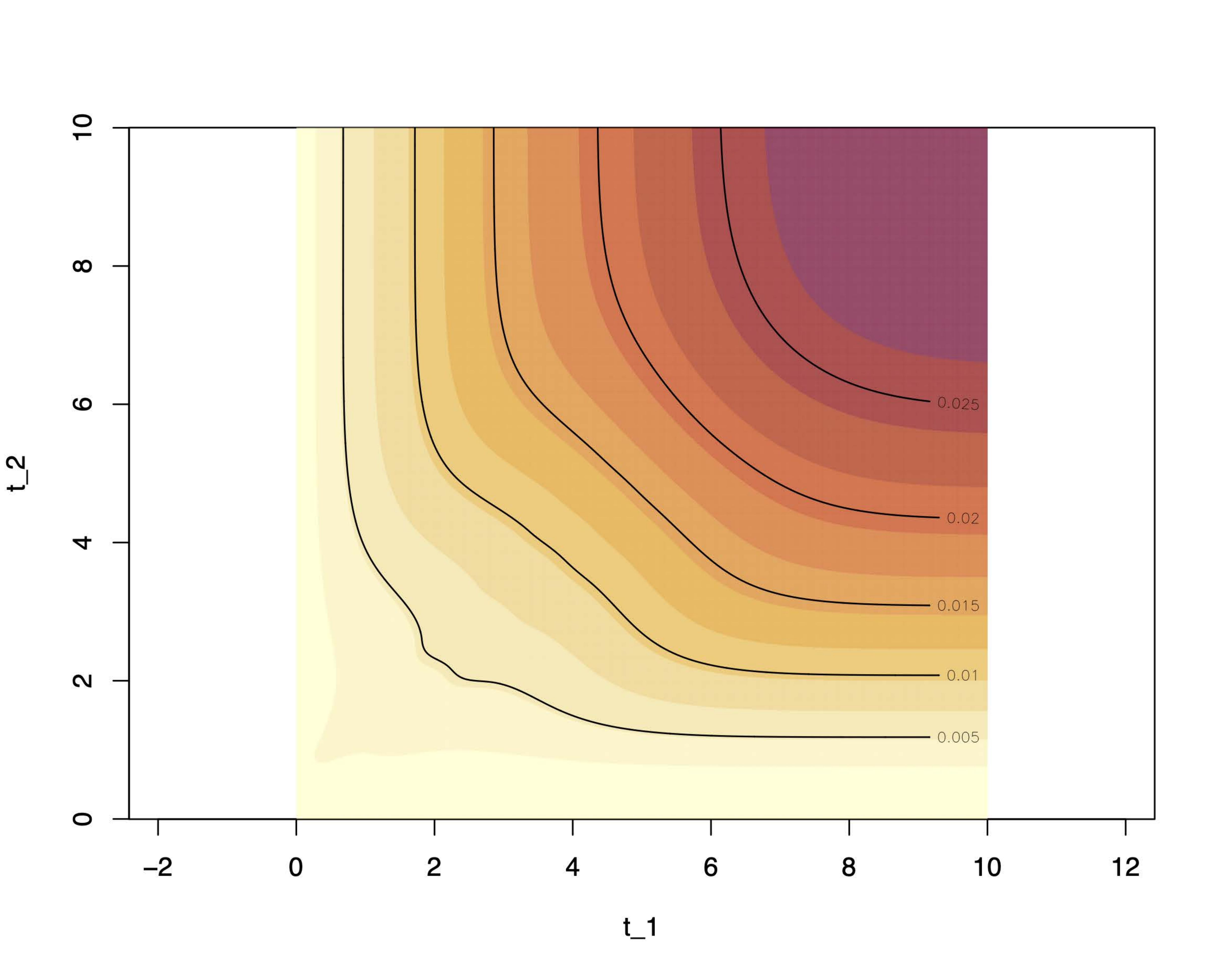}
\caption{ Continued example. Basic correlation structure of a point process driven by $(\phi^1_\bu, \phi)$.
Left panel:  Correlation  within the region $j'=1$ obtained by standardization of $\tilde{C}^{1}_{1,1} (t_1,t_2)$, $0\leq t_1,t_2 \leq 10$.
Right panel: Correlation  between  regions $j'=1$ and $j=2$ obtained by  standardization of $\tilde{C}^{1}_{1,2} (t_1,t_2)$, $0\leq t_1,t_2 \leq 10$.
}  \label{fig6}
 \end{center}
\end{figure}

 Figure \ref{fig7} shows similar graphs related to the original Hawkes process (i.e., when the ignition/immigration process is accounted for). The effect of the periodic trend of 
 $\lambda^0$ can be slightly observed through the inner-region  correlation structure (left; see the subtle fluctuations of the isolines). The inter-regions covariance structure is displayed in the right panel.

 \begin{figure}[H]
\begin{center}
\includegraphics[width=0.45\textwidth]{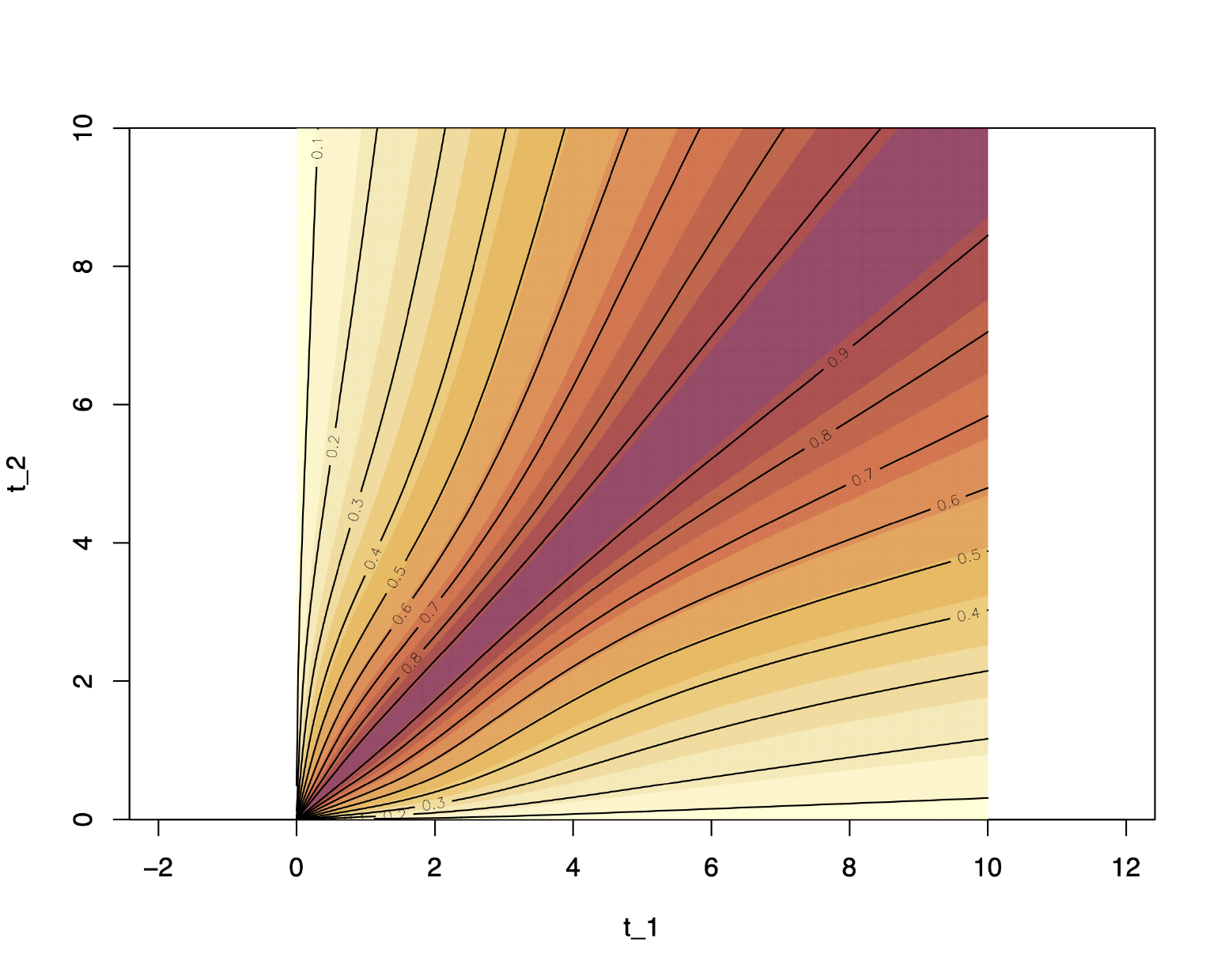}
\includegraphics[width=0.45\textwidth]{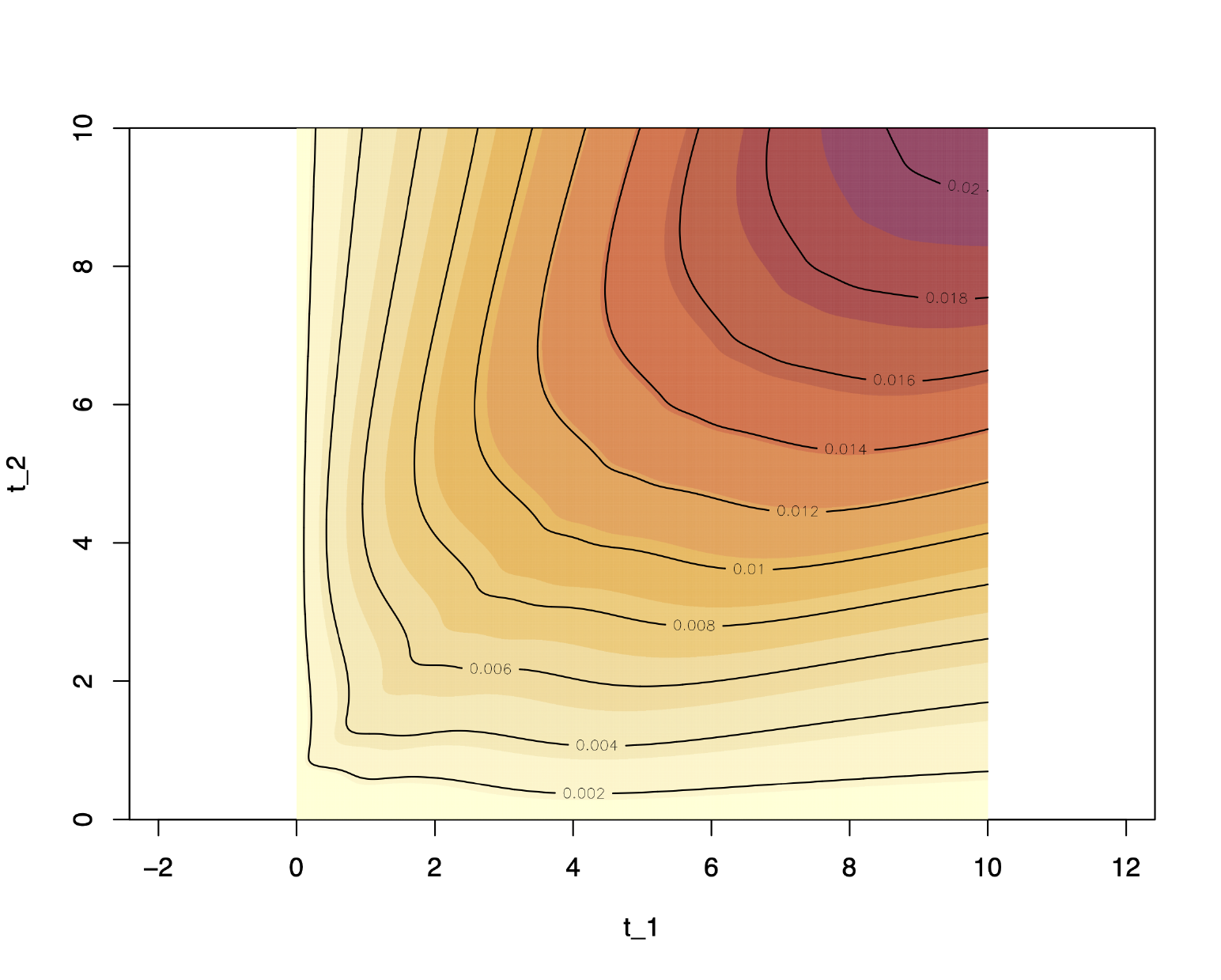}
\caption{ Correlation structures  of the Hawkes process driven by $(\lambda^0, \phi)$, with periodic $\lambda^0$.
Left panel:  Correlation  within the region $j'=2$, obtained from 
$\tilde{C}^{0}_{2,2} (t_1,t_2)$, $0\leq t_1, t_2 \leq 10$.
Right panel: Correlation  between  regions $j'=1$ and $j=2$ obtained from 
$\tilde{C}^{0}_{2,1} (t_1,t_2)$ , $0\leq t_1, t_2 \leq 10$.
}   \label{fig7}
 \end{center}
\end{figure}

\section{Trajectories of Multivariate Hawkes Processes}\label{sec:simul}

In this section, we present simulated trajectories for different multivariate Hawkes processes in order to illustrate the various behaviors that the Hawkes Process can embrace. 

First, we look at the effect of the excitation function by considering exponential, gamma, constant and beta excitation functions with constant baseline intensity.

\paragraph{Case 1: Exponential Excitation Function with Constant Baseline Intensity.}

In this case, the excitation function is exponential, and the baseline intensity is constant. The intensity function $\lambda(t)$ at time $t$ is given by:
\[
\lambda(t) = \mu + \sum_{i=1}^n \alpha_i e^{-\beta (t - t_i)^+}
\]
where $\mu$ is the constant baseline intensity, $\alpha_i$ is the excitation parameter for the $i$-th process, $\beta$ is the decay rate of the exponential excitation, and $(t - t_i)^+$ denotes the positive part of $(t - t_i)$, which is zero for $t < t_i$ and $t - t_i$ for $t \geq t_i$.
Figure \ref{fig:expo} displays two contrasted realizations of trajectories for this setup. The $3$-dimensional counting process $N$ is displayed as well as the evolution of the $3$-dimensional intensity function.

\begin{figure}[h!]
\small
\texttt{Realization 1}\vspace*{-0.3cm}
    \begin{center}
        \includegraphics[width=0.45\textwidth]{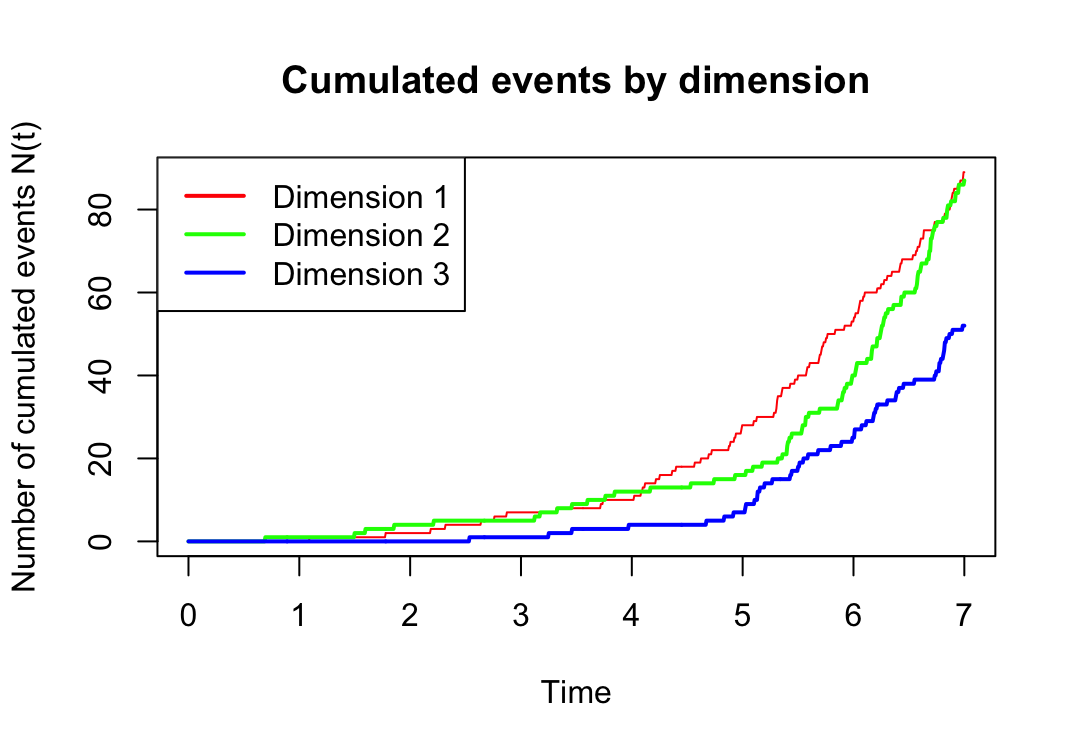}
    \includegraphics[width=0.45\textwidth]{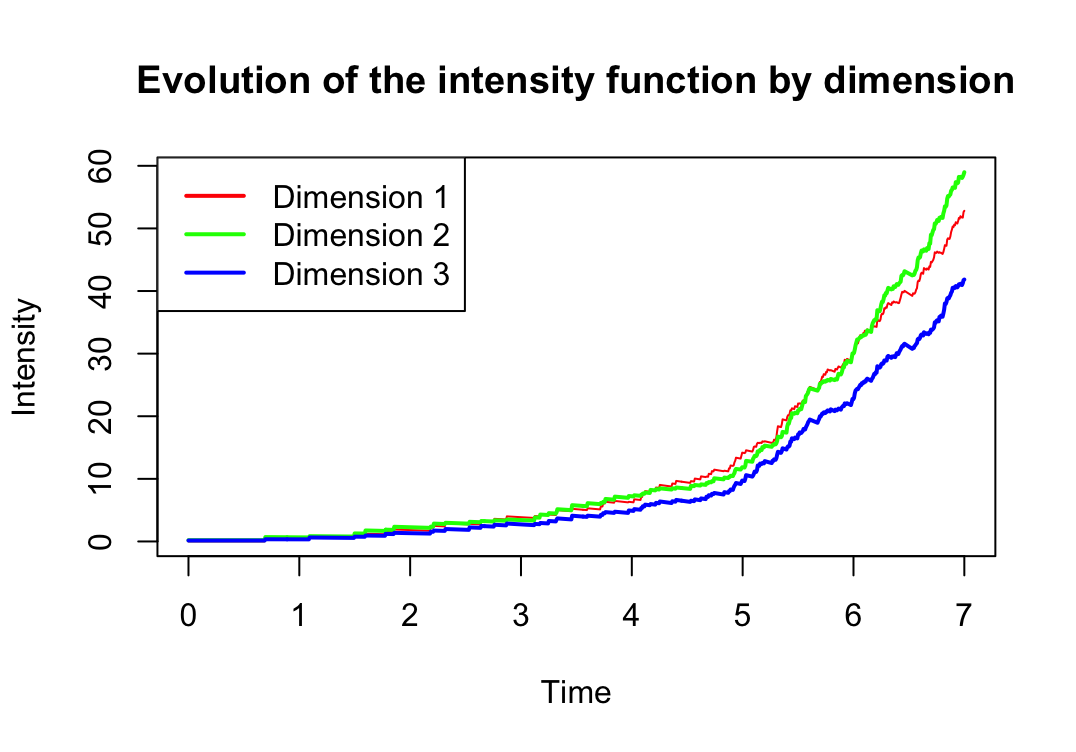}
\end{center}    \vspace*{-0.3cm}
\texttt{Realization 2}\vspace*{-0.3cm}
    \begin{center}
        \includegraphics[width=0.45\textwidth]{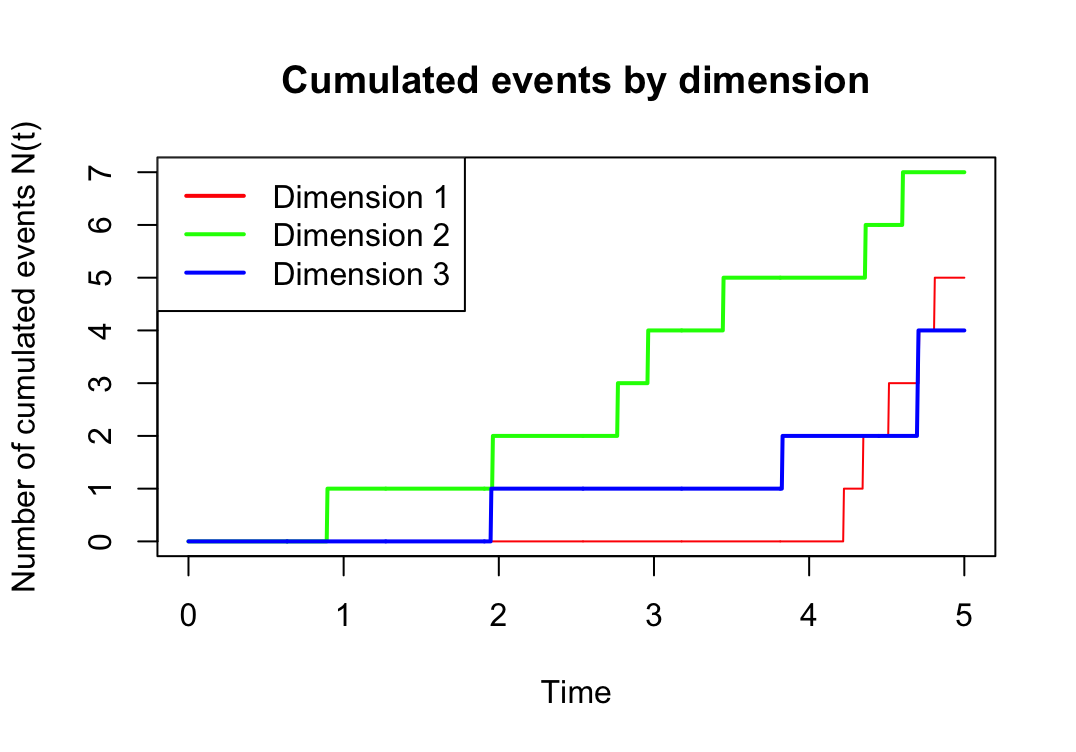}
    \includegraphics[width=0.45\textwidth]{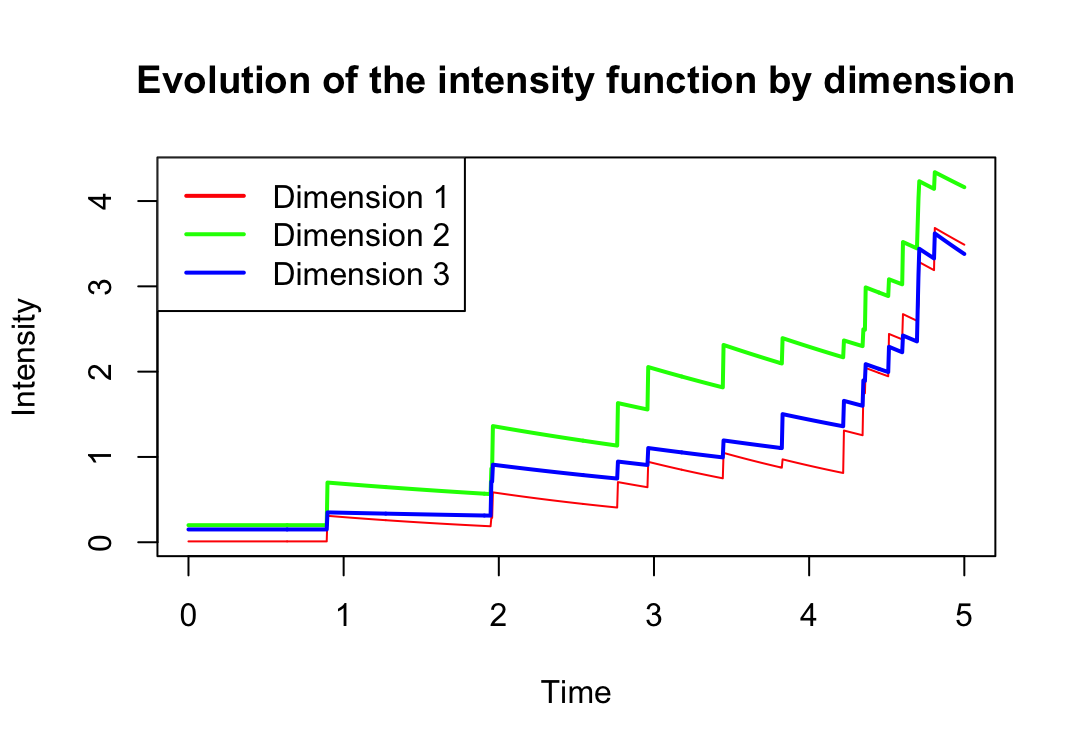}
\end{center}    \vspace*{-0.3cm}
    \caption{Trajectory of the counting process and evolution of the intensity function for the exponential excitation function with a constant baseline intensity (Case 1).}
    \label{fig:expo}
\end{figure}

\paragraph{Case 2: Gamma Excitation Function with Constant Baseline Intensity.}

In this case, the excitation function corresponds to a gamma distribution, and the baseline intensity remains constant. The intensity function $\lambda(t)$ is given by:
\[
\lambda(t) = \mu + \sum_{i=1}^n \alpha_i \left(\frac{(t - t_i)^{\gamma - 1}}{\Gamma(\gamma)}\right) e^{-\delta (t - t_i)^+}
\]
where $\mu$ is the baseline intensity, $\alpha_i$ is the excitation parameter for the $i$-th process, $\gamma$ is the shape parameter of the gamma distribution, $\delta$ is the rate parameter, and $(t - t_i)^+$ is the positive part function as described previously.
An example of trajectories is shown in Figure \ref{fig:gamma}.

\begin{figure}[h!]
    \centering
        \includegraphics[width=0.45\textwidth]{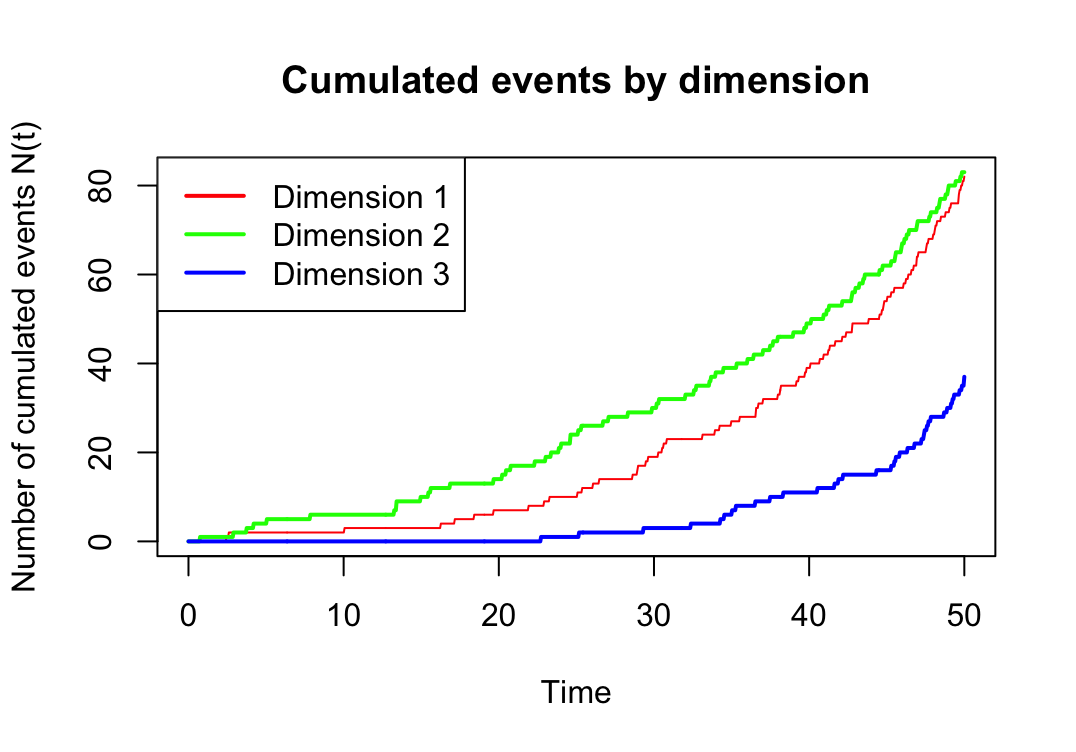}
        \includegraphics[width=0.45\textwidth]{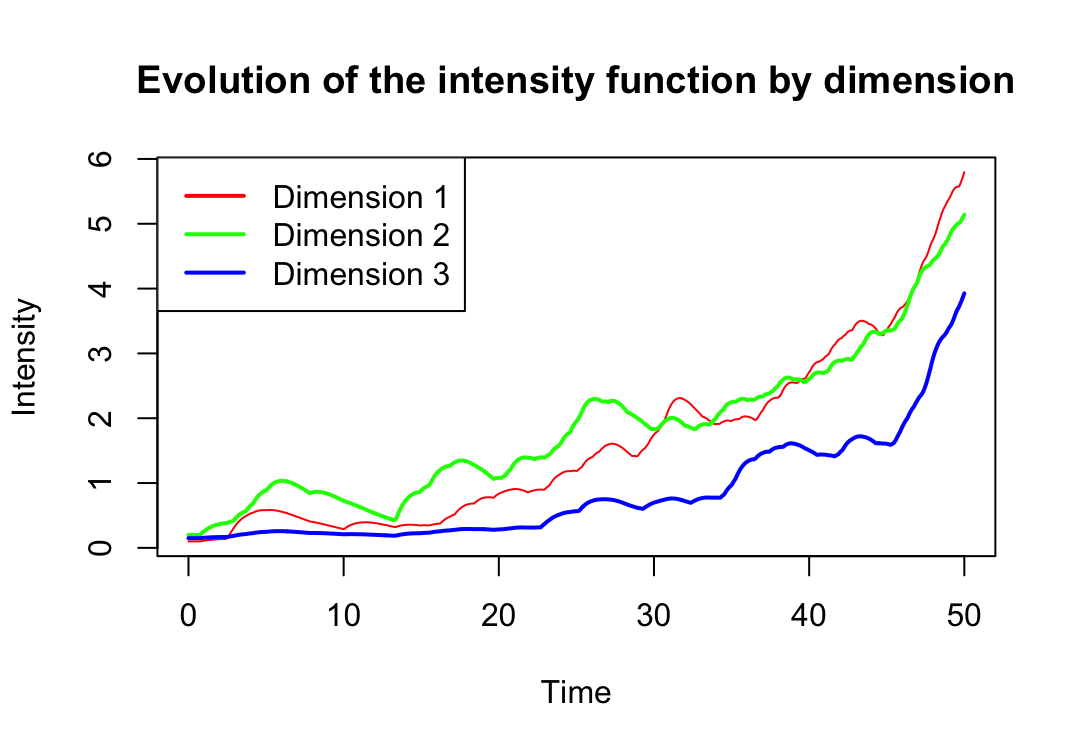}
    \caption{Trajectory of the counting process and evolution of the intensity function for the gamma excitation function with a constant baseline intensity (Case 2).}
    \label{fig:gamma}
\end{figure}

\paragraph{Case 3: Constant Excitation Function with Constant Baseline Intensity.}

In this case, the excitation function is constant, and the baseline intensity is also constant. The intensity function $\lambda(t)$ is given by:
\[
\lambda(t) = \mu + \sum_{i=1}^n \alpha_i
\]
where $\mu$ is the constant baseline intensity, and $\alpha_i$ is the constant excitation parameter for the $i$-th process.
Figure \ref{fig:const} gives a realization of trajectories in this case.

\begin{figure}[h!]
    \centering
        \includegraphics[width=0.45\textwidth]{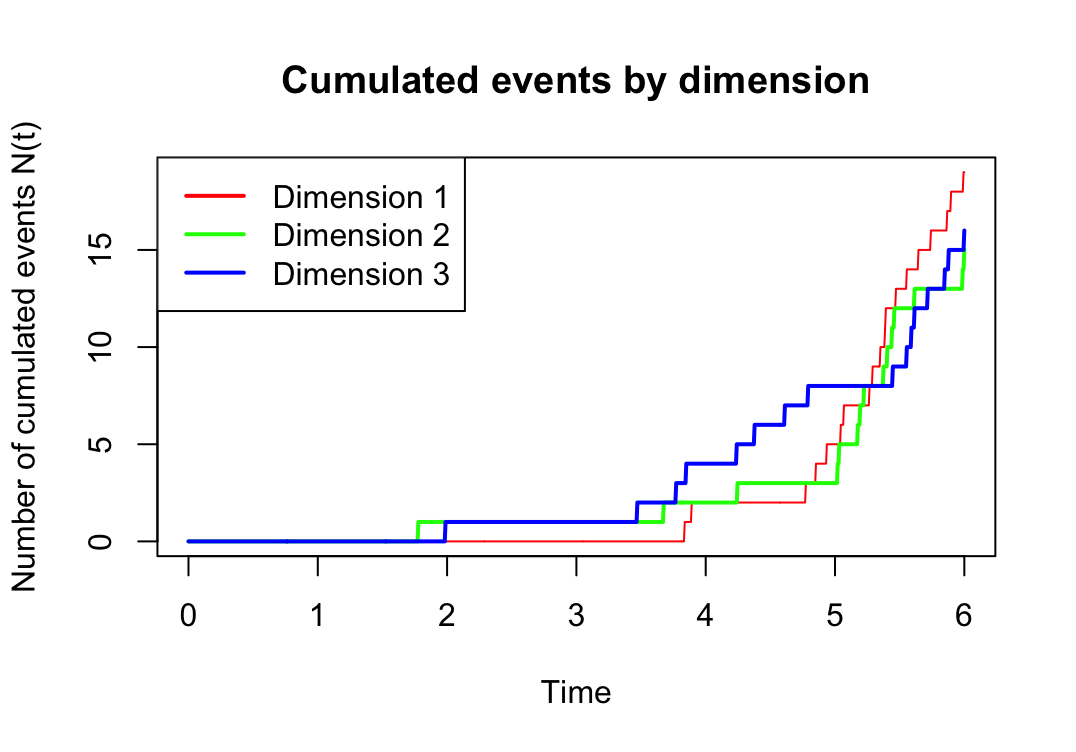}
        \includegraphics[width=0.45\textwidth]{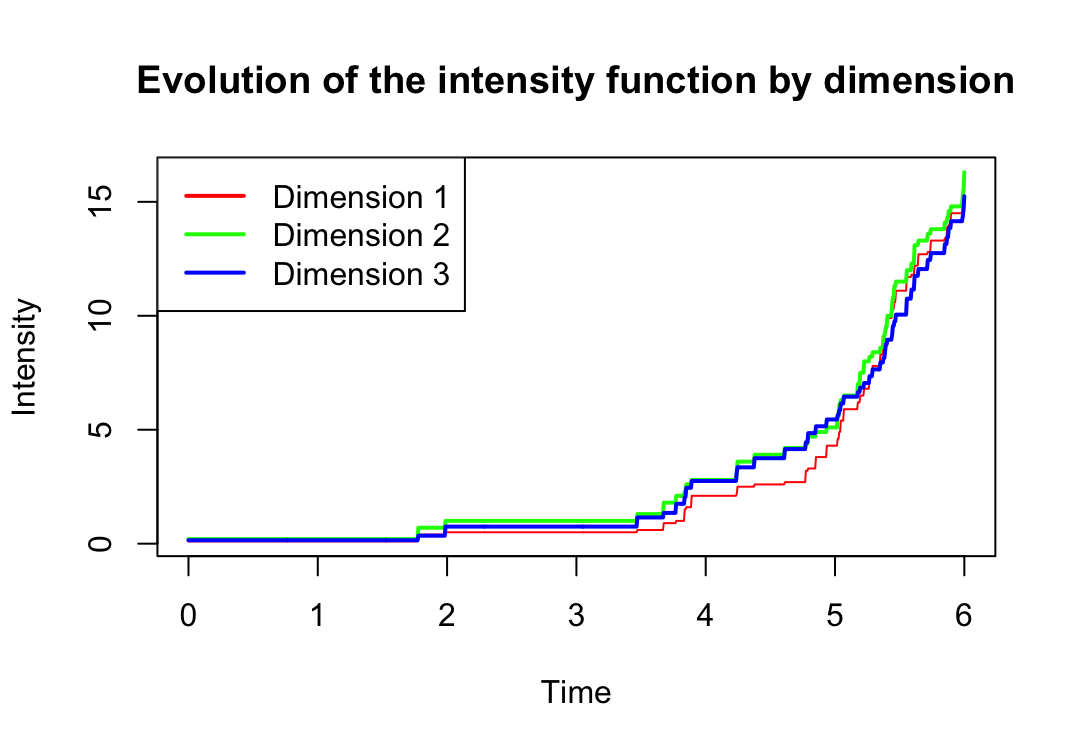}
    \caption{Trajectory of the counting process and evolution of the intensity function for the constant excitation function with a constant baseline intensity (Case 3).}
    \label{fig:const}
\end{figure}

\paragraph{Case 4: Beta Excitation Function with Constant Baseline Intensity.}

In this case, the excitation function corresponds to a Beta distribution defined on the interval $[0, 1]$ with two shape parameters labeled $\alpha$ and $\beta$ such that the probability density function of the Beta distribution is given by:
\[
f(x; \alpha, \beta) = \frac{x^{\alpha-1} (1 - x)^{\beta-1}}{B(\alpha, \beta)},
\]
where $B(\alpha, \beta)$ is the Beta function, defined as:
\[
B(\alpha, \beta) = \int_0^1 x^{\alpha-1} (1-x)^{\beta-1} dx .
\]
The intensity function $\lambda(t)$ in this case is given by:
\[
\lambda(t) = \mu + \sum_{i=1}^n \alpha_i \left(\frac{(t - t_i)}{(t - t_i + \delta)}\right)^{\gamma - 1} \left(1 - \frac{(t - t_i)}{(t - t_i + \delta)}\right)^{\beta - 1} ,
\]
where $\mu$ is the baseline intensity, $\alpha_i$ is the excitation parameter for the $i$-th process, and the term involving $(t - t_i)$ represents the Beta-shaped excitation function.
Figure \ref{fig:beta} shows a realization of the trajectory of the multivariate Hawkes Process in this case.

\begin{figure}[h!]
    \centering
        \includegraphics[width=0.45\textwidth]{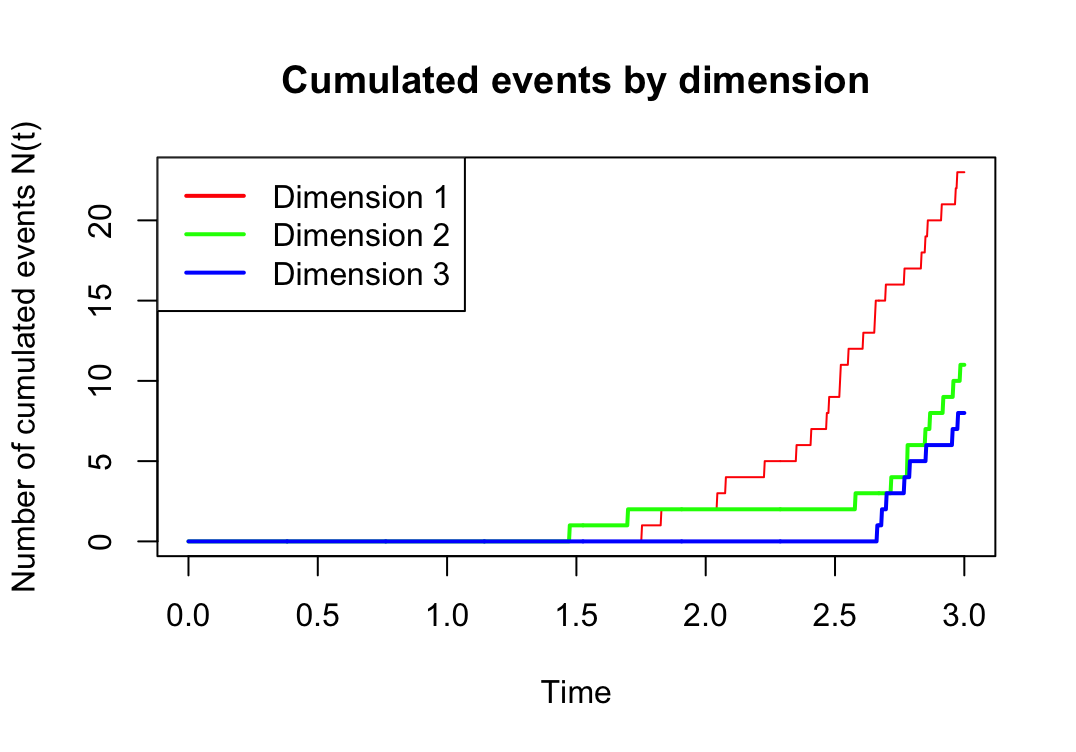}
        \includegraphics[width=0.45\textwidth]{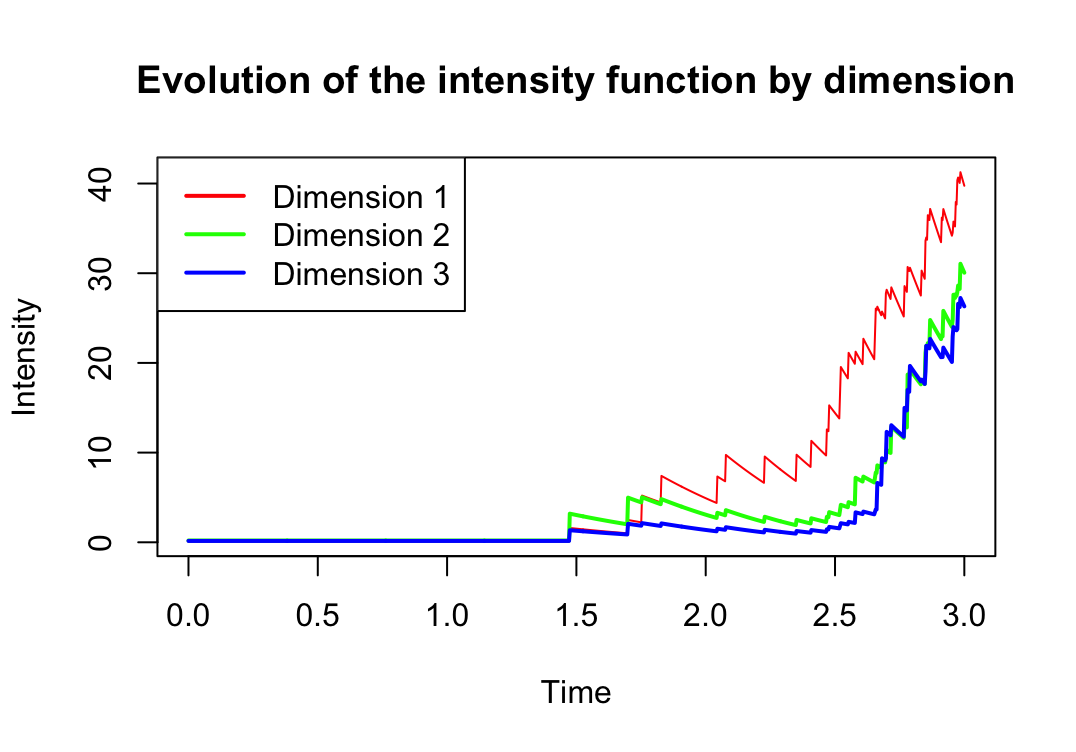}
    \caption{Trajectory of the counting process and evolution of the intensity function for the Beta excitation function with a constant baseline intensity (Case 4).}
    \label{fig:beta}
\end{figure}

Second, we simulated 4-dimensional realizations of the Hawkes process for varying interaction matrices between four entities in order to illustrate the effect of the interactions on the trajectory of the multi-dimensional counting process. We considered an intensity function with the form
$\lambda(t) = \mu + W \mathrm{diag}(\phi)$ with exponential $(\phi_i^i)_{i=1,\ldots,d}$. 
Figure \ref{fig:interaction} shows different behaviors obtained for various $W$ matrices and various ordering in parameter values. We clearly observe contrasted curve shapes consistent with input parameters.

\begin{figure}[h!]
    \begin{align*}
 W&=\left(
    \begin{array}{cccc}
    {\textcolor{red}{a}} & {\textcolor{red}{a}} & 0 & 0 \\
    {\textcolor{olive}{a}} & {\textcolor{olive}{a}} & 0 & 0 \\
    0 & 0 & {\textcolor{cyan}{b}} & 0 \\
    \tiny{0} & 0 & 0 & {\textcolor{violet}{b}} \\
    \end{array}\right) \hspace*{7cm}\\
\mu&=(c,c,c,c)\\
b&<a
\end{align*}
\vspace*{-4cm} \\ \hspace*{7cm}        \includegraphics[width=0.45\linewidth]{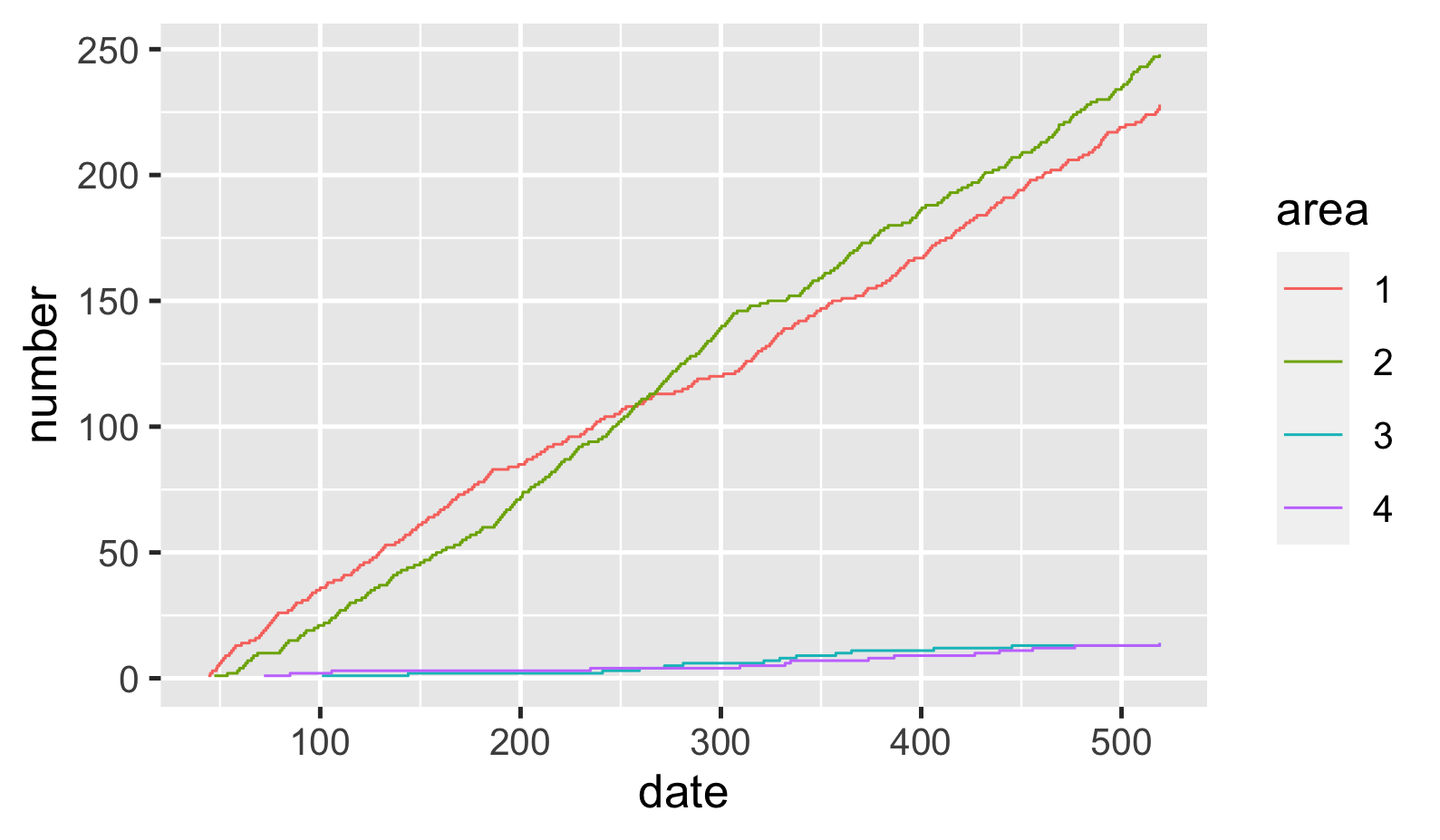}\\ \ \\
\hspace*{1cm}  \includegraphics[width=0.45\linewidth]{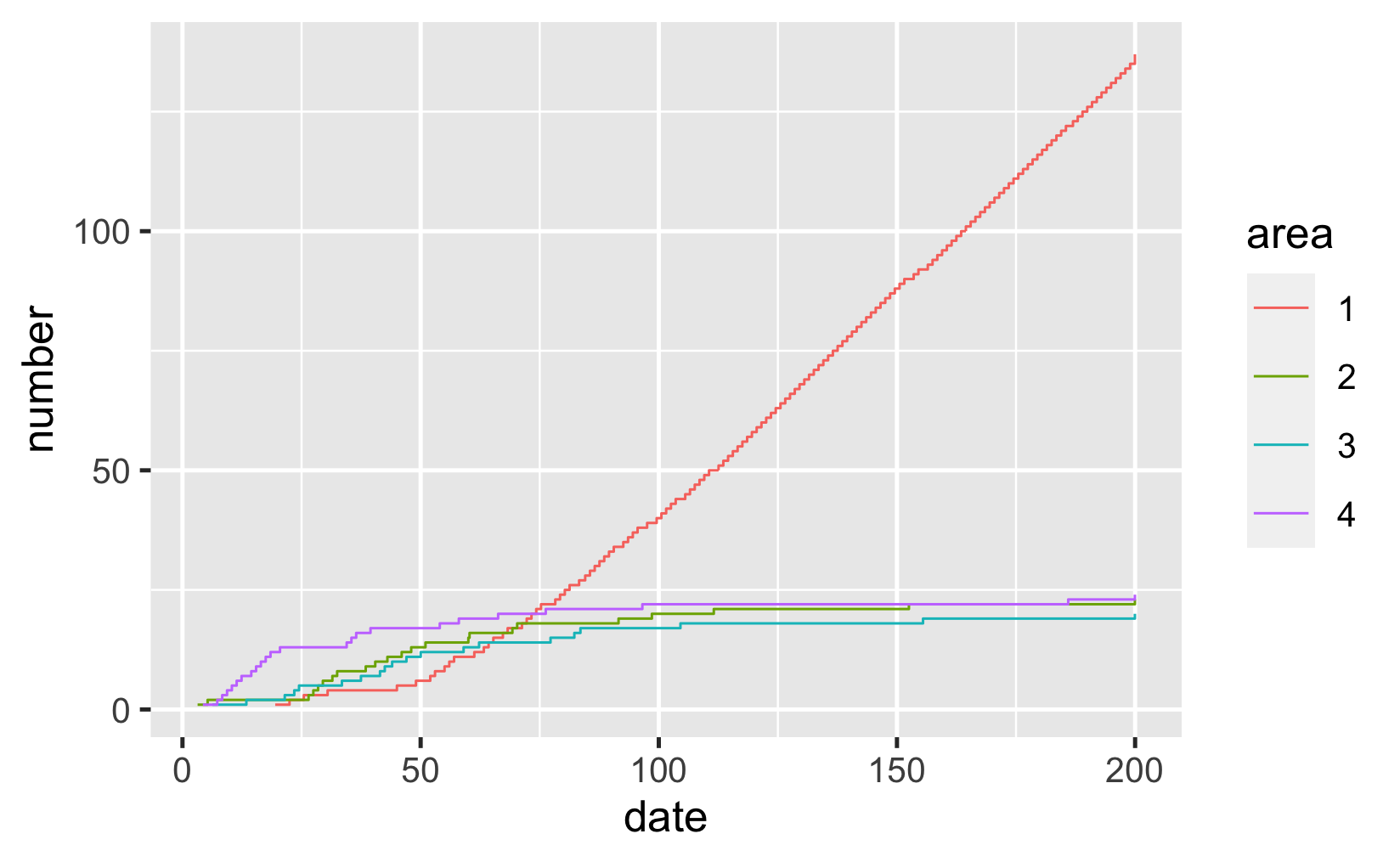}\\
\vspace*{-5cm} \\ 
 \begin{align*}
 \hspace*{7cm}   W&=\left(
    \begin{array}{cccc}
    {\textcolor{red}{a}} & {\textcolor{red}{a}} & {\textcolor{red}{a}} & {\textcolor{red}{a}} \\
    0 & {\textcolor{olive}{b}} & 0 & 0 \\
    0 & 0 & {\textcolor{cyan}{b}} & 0 \\
    0 & 0 & 0 & {{\textcolor{violet}{b}}} \\
    \end{array}\right)\\
\mu&=(c,C,C,C)\\
b&<a\;\; ; \;\;c<C
\end{align*}\\
\begin{align*}
W&=\left(
    \begin{array}{cccc}
    {\textcolor{red}{a}} & {\textcolor{red}{a}} & {\textcolor{red}{a}} & {\textcolor{red}{a}} \\
    0 & {\textcolor{olive}{b}} & 0 & 0 \\
    0 & 0 & {\textcolor{cyan}{b}} & 0 \\
    0 & 0 & {{\textcolor{violet}{b}}} & {{\textcolor{violet}{b}}} \\
    \end{array}\right)\hspace*{7cm}\\
\mu&=(c,C,C,C)\\
b&<a  \;\; ; \;\; c<C
\end{align*}
\vspace*{-4cm} \\ \hspace*{7cm}   
\includegraphics[width=0.45\linewidth]{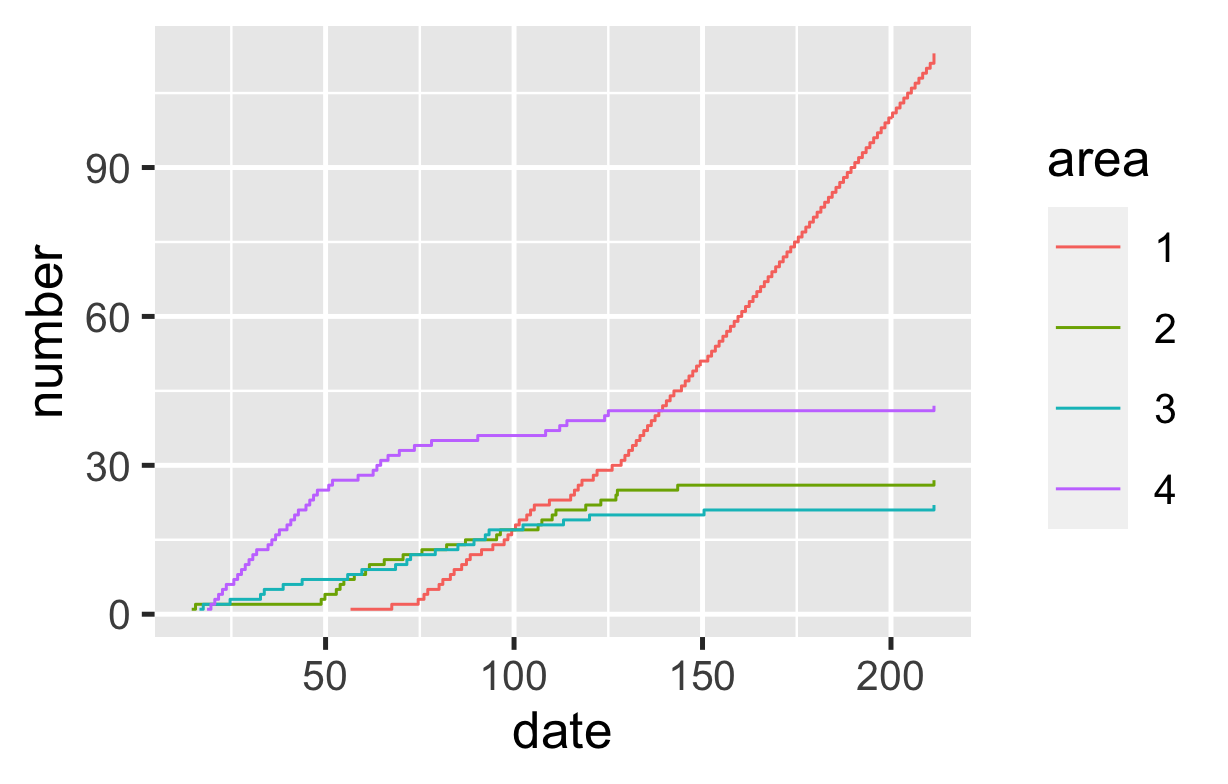} \\ \ \\
\hspace*{1cm}  \includegraphics[width=0.45\linewidth]{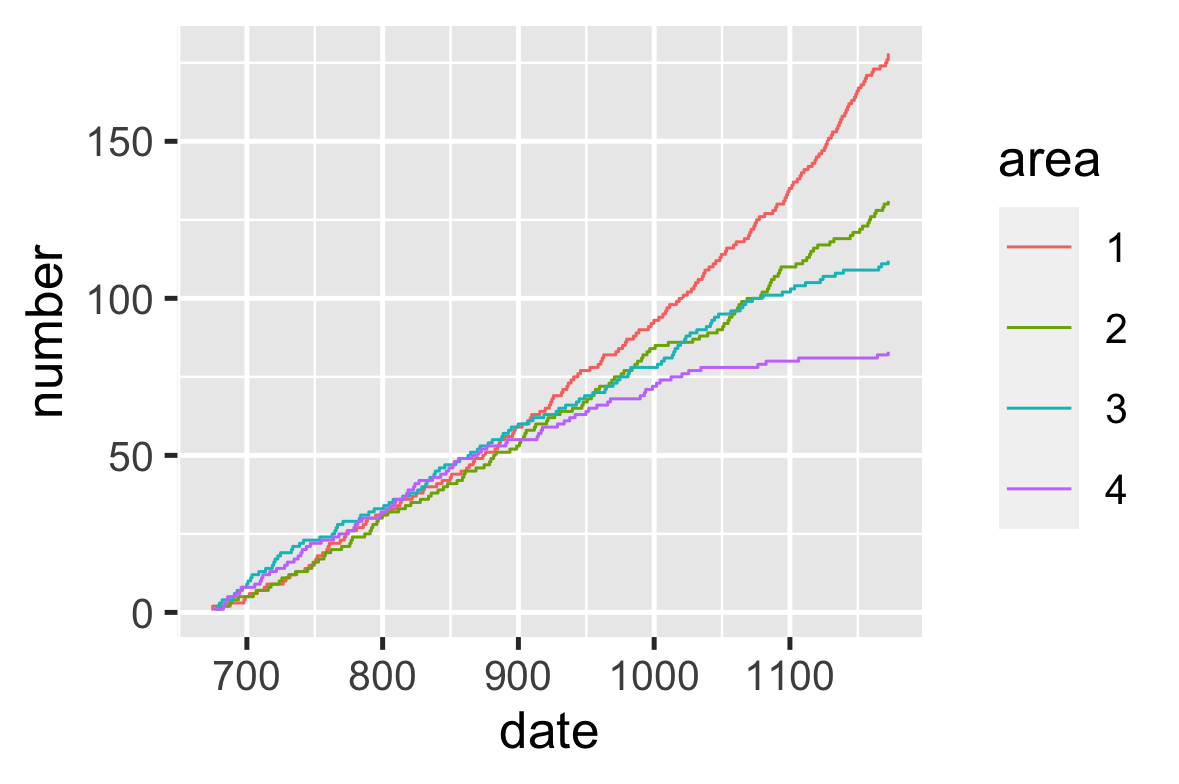}
 \\
\vspace*{-5cm} \\ 
 \begin{align*}
 \hspace*{7cm} 
W&=\left(
    \begin{array}{cccc}
    {\textcolor{red}{a}} & {\textcolor{red}{a}} & {\textcolor{red}{a}} & {\textcolor{red}{a}} \\
    0 & {\textcolor{olive}{b}} & {\textcolor{olive}{b}} & 0 \\
    0 & {\textcolor{cyan}{b}} & {\textcolor{cyan}{b}} & 0\\
    0 & 0 & 0 & {{\textcolor{violet}{b}}} \\
    \end{array}\right)\\
\mu&=(c,c,c,c)\\
b&<a 
\end{align*}
    \caption{Trajectory of the counting process for different interaction matrices and parameter ordering.}
    \label{fig:interaction}
\end{figure}

\section{Conclusion}\label{conclusion}

In this work, we proposed and characterized a self-exciting point process taking the form of a Hawkes process defined in a spatial context. The space may correspond to distinct geographic regions or any other entities, whose connections drive how much the process that is going on in a given entity generates offspring points in any other entity. Hence, this process may be applied to represent self- and inter-exciting dynamics at the nodes of any weighted-oriented network like those referred in \cite{choufany2021,richard2023}. 

In this article, we detailed the precise form of the infinitely divisible property associated with such a Hawkes process. We computed its multi-dimensional and multi-temporal characteristic function, which allows a thorough comprehension of the process dynamics across both time and space. The generalized Laplace Transform of the process was also described as well as its first two moment functions (mean and covariance structures). We were able to obtain closed-form formula for the characteristic and moment functions, and we proposed a numerical scheme for solving the equation that we obtained. 

As pointed out in the introduction, the main results in our article are the formulas concerning the covariance structure of the process and more generally the formulas that concern multiple times for multivariate, non-stationary Hawkes processes. These formulas and all the accompanying material provided in this article will allow, in further studies, the development of estimators for the parameters of our process and the description of its behavior. 

In terms of estimation, our spatial Hawkes process could be fitted to data collected during epidemics such as those caused by the phytopathogenic bacterium {\it Xylella fastidiosa}. The spatial Hawkes process that we proposed could allow us to draw inferences not simply based on a deterministic propagation model with a unique introduction of the disease (such assumptions were made in \cite{abboud2019,abboud2023}), but based on a stochastic model with multiple introductions driven by the baseline intensity function $\lambda^0$, as suggested by \cite{abboud2018}. The most trivial ways to make estimation for our spatial Hawkes process may consist in using either the moment functions at multiple times or the probability distribution of counts in the framework of the minimum contrast method \cite{dacunha1982,soubeyrand2009}. 

In terms of description of the model behavior, several objectives may be considered. Suppose that the model is used to describe epidemics in multiple regions, then properties linking the baseline intensity $\lambda^0$  with the overall epidemic size (typically the cumulative number of points at a given time) may be derived to select the $d'<d$ regions in which the baseline intensity should be reduced to minimize the overall epidemic size, given $d'$ and the reduction factor. Similarly, properties linking inter-region migration terms in $\phi$ with the overall epidemic size may be derived to select the $d'<d(d-1)$ migration terms should be reduced to minimize the overall epidemic size, given $d'$ and the reduction factor. Beyond the control of the epidemic size, another interesting issue concerns the surveillance of the epidemics, with two typical objectives: early detection \cite{martinetti2019} and disease delimitation \cite{charras2013}. For example, further studies may focus on deriving the properties of the model allowing us to determine how the sampling strategies for early detection or disease delimitation should be adapted to the baseline intensity function $\lambda^0$ or the excitation function $\phi$ (including the migration terms).

Finally, it would be interesting to explore how the approach and tools proposed here could be mobilized to further characterize multidimensional self-exciting processes with dependencies (MSPD), recently introduced by \citep{hillairet2025}. MSPDs generalize Hawkes processes by allowing the excitation function to depend on a mark associated with each point event. In an epidemiological context, such dependencies could reflect the influence of pathogen variants with differing transmission profiles or of infected hosts exhibiting heterogeneous contact patterns ---including the effect of super-spreaders.

\

\paragraph{Acknowledgements.}

This work was supported by an `INRAE MathNum -- R\'egion PACA' PhD grant, the BEYOND Project funded by  ANR (grant ANR-20-PCPA-0002) and the BeXyl Project funded by the HORIZON.2.6 programme (grant 101060593).

\bibliography{biblio.bib}
\bibliographystyle{amsplain}

\end{document}